\newcommand{\cal}{\mathcal}
\newcommand{\rd}{{\mathbb R}^d}
\newcommand{\R}{{\mathbb R}}
\newcommand{\N}{{\mathbb N}}
\newcommand{\exo}{{\rm exo}}
\newcommand{\Unp}{{\rm Unp}}
\newcommand{\piAB}{\pi_A^{-1}(B)}
\newcommand{\piAC}{\pi_A^{-1}(C)}
\newcommand{\spt}{{\rm spt}\,}
\newcommand{\Ha}{{\cal H}}
\newcommand{\bd}{\partial}
\newcommand{\cl}[1]{\overline{#1}}
\newcommand{\ind}[1]{\mathbf{1}_{#1}}
\newcommand{\eps}{\varepsilon}
\newcommand{\sB}{\cal{B}}
\newcommand{\sC}{\cal{C}}
\newcommand{\sA}{\cal{A}}
\newcommand{\sS}{\cal{S}}
\newcommand{\sM}{\cal{M}}
\newcommand{\usS}{\overline{\cal S}}
\newcommand{\usM}{\overline{\cal M}}
\newcommand{\lsS}{\underline{\cal S}}
\newcommand{\lsM}{\underline{\cal M}}
\newcommand{\ausS}{{\overline{\widetilde{\cal S}}}}
\newcommand{\ausM}{\overline{\widetilde{\cal M}}}
\newcommand{\alsS}{\underline{\widetilde{\cal S}}}
\newcommand{\alsM}{\underline{\widetilde{\cal M}}}
\newcommand{\asS}{\widetilde{\cal{S}}}
\newcommand{\asM}{\widetilde{\cal{M}}}
\newcommand{\ldim}{\underline{\dim}}
\newcommand{\udim}{\overline{\dim}}
\newcommand{\se}{\searrow}
\newcommand{\vlim}{\stackrel{v}{\longrightarrow}}
\newcommand{\wlim}{\stackrel{w}{\longrightarrow}}
\newtheorem{Theorem}{Theorem}
\newtheorem{Proposition}[Theorem]{Proposition}
\newtheorem{Lemma}[Theorem]{Lemma}
\newtheorem{Corollary}[Theorem]{Corollary}
\newtheorem{example}[Theorem]{Example}
\newtheorem{Remark}[Theorem]{Remark}
\numberwithin{equation}{section}
\numberwithin{Theorem}{section}
\begin{document}
\title[Local Minkowski measurability]{Localization results for Minkowski contents}
\author{Steffen Winter}
\address{Karlsruhe Institute of Technology, Department of Mathematics, 76128 Karlsruhe, Germany}
\date{\today}
\subjclass[2000]{28A75, 28A80, 51M25}
\keywords{parallel set, surface area, Minkowski content, Minkowski dimension, Kneser function, metrically associated, S-content, weak convergence, vague convergence, fractals, curvature measures}
\begin{abstract}
It was shown recently that the Minkowski content
of a bounded set $A$ in $\R^d$ with volume zero can be characterized in terms of
the asymptotic behaviour of the boundary surface area of its parallel sets $A_r$ as the parallel radius $r$ tends to $0$.
Here we discuss localizations of such results. The asymptotic behaviour of the local parallel volume of $A$ relative to a suitable second set $\Omega$ can be understood in terms of the suitably defined local surface area relative to $\Omega$. Also a measure version of this relation is shown: Viewing the Minkowski content
as a locally determined measure, this measure can be obtained as a weak limit of suitably rescaled surface measures of close parallel sets. Such measure relations had been
observed before for self-similar sets and some self-conformal sets in $\R^d$. They are now established for arbitrary closed sets, including even the case of unbounded sets.

The results are based on a localization of Stach\'o's famous formula relating the boundary surface area of $A_r$ to the derivative of the volume function at $r$.
\end{abstract}
\maketitle

\section{Introduction}

Let $A$ be a bounded subset of $\rd$ and $r>0$. Denote by $d(x,A)$ the (Euclidean) distance between $A$ and a point $x\in\R^d$, and by
$$
A_r:=\{ z\in\rd:\, d(z,A)< r\}
$$
 the \emph{open $r$-parallel set} (or open $r$-neighbourhood) of $A$.
 Let $V_A(r):=\lambda_d(A_r)$ be the volume of $A_r$.
Kneser \cite{Kneser} observed that the volume function $r\mapsto V_A(r)$, $r>0$ satisfies a growth estimate which is nowadays called the Kneser property, see \eqref{eq:kneser}. Stach\'o \cite{Stacho} used this to show that for all $r>0$
the left and right derivatives $(V_A)'_-(r)$ and $(V_A)'_+(r)$ of $V_A(r)$ exist   
and established a remarkable relation to the surface area of the boundary $\bd A_r$ of $A_r$: for any $r>0$,
\begin{align} \label{eq:deriv-vol}
    \sM^{d-1}(\partial A_{r})=\frac 12\left((V_A)_{-}'(r)+(V_A)_{+}'(r)\right).
  \end{align}
Here $\sM^{d-1}$ denotes the $(d-1)$-dimensional Minkowski content, cf.~\eqref{eq:def-Mink} for the definition.
Whenever the derivative $V_A'$ exists (which is the case for all $r>0$ except countably many), the Minkowski content can be replaced by the Hausdorff measure ${\cal H}^{d-1}$ and one has
\begin{equation} \label{vol_deriv}
{\cal H}^{d-1}(\partial A_r)=V_A'(r),
\end{equation}
see \cite{RW09}. It is obvious that parallel volume as well as boundary surface area are local concepts, i.e.\ when restricted to some open set $G$ intersecting the given parallel set $A_r$, then the volume $\lambda_d(A_r\cap G)$ as well as the surface area ${\cal H}^{d-1}(\partial A_r\cap G)$ will only depend on the local structure of $A$ within a suitable neighborhood of $G$. Certainly, points of $A$ with distance to $G$ larger than $r$, will have no influence on these quantities. It is therefore a natural question, whether also the relations in \eqref{eq:deriv-vol} and \eqref{vol_deriv} can be localized in some way. It is our first aim in this note to discuss such a localization. It is based on the concept of metrically associated sets already introduced by Stach\'o \cite{Stacho}.
Essentially we will show in Section 2, that the relations  \eqref{eq:deriv-vol} and \eqref{vol_deriv}  localize when we restrict parallel volume and boundary surface area of a set $A$ to sets that are metrically associated with $A$. These new local relations do even make sense for unbounded sets $A$. Our considerations have partially been motivated by some questions arising in the study of local Minkowski contents to be discussed in a moment.

In \cite{RW09} and some subsequent papers \cite{HR12,RW13,W11-1} the limiting behaviour as $r\searrow 0$ of volume $V_A(r)$ and boundary surface area ${\cal H}^{d-1}(\partial A_r)$ was studied for arbitrary bounded sets $A\subset \R^d$
 and based on the above mentioned equations \eqref{eq:deriv-vol} and \eqref{vol_deriv} some close relations have been established between the resulting notions of Minkowski content and S-content. We briefly recall their definitions.

For $s\geq 0$, the \emph{$s$-dimensional lower and upper Minkowski content} of a compact set $A\subset\R^d$ are the numbers \begin{align}\label{eq:def-Mink}
\underline{\cal M}^s(A):=\liminf_{r\to 0} \frac{V_A(r)}{\kappa_{d-s}r^{d-s}} \quad \text{ and } \quad
\overline{\cal M}^s(A):=\limsup_{r\to 0} \frac{V_A(r)}{\kappa_{d-s}r^{d-s}},
\end{align}
respectively, where $\kappa_t:=\pi^{t/2}/\Gamma(1+\frac t2)$. (If $t$ is an integer, $\kappa_t$ is the volume of the $t$-dimensional unit ball.)  Similarly, for $0\leq s<d$, the \emph{$s$-dimensional lower and upper S-content} were introduced in \cite{RW09}, by
\[
\underline{\cal S}^s(A):=\liminf_{r\to 0} \frac{{\cal H}^{d-1}(\bd A_r)}{(d-s)\kappa_{d-s}r^{d-1-s}}
\quad \text{ and } \quad
\overline{\cal S}^s(A):=\limsup_{r\to 0} \frac{{\cal H}^{d-1}(\bd A_r)}{(d-s)\kappa_{d-s}r^{d-1-s}},
\]
respectively, and for $s=d$ by $\lsS^d(A)=\usS^d(A):=0$ (justified by the fact that $\lim_{r\to 0}r{\cal H}^{d-1}(\bd A_r)=0$ for any bounded set $A\subset\R^d$, cf.~\cite{Kneser}).
If ${\lsM}^s(A)={\usM}^s(A)$, then the common value ${\cal M}^s(A)$ is referred to as the \emph{$s$-dimensional Minkowski content} of $A$ and if ${\lsS}^s(A)={\usS}^s(A)$, then similarly the common value $\sS^s(A)$ is called \emph{$s$-dimensional S-content} of the set $A$. Further, if ${\cal M}^s(A)$ ($\sS^s(A)$) exists and is positive and finite, then the set $A$ is called \emph{$s$-dimensional Minkowski measurable} (\emph{$s$-dimensional S-measurable}, respectively).
The numbers
\[
\underline{\dim}_M A:=\inf\{t\ge 0 : \lsM^t(A)=0\}=\sup\{t\ge 0 :\underline{\cal M}^t(A)=\infty\}
\]
and
\[
\overline{\dim}_M A=\inf\{t\ge 0 :\overline{\cal M}^t(A)=0\}=\sup\{t\ge 0 :\overline{\cal M}^t(A)=\infty\}
\]
are usually called the \emph{lower} and \emph{upper Minkowski dimension} of $A$ and \emph{lower} and \emph{upper S-dimension} $\ldim_S A$ and $\udim_S A$ of $A$ are defined analogously with $\lsM^t(A)$ and $\usM^t(A)$ replaced by $\lsS^t(A)$ and $\usS^t(A)$,respectively.
Minkowski measurability plays an important role for instance in connection with the modified Weyl-Berry conjecture, see e.g.\ \cite{FGCD} and the relevant references therein, and the Minkowski content is also a popular texture index (`lacunarity') in applications characterizing the geometry of a given fractal structure beyond its fractal dimensions, see e.g. \cite{Mandelbrot94,Backes1,SSW15}.

In view of the equations \eqref{eq:deriv-vol} and \eqref{vol_deriv}, it is apparent that Minkowski contents and S-contents of a set $A$ should be closely related.
In \cite{HR12,RW13}, it was established that a bounded set $A\subset\R^d$ is Minkowski measurable if and only if it is S-measurable. More precisely, for some $0\leq D<d$, the Minkowski content $\sM^D(A)$ exists and is positive and finite  if and only if the corresponding S-content $\sS^D(A)$ exists as a positive and finite value and in this case one even has,
\begin{align}\label{eq:equal-of-contents}
\sM^D(A)=\sS^D(A).
\end{align}
The generality of this result is surprising, in view of the fact that for the corresponding lower and upper contents (i.e.\ when the existence of either the Minkowski content or the S-content is not assumed) only much weaker general relations hold: 

\begin{Theorem}\label{thm:main-rw09} \cite[Corollaries~3.2, 3.4, 3.6 and Proposition 3.7]{RW09}\\
Let $A\subset\R^d$ be bounded and $s\in[0,d]$. Then
\begin{align} \label{eq:ucont}
 \frac{d-s}{d} \usS^s(A)\leq \usM^s(A)\leq \usS^s(A),
\end{align}
where for $s=d$ the left inequality is trivial and the right inequality holds only in case $V_A(0)=0$. As a consequence, $$\udim_M A= \udim_S A\,,$$ whenever $V_A(0)=0$. Furthermore,
 \begin{align}\label{eq:lcont}
\lsS^s(A)\leq \lsM^s(A)\leq c_{d,s} \left[\lsS^{s\frac{d-1}{d}}(A)\right]^\frac{d}{d-1},
\end{align}
    where for the right hand side inequality one has to assume $d>1$ and where the constant $c_{d,s}$ just depends on the dimensions $s$ and $d$. As a consequence,
    \begin{align}\label{eq:ldim}
    \ldim_S A\le \ldim_M A \le \frac{d}{d-1} \ldim_S A.
    \end{align}
\end{Theorem}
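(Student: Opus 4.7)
The engine of all four content inequalities is the integral representation
\begin{equation*}
V_A(r)=V_A(0{+})+\int_0^r {\cal H}^{d-1}(\bd A_t)\,dt,
\end{equation*}
valid for every $r>0$: the Kneser growth estimate forces $V_A$ to be locally Lipschitz on $(0,\infty)$ and hence absolutely continuous, and \eqref{vol_deriv} supplies the integrand outside the countable set where $V_A'$ fails to exist. Under the hypothesis $V_A(0)=0$, continuity of Lebesgue measure from above gives $V_A(0{+})=0$, so the boundary term disappears.

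From this formula the right inequality in \eqref{eq:ucont} (under $V_A(0)=0$) and the left inequality in \eqref{eq:lcont} are immediate. For the former, given $\eps>0$ the $\limsup$ definition of $\usS^s$ gives ${\cal H}^{d-1}(\bd A_t)\le(\usS^s(A)+\eps)(d-s)\kappa_{d-s}t^{d-1-s}$ for all small $t$; integrating and using $\int_0^r t^{d-1-s}\,dt=r^{d-s}/(d-s)$ yields $V_A(r)\le(\usS^s(A)+\eps)\kappa_{d-s}r^{d-s}$, hence $\usM^s(A)\le\usS^s(A)$. The symmetric argument with the $\liminf$ definition of $\lsS^s$ and the observation $V_A(0{+})\ge 0$ gives $\lsM^s(A)\ge\lsS^s(A)$ with no assumption on $V_A(0)$.

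For the left inequality in \eqref{eq:ucont}, I invoke Kneser in a different guise. Letting $r\to0{+}$ in $V_A(\lambda R)-V_A(\lambda r)\le\lambda^d(V_A(R)-V_A(r))$ (using $V_A(0{+})=0$) yields $V_A(\lambda R)\le\lambda^d V_A(R)$, i.e.\ $r\mapsto V_A(r)/r^d$ is non-increasing. Differentiating in $r$, and interpreting the derivative as the symmetric average of one-sided derivatives delivered by \eqref{eq:deriv-vol}, one obtains ${\cal M}^{d-1}(\bd A_r)\le(d/r)V_A(r)$ for \emph{every} $r>0$. Dividing by $(d-s)\kappa_{d-s}r^{d-1-s}$ and taking $\limsup$ gives $\usS^s(A)\le(d/(d-s))\usM^s(A)$ for $s<d$, while $s=d$ is trivial. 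The equality $\udim_M A=\udim_S A$ is then the standard contrapositive consequence of the sandwich $((d-s)/d)\usS^s\le\usM^s\le\usS^s$.

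The main obstacle is the right inequality in \eqref{eq:lcont}, which cannot be obtained by integration since pointwise lower bounds on ${\cal H}^{d-1}$ do not translate into upper bounds on its integral. Instead one appeals to the Euclidean isoperimetric inequality, valid for $d>1$, applied to the bounded open set $A_r$ (whose perimeter is dominated by ${\cal H}^{d-1}(\bd A_r)$): $V_A(r)^{(d-1)/d}\le c_0\,{\cal H}^{d-1}(\bd A_r)$ for the sharp Euclidean constant $c_0=1/(d\kappa_d^{1/d})$. Setting $s':=s(d-1)/d$, the exponent identity $(d-1-s')d/(d-1)=d-s$ rewrites this as
\begin{equation*}
\frac{V_A(r)}{\kappa_{d-s}r^{d-s}}\le\frac{c_0^{d/(d-1)}}{\kappa_{d-s}}\left(\frac{{\cal H}^{d-1}(\bd A_r)}{r^{d-1-s'}}\right)^{d/(d-1)},
\end{equation*}
and taking $\liminf$ as $r\to0$ (with the dimension-dependent factors absorbed into a single $c_{d,s}$) yields $\lsM^s(A)\le c_{d,s}[\lsS^{s(d-1)/d}(A)]^{d/(d-1)}$. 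The dimension bound $\ldim_M A\le(d/(d-1))\ldim_S A$ now follows from this by the standard contrapositive (if $s'>\ldim_S A$ then $\lsS^{s'}=0$, forcing $\lsM^{s'd/(d-1)}=0$), while $\ldim_S A\le\ldim_M A$ is immediate from the already proved $\lsS^s\le\lsM^s$. The technical care required at this last step is to verify that the isoperimetric constant is genuinely absolute and that the exponent bookkeeping relating scale-$r$ surface area to scale-$r$ volume is consistent.
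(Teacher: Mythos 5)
Your proposal is correct and follows essentially the same route as the cited proof in \cite{RW09}: Stach\'o's derivative formula \eqref{eq:deriv-vol}/\eqref{vol_deriv} plus the Kneser growth structure of $V_A$ (packaged here as the integral representation of $V_A$ and the monotonicity of $r\mapsto V_A(r)/r^d$, which is a clean reformulation of the abstract Kneser-function estimates used in \cite{RW09} and \cite{RW13}) yields \eqref{eq:ucont} and the left part of \eqref{eq:lcont}, and the isoperimetric inequality gives the right part of \eqref{eq:lcont} — exactly as the paper indicates after Theorem \ref{thm:u-rel-contents}. One small remark: your parenthetical appeal to $V_A(0{+})=0$ when deriving $V_A(\lambda R)\le\lambda^d V_A(R)$ is unnecessary, since $V_A(\lambda R)-V_A(0{+})\le\lambda^d(V_A(R)-V_A(0{+}))$ together with $V_A(0{+})\ge 0$ and $\lambda\ge1$ already gives the claim, so the left inequality in \eqref{eq:ucont} indeed needs no hypothesis on $V_A(0)$.
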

Note that there is a fundamental difference between upper and lower contents. While the upper contents differ at most by a positive constant implying in particular the equivalence of the upper dimensions, the lower Minkowski content is in general only bounded from above by an S-content of some different dimension. This allows different lower dimensions. It was shown in \cite{W11-1}, that there exist indeed sets for which lower Minkowski dimension and lower S-dimension are different, the constants given in \eqref{eq:ldim} were even shown to be optimal, see also \cite{KLV} for the equality cases in these inequalities.

Based on the localisations of \eqref{eq:deriv-vol} and \eqref{vol_deriv}, we will discuss below local versions of the relations in \eqref{eq:equal-of-contents} and Theorem~\ref{thm:main-rw09} in the sense of relative contents. Relative Minkowski contents of a set $A\subset\R^d$ with respect to a second set $\Omega\subset\R^d$ have been studied and used e.g.\ in \cite{Zu05, LRZbook} and are defined by
\begin{align} \label{eq:rel-M-cont2}
  \sM^s(A,\Omega):=\lim_{r\se 0}\frac{\lambda_d(A_{r}\cap \Omega)}{\kappa_{d-s} r^{d-s}}
\end{align}
for $0\leq s\leq d$. We show for instance in Theorem~\ref{thm:relMmeas} below that the $s$-dimensional relative Minkowski content $\sM^s(A,\Omega)$ relative to $\Omega$ exists if and only if the corresponding relative S-content $\sS^s(A,\Omega)$ exists, provided the set $\Omega$ is chosen in a suitable (metrically associated) way.

In \cite{Winter08}, a localization of the Minkowski content (and the S-content) of a compact set $A\subset\R^d$ to a measure on this set has been suggested. The \emph{($s$-dimensional) local Minkowski content} $\mu^s(A,\cdot)$ of $A$ is defined as the weak limit (as $r\searrow 0$) of the following family of measures (whenever it exists):
\begin{align} \label{eq:old_loc_vol}
\mu_r^s(A,\cdot)&:=\frac{\lambda_d(A_r\cap\cdot)}{\kappa_{d-D} r^{d-D}}, \quad r>0.
\end{align}
Note that (in case it exists) the local Minkowski content $\mu^s(A,\cdot)$ of $A$ is a measure on $\R^d$ concentrated on $A$. Its total mass is necessarily given by the $s$-dimensional Minkowski content of $A$, i.e.\ $\mu^s(A,\R^d)=\sM^s(A)$, which implies in particular that it is necessary to choose $s=D:=\dim_M A$ and to assume that the set $A$ is $D$-dimensional Minkowski measurable for the weak limit to exist and to produce a nontrivial limit measure.

It was shown in \cite{Winter08}, that for all nonlattice self-similar sets $K\subset\R^d$ satisfying the open set condition and $D:=\dim_M K$ the limit measure exists and coincides with a multiple of the $D$-dimensional Hausdorff measure $\mu_K$ on $K$, the total mass being given by the Minkowski content $\sM^D(K)$.
In \cite{Winter08} also weak limits of curvature measures $C_k(K_\eps,\cdot)$ are discussed for self-similar sets $K$ under additional assumptions (and the results are generalized in \cite{Z11,RW09,RZ12,WZ13,BZ13}). The case $k=d-1$ is the surface area measure, which is, in fact, defined for all parallel sets of any set $A\subset\R^d$ by $C_{d-1}(A_\eps,\cdot):=\frac 12 \Ha^{d-1}(\bd A_r\cap  \cdot)$. The weak limit $\sigma^s(A,\cdot)$ of the appropriately rescaled surface measures
\begin{align} \label{eq:old_loc_surf}
\sigma_r^s(A,\cdot):=\frac{\Ha^{d-1}(\bd (A_r)\cap \cdot)}{\kappa_{d-s} (d-s) r^{d-s-1}}, r>0
\end{align}
as $r\searrow 0$, provided it exists, is regarded as the ($s$-dimensional) \emph{local S-content} in analogy with the local Minkowski content. The local S-content was shown in \cite{RW09} to exist for $s=D$ for all nonlattice self-similar sets satisfying OSC and, moreover, to coincide with the local Minkowski content. A similar relation has been observed in \cite{KK12} for nonlattice self-conformal sets in $\R$ and in \cite{KK15} for nonlattice limit sets of conformal graph directed systems in $\R$.

The natural question arises, whether there exists a general relation between the local Minkowski content of an arbitrary set and the corresponding local S-content (in case they exist)? Is the equivalence of these measures for self-similar and certain self-conformal sets just a coincidence due to self-similarity? Is it just a consequence of the fact that both measures happen to coincide with some multiple of the natural measure on these sets? Or is there a more general relation in the background? Can the general global relations between Minkowski contents and S-contents obtained in \cite{RW09,RW13} be localized in the sense of measures?

In Section~4 we give an affirmative answer to this last question, which makes clear, that the observed coincidence of the local contents in the self-similar case is not due to the self-similarity but a fundamental general relation. We will show that the local Minkowski content $\mu^D(A,\cdot)$ of an arbitrary closed set $A$ exists if and only if the corresponding local S-content $\sigma^D(A,\cdot)$ exists, and both measures coincide, see Theorems~\ref{thm:local-main} and \ref{thm:local-main3}. This holds even in the case when $A$ is unbounded, provided the weak convergence is replaced by vague convergence. We will also discuss the general properties of these local contents, in particular we will make precise the idea that these measures are locally determined, see Proposition~\ref{thm:local-main2}.

It is well known that for lattice self-similar sets averaging improves the convergence behaviour. The average Minkowski content (compare \eqref{eq:av-rel-MC}) for instance is known to exist for any self-similar set satisfying OSC, cf.~\cite{Gatzouras2000}, similarly the average S-content (compare \eqref{eq:av-rel-SC}) exists for any such set. For self-conformal sets much more is known about the existence of average contents than about the non-averaged counterparts, see e.g.~\cite{bohl13}.
In Section 5, we will therefore generalize the relations obtained in Sections 3 and 4 for relative contents and local contents to average (relative and local) contents and show for instance, that the existence of the average local Minkowski content is equivalent to the existence of the average local S-content, see Theorem~\ref{thm:local-average}. 
In Section 6 we will briefly discuss some applications. In particular, we will demonstrate that several results obtained for (local and average) S-contents in the literature can now easily be recovered from the corresponding known results for Minkowski contents. In the case of self-conformal sets, we will even derive some new results for S-contents from the existing results for Minkowski contents.

Our results clarify in which way one should choose the second set $\Omega$ in relative fractal drums $(A,\Omega)$ (as studied e.g.\ in \cite{LRZbook}), if the aim is to learn something about the geometry of the primary set $A$, namely in a metrically associated way. The examples discussed e.g.\ in \cite{LRZbook} show that all kind of strange things can happen with relative Minkowski contents even for very simple sets $A$, if the second set $\Omega$ is chosen in a too fancy fashion. However, our results show that sets $\Omega$ metrically associated with $A$ are on the one hand well behaved and suffice on the other hand to characterize the (parallel set related) geometry of $A$ completely, see also Remark~\ref{rem:piAB-suffices}.

\section{Local parallel volume and local surface area} \label{sec:two-side}

Let $A$ and $X$ be subsets of $\R^d$. Following Stacho \cite[p.370]{Stacho}, we say that $X$ is \emph{metrically associated with} $A$, if for any point $x\in X$ there exists a point $a\in \overline{A}$ so that
$d(x,a)=d(x,A)$ and all inner points of the line segment $[x,a]$ joining $x$ with $a$ belong to $X$. We write $]x,a[$ for the line segment excluding the endpoints.

First observe that the parallel sets $A_r$ of $A$ are metrically associated with $A$. Moreover, any set of the form $\pi_A^{-1}(B)$ with $B\subset A$ (or $B\subset\R^d$) is metrically associated with $A$. Here $\pi_A$ denotes the metric projection onto the set $\cl{A}$\label{page:metric-proj}. (It is defined on the set $\Unp(A)\subset\R^d$ of those points which have a unique nearest point in $\cl{A}$. Consequently, $\piAB$ is a subset of $\Unp(A)$ for any set $B$. Recall that the set $\exo(A):=\R^d\setminus\Unp(A)$, the \emph{exoskeleton} of $A$, consists of all points that do not have a unique nearest point in $\overline{A}$. Note that $\overline{A}\subset\Unp(A)$ and thus $\exo A\subset (\overline{A})^c$.)

Combining these two constructions, it is easy to see that all sets of the form $A_r\cap \piAB$ are metrically associated with $A$.

\begin{Remark}
This construction can be refined as follows:
Let $N(A)\subset\R^d\times S^{d-1}$ denote the (generalized) normal bundle of $A$, and let $\Pi_A:\Unp(A)\setminus \cl{A}\to N(A)$ be defined by $x\mapsto (\pi_A(x), \frac{x-\pi_A(x)}{|x-\pi_A(x)|})$. Then for any subset $\beta\subset N(A)$, the sets $A_r\cap \Pi_A^{-1}(\beta)$ are again metrically associated with $A$. Taking not only the base point of the metric projection into account but also its direction is a true refinement. It allows e.g.\ to study also the one sided parallel sets of a line segment or a curve.
 To keep things simple, we will formulate all results for preimages $\piAB$ of sets $B$ in the space component. However, it might be useful to keep in mind that all the results in the sequel can be extended to preimages $\Pi_A^{-1}(\beta)$ of sets $\beta$ in the normal bundle.

Note also that we will formulate all results in the sequel for closed sets $A\subset\R^d$ which seems the natural setting for our considerations. However, everything can be adapted to work for arbitrary sets $A$, if the metric projection of $A$ is defined (as above) to send points to the closure $\overline{A}$ of $A$. Since all the results will then be equivalent for a set $A$ and its closure, we can as well restrict to closed sets.
\end{Remark}

Note that any union of sets metrically associated with $A$ is again metrically associated with $A$. Stach\'o \cite{Stacho} also claimed that intersections of sets metrically associated with $A$ are metrically associated with $A$. This is not true in general, not even for two sets, as the following example shows:

\begin{example}
  Let $a_1, a_2\in\R^d$ and $A=\{a_1, a_2\}$. Let $x$ be the midpoint of the line segment $[a_1,a_2]$ and let $B_i:=[x,a_i]$, $i=1,2$. Then $B_1$ and $B_2$ are metrically associated with $A$ but $B_1\cap B_2=\{x\}$ is not.
\end{example}

The intersection stability is true, however, if attention is restricted to intersections of open sets.

\begin{Lemma}\label{lem:Kneser-local}
Let $A\subset\R^d$ and let ${\cal B}$ be a family of open subsets of $\R^d$ metrically associated with $A$. Then the intersection $\bigcap_{B\in{\cal B}} B$ is again metrically associated with $A$.
\end{Lemma}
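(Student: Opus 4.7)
The plan is to prove the following stronger auxiliary claim, from which the lemma follows immediately:
\emph{If $B\subset\R^d$ is open and metrically associated with $A$, then for every $x\in B$ and every nearest point $a\in\overline{A}$ of $x$ (i.e., any $a$ with $|x-a|=d(x,A)$), the open segment $]x,a[$ lies entirely in $B$.}
This is stronger than the defining property of metric association, which only guarantees the existence of \emph{some} such $a$; the openness of $B$ will force the choice to be free. Granting the claim, the lemma is immediate: for $x\in C:=\bigcap_{B\in\cal{B}} B$, pick any nearest point $a\in\overline{A}$ of $x$ (which exists since $\overline{A}$ is closed in $\R^d$), and apply the claim to each $B\in\cal{B}$ to conclude $]x,a[\subset B$, hence $]x,a[\subset C$.

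To prove the claim, assume $x\notin\overline{A}$ (otherwise $a=x$ and the segment is empty), set $r:=d(x,A)=|x-a|>0$, and parametrize the segment by $z_t:=x+t(a-x)$, $t\in[0,1]$. The technical heart of the proof is the elementary identity
\begin{align*}
|z_t-a'|^2-|z_t-a|^2 \;=\; (1-t)\bigl(|a'-x|^2-r^2\bigr)+t\,|a'-a|^2,
\end{align*}
valid for all $a'\in\R^d$ and $t\in[0,1]$. For $a'\in\overline{A}$, the first summand is non-negative since $|a'-x|\geq r=d(x,A)$, and the second is trivially so; moreover, for $t\in(0,1)$ and $a'\neq a$, the second summand is strictly positive. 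Consequently, for every $t\in(0,1)$, $a$ is the \emph{unique} nearest point of $z_t$ in $\overline{A}$.

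Now choose $t_1\in(0,1)$ small enough that $z_t\in B$ for all $t\in[0,t_1]$; this is possible since $B$ is open and $z_0=x\in B$. Applying the metric association of $B$ at the point $z_{t_1}$ produces some $a'\in\overline{A}$ with $|z_{t_1}-a'|=d(z_{t_1},A)$ and $]z_{t_1},a'[\,\subset B$; by the uniqueness just established, $a'=a$. Thus $\{z_s:s\in(t_1,1)\}=\,]z_{t_1},a[\,\subset B$, which together with $\{z_s:s\in(0,t_1]\}\subset B$ yields $]x,a[\,=\{z_s:s\in(0,1)\}\subset B$, proving the claim.

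The main obstacle is the unique-nearest-point property that the identity delivers. Naively one worries about the case in which $\overline{A}$ contains additional points at distance exactly $r$ from $x$ (so the first summand vanishes for some $a'\neq a$), or points $a'$ lying beyond $a$ on the ray from $x$ through $a$ (so the inner product $(a-x)\cdot(a'-x)$ becomes large and could a priori spoil the sign). The symmetric split of the right-hand side---one piece measuring how far $a'$ sits from the sphere $\partial B(x,r)$, the other how far $a'$ sits from $a$---dispatches both configurations in a single stroke.
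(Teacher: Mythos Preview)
Your proof is correct and follows essentially the same strategy as the paper's: both argue that for each open $B\in\cal B$ and each nearest point $a$ of $x$, the whole segment $]x,a[$ lies in $B$, by first using openness to place a point of the segment near $x$ inside $B$, then applying metric association there, and finally invoking that $a$ is the \emph{unique} nearest point of any interior point of $[x,a]$. The only difference is in how that uniqueness is shown---the paper argues by a short triangle-inequality contradiction, whereas you use the quadratic identity $|z_t-a'|^2-|z_t-a|^2=(1-t)(|a'-x|^2-r^2)+t|a'-a|^2$, which gives the same conclusion in one line.
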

\begin{proof}
  Let $x\in X:=\bigcap_{B\in{\cal B}} B$ and let $N_x\subset \overline{A}$ be the set of points $a$ for which $d(x,a)=d(x,A)$. Let $y\in N_x$.
  We claim that for each $B\in{\cal B}$ the line segment $]x,y[$ is contained in $B$. Obviously this implies $]x,y[\subset X$ (and $d(x,y)=d(x,A)$). Recalling that $x\in X$ was arbitrary, we conclude that $X$ is metrically
  associated with $A$ as asserted in the lemma.

  To prove the claim, let $B\in{\cal B}$ and let $(z_n)_{ n\in\N}$ be a sequence of points in $]x,y[$ converging to $x$. Since $B$ is open and $x\in B$, there is an index $n_0\in\N$ such that $z_n\in B$ for $n\geq n_0$.
  Since $B$ is metrically associated with $A$, for each $n\geq n_0$ there exists a point $a_n\in \overline{A}$ such that $d(z_n,a_n)=d(z_n,A)$ and $]z_n, a_n[\subset B$. The proof is complete, if we show that $a_n=y$ for all $n\geq n_0$, since this implies $]z_n,y[\subset B$ and thus $]x,y[\subset B$.
  Suppose that $a_n\neq y$ for some $n\geq n_0$. Then $d(z_n,a_n)\leq d(z_n,y)$. If $a_n\in]x,y[$, we obviously have $d(x,a_n)<d(x,y)$. If not, we have the same inequality, since
  $$
  d(x,a_n)<d(x,z_n)+d(z_n,a_n)\leq d(x,z_n)+d(z_n,y)=d(x,y).
  $$
  But this is a contradiction to $y\in N_x$. Hence $a_n=y$ for each $n\geq n_0$, showing that $]x,y[\subset B$ as claimed. This completes the proof of the lemma.
\end{proof}

A similar statement holds if intersections of sets are considered which are (metrically associated with $A$ and) subsets of ${\rm Unp}(A)$.

Recall that a function $f:(0,\infty)\to(0,\infty)$ is said to satisfy the \emph{Kneser property} (also called a \emph{Kneser function}) if for some integer $d$ and all $\lambda\geq 1$, the inequality
\begin{align}
  \label{eq:kneser}
  f(\lambda b)-f(\lambda a)\leq \lambda^d (f(b)-f(a)) .
\end{align}
holds for all $0<a\leq b$.
The following important observation is due to Stach\'o \cite{Stacho}:

\begin{Lemma} \label{lem:ma-implies-kneser}
  Let $A\subset\R^d$ be closed and let $X\subset\R^d$ be measurable and metrically associated with $A$.  If there exists some $t>0$ such that $\lambda_d(A_t\cap X)$ is finite, then the function $f(t):=\lambda_d(A_t\cap X), t>0$ assumes finite values for all $t>0$ and has the Kneser property.
\end{Lemma}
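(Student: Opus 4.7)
The heart of the statement is the Kneser inequality
$$ f(\lambda b) - f(\lambda a) \;\le\; \lambda^d\bigl(f(b) - f(a)\bigr) \qquad (\lambda\ge 1,\ 0<a\le b), $$
which, using monotonicity of $f$, is equivalent (even without a priori finiteness) to the volume comparison
$$ \lambda_d\bigl((A_{\lambda b}\setminus A_{\lambda a})\cap X\bigr) \;\le\; \lambda^d\,\lambda_d\bigl((A_b\setminus A_a)\cap X\bigr). $$
I would establish this volume comparison first and derive both finiteness everywhere and \eqref{eq:kneser} from it.

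The natural strategy is to adapt Stach\'o's original argument, which treats the case $X=\R^d$, by inserting the metric-association hypothesis at one specific step. The central object is the radial contraction toward $A$,
$$ \Psi_\lambda(x) \;:=\; \pi_A(x) + \lambda^{-1}\bigl(x-\pi_A(x)\bigr), $$
which is well defined on $\Unp(A)\setminus \overline A$. Three elementary observations carry the structural half of the argument: (i)~a direct triangle-inequality computation shows $\pi_A(x)$ is still the unique nearest point of $\Psi_\lambda(x)$, so $d(\Psi_\lambda(x),A)=\lambda^{-1}d(x,A)$; (ii)~because $\Psi_\lambda(x)\in\,]\pi_A(x),x[\,$ and $X$ is metrically associated with $A$, the assumption $x\in X$ forces $\Psi_\lambda(x)\in X$; (iii)~the exoskeleton $\exo(A)$ has $\lambda_d$-measure zero (a classical fact), so restricting to $\Unp(A)$ loses no volume. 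Combining (i)--(iii), $\Psi_\lambda$ maps a full-measure subset of $(A_{\lambda b}\setminus A_{\lambda a})\cap X$ into $(A_b\setminus A_a)\cap X$, so
$$\lambda_d\bigl((A_{\lambda b}\setminus A_{\lambda a})\cap X\bigr)\;\le\;\lambda_d\bigl(\Psi_\lambda^{-1}\bigl((A_b\setminus A_a)\cap X\bigr)\bigr).$$

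The remaining quantitative step is the volume estimate
$$ \lambda_d\bigl(\Psi_\lambda^{-1}(F)\bigr) \;\le\; \lambda^d\,\lambda_d(F) $$
for measurable $F\subset\Unp(A)\setminus \overline A$. This is the main obstacle and constitutes the geometric core of Stach\'o's classical volume bound in~\cite{Stacho}. Crucially, this estimate is intrinsic to the dilation map $\Psi_\lambda$ and the geometry of the normal bundle of $A$---it never refers to $X$---so the proof from~\cite{Stacho} transfers here without modification. Heuristically, $\Psi_\lambda^{-1}$ stretches the one radial direction by the factor $\lambda$ while barely affecting the tangential directions, yielding the bound $\lambda^d$; rigorously one proves it by a covering/partition argument in the normal bundle of $A$. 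Applying this with $F=(A_b\setminus A_a)\cap X$ gives the desired volume comparison.

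Finiteness of $f$ on all of $(0,\infty)$ is then immediate. Monotonicity gives $f(t)\le f(t_0)<\infty$ for $t\le t_0$. For $t>t_0$, apply the volume comparison with $b=t_0$, $a=t_0/\lambda$ and $\lambda=t/t_0$: the right-hand side is at most $\lambda^d f(t_0)$, so $f(t)\le f(t_0)+\lambda^d f(t_0)<\infty$. Once $f$ is known to be finite everywhere, the volume comparison rearranges to the Kneser inequality~\eqref{eq:kneser}, completing the plan.
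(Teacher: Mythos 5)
Your proposal takes essentially the same route as the paper, which also reduces the lemma to Kneser's pointwise radial construction near $A$ (the map the paper calls $g$ is the inverse of your $\Psi_\lambda$) together with Kneser's Lipschitz/volume estimate, and then observes that metric association of $X$ with $A$ is exactly what is needed to conclude that this map carries $X\cap A_{\lambda b}\setminus A_{\lambda a}$ into $X\cap A_b\setminus A_a$. The paper only sketches this via a reference to Kneser's Hilfssatz~7; your version spells out the compatibility with $X$, the negligibility of $\exo(A)$, and the deduction of finiteness more explicitly, and is correct.
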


Stach\'o worked in fact with bounded sets $A$ in his paper and he did not give a proof but remarked that the proof of Kneser (that the global parallel volume $t\mapsto\lambda_d(A_t)$ has the Kneser property) applies also to the above $f$, cf. \cite[Hilfssatz~7, p.250]{Kneser}. Indeed, given $\lambda\geq 1$, Kneser constructed a Lipschitz mapping $g$ (with Lipschitz constant $\lambda$) from a subset of the set $A_b\setminus A_a$ onto the set $A_{\lambda b}\setminus A_{\lambda a}$ which implies the volume estimate $\lambda_d(A_{\lambda b}\setminus A_{\lambda a})=\lambda_d(g(A_{b}\setminus A_{a}))\leq\lambda^d \lambda_d(A_{b}\setminus A_{a})$ and thus the Kneser property. In the situation of Lemma~\ref{lem:ma-implies-kneser}, exactly the same pointwise construction can be used restricted to the set $X$. Therefore, the statement remains true even for unbounded sets $A$.  Indeed, if $\lambda_d(A_t\cap X)$ is finite for some $t>0$, then, by monotonicity, $f(s)$ is finite for each $0<s<t$ and, from the existence of the Lipschitz mapping $g$ restricted to $X$, the finiteness of $f(s)$ for each $s\geq t$ can be inferred together with the Kneser property.  In particular, we conclude that either $\lambda_d(A_t\cap X)=\infty$ for all $t>0$ (in which case the Kneser property makes no sense) or $\lambda_d(A_t\cap X)$ is finite for all $t>0$ (in which case the Kneser property holds). Observe that for a bounded set $A$, $\lambda_d(A_t\cap X)$ is finite for any measurable set $X$. Another sufficient (but not necessary) condition for this finiteness is that $A\cap \pi_A(X)$ is bounded, or more specifically, that $A\cap B$ is bounded and $X=\piAB$.

As mentioned in the introduction, Stach\'o related the derivative of the volume function $V_A$ of a bounded set $A\subset\R^d$ to the $(d-1)$-dimensional Minkowski content of its parallel boundaries: 
\begin{align*} 
    \sM^{d-1}(\partial (A_{r}))=\frac 12\left((V_A)_{-}'(r)+(V_A)_{+}'(r)\right),
  \end{align*}
 cf.~\eqref{eq:deriv-vol}. 
 Note that this equation does not hold in general with the open parallel set $A_r$ on the left hand side replaced by the closed parallel set $\{ z\in\R^d:\, d(z,A)\leq r\}$.

Our first aim in this section is a localization of this result. For any closed set $A\subset\R^d$ and any measurable set $B\subset\R^d$, let the function $V_{A,B}:(0,\infty)\to(0,\infty]$ be defined by
$$
V_{A,B}(r):=\lambda_d(A_r\cap \piAB), r>0.
$$
$V_{A,B}$ is the \emph{local parallel volume of $A$ relative to the set $B$}. Note that, by Lemma~\ref{lem:ma-implies-kneser}, $V_{A,B}$ is a Kneser function (provided $V_{A,B}(r)$ is finite or some and thus all $r>0$), since the set $\piAB$ is metrically associated with $A$. In order to formulate a local counterpart of \eqref{eq:deriv-vol} for $V_{A,B}$, we also need a localization of the "boundary measure" on the left hand side of this equation.
It turns out that the relevant notion of "boundary measure" in this context are relative Minkowski contents, as introduced and used e.g.\ in \cite{Zu05, LRZbook}. 
For sets $C, \Omega\subset\R^d$ and $s>0$, let
\begin{align} \label{eq:rel-M-cont}
  \sM^s(C,\Omega):=\lim_{r\se 0}\frac{\lambda_d(C_{r}\cap \Omega)}{\kappa_{d-s} r^{d-s}}
\end{align}
be the \emph{$s$-dimensional Minkowski content of $C$ relative to $\Omega$}, whenever this limit makes sense. For the moment, we will be interested in the special case when $C=\bd (A_r)$ is the parallel boundary of some set $A$ for some $r>0$, $s=d-1$ and $\Omega=\piAB$ is the preimage of some set $B$ under the metric projection onto $A$.
\begin{Theorem}[Localization of Stach\'o's Theorem] \label{thm:local-deriv-volume1}
  Let $A\subset\R^d$ be closed and let $B\subseteq\R^d$ be measurable and such that $V_{A,B}(r)$ is finite for some and hence all $r>0$.
  Then, for each $r>0$,
  \begin{align} \label{eq:local-deriv-vol}
    \sM^{d-1}(\partial(A_{r}), \piAB )=\frac 12\left((V_{A,B})_{-}'(r)+(V_{A,B})_{+}'(r)\right).
  \end{align}
\end{Theorem}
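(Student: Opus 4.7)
My plan is to adapt Stach\'o's original proof of the global formula \eqref{eq:deriv-vol} to the localized setting, using $\piAB$ in place of the whole ambient space. The two essential ingredients are the Kneser property of $V_{A,B}$ supplied by Lemma~\ref{lem:ma-implies-kneser}, and the radial parametrization of $\piAB$ via the metric projection $\pi_A$.

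By Lemma~\ref{lem:ma-implies-kneser} the function $V_{A,B}$ is Kneser on $(0,\infty)$, so Stach\'o's arguments transfer verbatim to yield continuity of $V_{A,B}$ and the existence of both one-sided derivatives $(V_{A,B})'_-(r)$ and $(V_{A,B})'_+(r)$ at every $r>0$; in particular the right-hand side of \eqref{eq:local-deriv-vol} is well-defined. The core of the proof is to establish that for $0<\rho<r$,
$$
\lambda_d\bigl((\partial A_r)_\rho\cap\piAB\bigr) = V_{A,B}(r+\rho)-V_{A,B}(r-\rho)+o(\rho)\qquad(\rho\searrow 0).
$$
The inclusion $(\partial A_r)_\rho\subseteq A_{r+\rho}\setminus\overline{A_{r-\rho}}$ follows from $1$-Lipschitz continuity of $d(\cdot,A)$, so the right-hand side dominates the left. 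For the reverse, the outer half $\{z\in\piAB\colon r\leq d(z,A)<r+\rho\}$ lies in $(\partial A_r)_\rho$ because, for such $z$ with unique nearest point $a=\pi_A(z)$, the point on the segment $]z,a[$ at distance exactly $r$ from $a$ belongs to $\piAB\cap\partial A_r$ and is within distance $d(z,A)-r<\rho$ of $z$; metric association is used to conclude that this intermediate point still has $a$ as its unique nearest point in $A$. The inner half $\{z\in\piAB\colon r-\rho<d(z,A)<r\}$ is trickier: its projection segment lies inside $A_r$ and supplies no nearby point of $\partial A_r$. Nonetheless, the subset of \emph{deep} points (those whose entire $\rho$-ball lies in $A_r$) has measure $o(\rho)$, by a Lipschitz rescaling argument analogous to Kneser's, which maps $\piAB$ into itself thanks to metric association.

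Combining the estimate above with the definition \eqref{eq:rel-M-cont} of the relative Minkowski content and $\kappa_1=2$ gives
$$
\sM^{d-1}\!\bigl(\partial A_r,\piAB\bigr) =\lim_{\rho\searrow 0}\frac{V_{A,B}(r+\rho)-V_{A,B}(r-\rho)}{2\rho} =\frac12\bigl((V_{A,B})'_+(r)+(V_{A,B})'_-(r)\bigr),
$$
the last equality obtained by splitting the numerator as $(V_{A,B}(r+\rho)-V_{A,B}(r))+(V_{A,B}(r)-V_{A,B}(r-\rho))$ and applying the definitions of the one-sided derivatives. The main obstacle I anticipate is the $o(\rho)$ control of the inner half: metric association does not produce nearby boundary points by retracting toward $A$, so the Kneser property must be exploited quantitatively and shown to restrict compatibly to $\piAB$, which it does because the underlying radial rescaling $z\mapsto\pi_A(z)+\lambda(z-\pi_A(z))$ preserves metric association.
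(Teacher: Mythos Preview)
Your approach is correct and essentially identical to the paper's own proof. The paper likewise reduces (after scaling to $r=1$) to the decomposition $(\partial A_r)_\rho = (A_{r+\rho}\setminus\overline{A_{r-\rho}})\setminus Y(\rho)$, where $Y(\rho)$ is exactly your set of ``deep points'' in the inner half, and like you it defers the crucial estimate $\lambda_d(Y(\rho)\cap\piAB)=o(\rho)$ to an inspection of Stach\'o's original argument, noting only that it restricts to $\piAB$.
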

\begin{proof}
  First observe that the set $\pi_A^{-1}(B)$ is measurable and metrically associated with $A$. Hence, by Lemma~\ref{lem:ma-implies-kneser}, $V_{A,B}$ is a Kneser function.
  Therefore, left and right derivatives of $V_{A,B}$ exist for any $r>0$ and so the right hand side of \eqref{eq:local-deriv-vol} is well defined.
  From now on, we can follow the line of proof of \cite[Theorem 2]{Stacho}. By a scaling argument, it is enough to prove the equation \eqref{eq:local-deriv-vol} for $r=1$.
That is, we will show that
  \begin{align}
      \sM^{d-1}(\partial(A_{1}), \piAB))=\frac 12((V_{A,B})_{-}'(1)+(V_{A,B})_{+}'(1)).
  \end{align}
It has been observed in the proof of \cite[Theorem 2]{Stacho}, that
$$
(\partial A_{1})_t =\left(A_{(t+1)}\setminus \cl{A_{(1-t)}}\right)\setminus Y(t)
$$
where
$$
Y(t):=\left\{x\in\R^d\mid 1-t<d(x,A)<1 \text{ and } d(x,\partial(A_1))>t\right\},
$$
for which one has (in case of a bounded set $A$)
$\lim_{t\se 0} t^{-1} \lambda_d(Y(t))=0.$
Inspection of Stach\'o's argument, however, clarifies that in the unbounded case the same holds for $Y(t)$ intersected with $\piAB$, i.e., whenever $V_{A,B}(r)$ is finite for some (and thus all) $r>0$, we have
\begin{align} \label{eq:Yt}
\lim_{t\se 0} t^{-1} \lambda_d(Y(t)\cap \piAB)=0.
\end{align}
Now we can compute directly the relative Minkowski content of the boundary $\partial(A_1)$:
\begin{align*}
  \sM^{d-1}&(\partial(A_{1}), \pi_A^{-1}(B))=\lim_{t\se 0}\frac 1{2t}{\lambda_d((\partial A_{1})_t\cap \pi_A^{-1}(B))}\\
  &=\lim_{t\se 0}\frac 1{2t}{\lambda_d\left(\left(\left(A_{1+t}\setminus \cl{A_{1-t}}\right)\setminus Y(t)\right)\cap \pi_A^{-1}(B)\right)}\\
  &=\lim_{t\se 0}\left(\frac{\lambda_d\left(\left(A_{1+t}\setminus \cl{A_{1-t}}\right)\cap \pi_A^{-1}(B)\right)}{2t}-\frac{\lambda_d(Y(t)\cap \pi_A^{-1}(B))}{2t}\right).
\end{align*}
By \eqref{eq:Yt}, the second term converges to zero. The numerator of the first term can be written as
$$
\lambda_d\left(A_{1+t}\cap \pi_A^{-1}(B)\right)-\lambda_d\left(A_{1-t}\cap \pi_A^{-1}(B)\right)= V_{A,B}(1+t)-V_{A,B}(1-t)
$$
Inserting $V_{A,B}(1)- V_{A,B}(1)$, we immediately get
\begin{align*}
 \sM^{d-1}(\partial (A_{1}),& \pi_A^{-1}(B))\\
 &=\frac 12 \left(\lim_{t\se 0} \frac{V_{A,B}(1+t)-V_{A,B}(1)}{t}+\lim_{t\nearrow 0} \frac{V_{A,B}(1+t)-V_{A,B}(1)}{t}\right)\\
 &=\frac 12 \left((V_{A,B})'_+(1)+(V_{A,B})'_-(1)\right)
\end{align*}
as claimed.
\end{proof}

As a consequence of Theorem~\ref{thm:local-deriv-volume1}, we note that the relative Minkowski content $\sM^{d-1}(\partial (A_{r}), \pi_A^{-1}(B))$ is well defined for any measurable set $B$ such that $V_{A,B}$ is finite and any $r>0$. In particular, the limit involved in its definition exists as a finite value. For bounded $A$, the finiteness can also be seen from the fact that $\Omega\mapsto \sM^{d-1}(\partial A_{r}, \Omega), \Omega\subseteq\R^d$ is a monotone set function, and so in particular
$$
\sM^{d-1}(\partial (A_{r}), \pi_A^{-1}(B))\leq \sM^{d-1}(\partial (A_{r}), \R^d)=\sM^{d-1}(\partial (A_{r})).
$$
The latter Minkowski content exists and is finite, since the set $\bd (A_r)$ is $(d-1)$-rectifiable for each $r>0$, cf.~\cite[Proposition~2.3]{RW09}.



In \eqref{eq:deriv-vol}, i.e., when the volume function of the full parallel sets is considered, the Minkowski content can be replaced by the Hausdorff measure, which is due to the fact that $\bd A_r$ is $(d-1)$-rectifiable for each $r>0$. Locally this is not true in general, i.e., the relative Minkowski content of the boundary relative to some set cannot be replaced by the Hausdorff measure of the boundary restricted to this set. To illustrate this, we provide an example.
\begin{example}
  Let $A\subset\R^2$ be the union of two squares of side length 2 and distance two from each other, e.g.\ $A:=([-3,-1]\cup[1,3])\times[-1,1]$.  Consider the open vertical segment $B=](-1,-1),(-1,1)[$. Then, for $0<t<1$, $(\bd A_{1})_t\cap\pi_A^{-1}(B)$ is the interior of the rectangle $R(t)$ given by the vertices $(0,-1),(0,1), (-t,-1)$ and $(-t,1)$. Since $\lambda_2(R(t))=2t$, we have
  $$\sM^{d-1}(\bd A_{1},\pi_A^{-1}(B))=1,$$  while $$\sM^{d-1}(\bd A_{1}\cap \pi_A^{-1}(B))=\Ha^{d-1}(\bd A_{1}\cap \pi_A^{-1}(B))=0$$
  and
  $$\sM^{d-1}(\bd A_{1}\cap \cl{\pi_A^{-1}(B)})=\Ha^{d-1}(\bd A_{1}\cap \cl{\pi_A^{-1}(B)})=2.$$
\end{example}

The example also shows that, in general, the relative Minkowski content cannot easily be replaced by the usual Minkowski content of the intersection and that there is also no easy way out by considering the closure of the restricting set as one may expect at first glance. The relative Minkowski content is just the right notion to take care of ``one-sided'' situations as in the example and to derive a local relation of the form \eqref{eq:local-deriv-vol} which holds for any $r>0$.  However, for most radii $r>0$ either of the three notions can be used.
We have the following relation which generalizes \cite[Corollary 2.5]{RW09}:
\begin{Theorem} \label{thm:loc-diff-points}
  Let $A \subset\R^d$ be closed and $B \subset\R^d$ a Borel set such that $A\cap B$ is bounded. 
  Then, for all $r>0$ up to a countable set of exceptions, the function $V_{A,B}$ is differentiable at $r$ with
  \begin{align} \label{eq:local-deriv-vol2}
  V'_{A,B}(r)=\sM^{d-1}(\partial(A_{r}), \piAB )&=\Ha^{d-1} (\partial(A_{r})\cap \piAB)\\\notag &=\sM^{d-1}(\partial(A_{r})\cap \piAB ).
\end{align}
\end{Theorem}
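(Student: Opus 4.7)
Since $A\cap B$ is bounded, $V_{A,B}$ takes only finite values (cf.\ the discussion after Lemma~\ref{lem:ma-implies-kneser}) and is therefore a Kneser function by that lemma. Kneser functions are locally Lipschitz and possess one-sided derivatives everywhere, so the set $E\subset(0,\infty)$ at which $(V_{A,B})'_+\neq(V_{A,B})'_-$ is at most countable. For every $r\notin E$, Theorem~\ref{thm:local-deriv-volume1} immediately yields $V'_{A,B}(r)=\sM^{d-1}(\partial A_r,\piAB)$, handling the first equality.

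To link the other quantities to $V'_{A,B}$, I would invoke the classical coarea formula for the $1$-Lipschitz distance function $d_A$. Since $|\nabla d_A|=1$ holds $\lambda_d$-almost everywhere on $\R^d\setminus A$, it gives
\[
V_{A,B}(b)-V_{A,B}(a)=\int_a^b g(s)\,ds,\qquad g(s):=\Ha^{d-1}(\partial A_s\cap\piAB),
\]
for all $0<a<b$, so $V_{A,B}$ is absolutely continuous with $V'_{A,B}=g$ Lebesgue-almost everywhere. To upgrade this to a pointwise identification on the co-countable set $(0,\infty)\setminus E$, I would show that $g$ is right-continuous at every $r>0$. The right-continuity is proved by analysing the radial flow $T_\rho(x):=\pi_A(x)+(\rho/r)(x-\pi_A(x))$ attached to a point $x\in\partial A_r\cap\piAB$: inward contraction ($\rho\le r$) is always well-defined because the segment $[\pi_A(x),x]$ lies in $\Unp(A)$ with constant projection $\pi_A(x)$, and it scales $\Ha^{d-1}$ by $(\rho/r)^{d-1}$; applying this to $\partial A_s\cap\piAB$ for $s>r$ gives the upper bound $g(s)\le (s/r)^{d-1}g(r)$ and hence $\limsup_{s\searrow r}g(s)\le g(r)$. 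The matching lower bound comes from the outward extension, which is valid on the subset of $\partial A_r\cap\piAB$ whose normal ray can still be continued past distance $s-r$, and whose complement in $\partial A_r$ is $\Ha^{d-1}$-negligible. Right-continuity of $g$ then forces $(V_{A,B})'_+(r)=g(r)$ for every $r>0$, and at $r\notin E$ this equals $V'_{A,B}(r)$, giving the second equality.

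For the last identity, $\partial A_r\cap\cl{\piAB}$ is a compact $(d-1)$-rectifiable set (bounded by the hypothesis on $A\cap B$, rectifiable by \cite[Proposition~2.3]{RW09}), so that $\sM^{d-1}$ and $\Ha^{d-1}$ agree on it. Since Minkowski content depends only on closure, $\sM^{d-1}(\partial A_r\cap\piAB)=\Ha^{d-1}(\cl{\partial A_r\cap\piAB})$, and the theorem reduces to the claim $\Ha^{d-1}(\partial A_r\cap(\cl{\piAB}\setminus\piAB))=0$ for $r\notin E$. This follows by running the same radial-flow analysis with $\cl{\piAB}$ in place of $\piAB$: the resulting companion $\bar g(s):=\Ha^{d-1}(\partial A_s\cap\cl{\piAB})$ is left-continuous by the mirror image of the previous argument, satisfies $(V_{A,B})'_-(r)=\bar g(r)$, and at $r\notin E$ the equality of one-sided derivatives of $V_{A,B}$ forces $\bar g(r)=g(r)$, which is the desired vanishing of mass on the ``boundary wall'' of $\piAB$.

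The main obstacle is the right-continuity of $g$. The contracting direction is essentially immediate from the classical fact that the segment from a point to its unique nearest point in $A$ stays in $\Unp(A)$, but the outward direction requires the technically delicate assertion that, for each $r>0$, the set of $x\in\partial A_r$ whose normal ray cannot be continued forward has vanishing $(d-1)$-dimensional Hausdorff measure. This is the ingredient that transforms the Lebesgue-a.e.\ identification provided by coarea into the co-countable identification demanded by the theorem.
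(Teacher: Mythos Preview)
Your strategy---coarea to get $V'_{A,B}=g$ almost everywhere, then right-continuity of $g$ to upgrade to all differentiability points---is different from the paper's, which instead proves directly that $(V_{A,B})'_+(r)=\Ha^{d-1}(\partial A_r\cap\piAB)$ for \emph{every} $r>0$ (Proposition~\ref{lem:pos-bd}) by invoking the general Steiner formula and support measures of Hug--Last--Weil \cite{HLW04}. That single identity, together with Lemma~\ref{lem:bd=pos-bd}, yields the second equality immediately at every point where the two one-sided derivatives agree; no separate continuity argument for $g$ is needed.

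Your radial-flow argument, however, has a genuine gap. The inward contraction $T_\rho(x)=\pi_A(x)+(\rho/r)(x-\pi_A(x))$ does \emph{not} scale $\Ha^{d-1}$ by the factor $(\rho/r)^{d-1}$: the map involves $\pi_A$, which is not Lipschitz on $\Unp(A)$ near the exoskeleton (take $A=\{(-1,0),(1,0)\}\subset\R^2$ and two points on $\partial A_s$ just on either side of the vertical axis---they are arbitrarily close, but their images under $T_\rho$ stay a definite distance apart). So the inequality $g(s)\le(s/r)^{d-1}g(r)$ cannot be read off from the inward map as you suggest. One could try to salvage it via Kneser's outward Lipschitz map, but that map is only defined on the subset of $\partial A_r$ whose normal ray reaches level $s$, which brings you straight back to the ``terminal points'' issue.

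That issue is the real obstacle, and you correctly flag it as the crux: for fixed $r$, the set $\{x\in\partial A_r\cap\Unp(A):\delta(A,\pi_A(x),u_x)=r\}$ must have $\Ha^{d-1}$-measure zero. This is not elementary (the parabola $A=\{(t,t^2)\}$ already shows such terminal points exist, e.g.\ at $(0,1/2)$ for $r=1/2$), and you do not prove it. Establishing it is essentially equivalent in difficulty to the pointwise identity $(V_{A,B})'_+=g$ that the paper obtains from \cite{HLW04}. Without it your upgrade from ``a.e.'' to ``co-countable'' does not go through, and the same defect propagates to your treatment of the third equality via $\bar g$. The paper's route, while less self-contained, cleanly outsources exactly this difficulty to the existing machinery of \cite{HLW04}.
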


While the first equality in \eqref{eq:local-deriv-vol2} follows from Theorem~\ref{thm:local-deriv-volume1} and the third equality is due to the $(d-1)$-rectifiability of $\bd (A_r)$, the proof of second equality requires some work. Recall that the
positive boundary $\partial^+ Z$ of a set $Z\subset\R^d$ is the set of all boundary points $z\in\bd Z$ such that there exists a point $y\notin \overline{Z}$ with $|y-z|=d(y,Z)$. Our first observation is that preimages under the metric projection only see the positive boundary of the parallel sets.
\begin{Lemma} \label{lem:bd=pos-bd}
Let $A, B \subseteq\R^d$. Then, for any $r>0$,
\begin{align} \label{eq:bd=pos-bd}
  \partial (A_r)\cap\piAB=\partial^+ (A_r)\cap\piAB.
\end{align}
In particular, $\partial (A_r)\cap\Unp(A)=\partial^+ (A_r)\cap\Unp(A)$ and therefore $\partial (A_r)\setminus \partial^+ (A_r)\subset \exo A$. More precisely, it holds $\partial^+ (A_r)\subset \Unp(A)$ and thus $\partial^+ (A_r)=\partial (A_r)\cap\Unp(A)$.
\end{Lemma}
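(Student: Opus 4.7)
The inclusion $\bd^+(A_r)\cap\piAB\subseteq\bd(A_r)\cap\piAB$ is immediate from $\bd^+(Z)\subseteq\bd Z$, so the substance of the first identity is the reverse inclusion, while the content of the final identification $\bd^+(A_r)=\bd(A_r)\cap\Unp(A)$ lies in the additional inclusion $\bd^+(A_r)\subseteq\Unp(A)$. I would establish these two facts separately and then read off $\bd(A_r)\setminus\bd^+(A_r)\subseteq\exo A$ and the final identity as formal consequences; in particular, the first equation with the choice $B=\R^d$ combined with $\bd^+(A_r)\subseteq\Unp(A)$ immediately gives $\bd^+(A_r)=\bd(A_r)\cap\Unp(A)$.

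For $\bd^+(A_r)\subseteq\Unp(A)$, take $z\in\bd^+(A_r)$ with a witness $y\notin\overline{A_r}$ satisfying $|y-z|=d(y,A_r)$. Since $\overline{A_r}=\{d(\cdot,A)\leq r\}$ and for any $y$ with $d(y,A)>r$ the point $a+r(y-a)/d(y,A)$ (with $a$ any nearest point of $y$ in $A$) lies in $\overline{A_r}$ and realizes distance $d(y,A)-r$, one has $d(y,A_r)=d(y,A)-r$, whence $d(y,A)=r+|y-z|$. Picking $\tilde a\in A$ with $|z-\tilde a|=d(z,A)=r$ (possible since $A$ is closed), we see $|y-\tilde a|\leq|y-z|+r=d(y,A)$, forcing equality in the triangle inequality, so $z\in[y,\tilde a]$ and $\tilde a$ is a nearest point of $y$. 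If a second $a'\in A$ had $|z-a'|=r$, the same argument would place $a'$ on the ray from $y$ through $z$ at distance $r$ past $z$, a uniquely determined point; hence $a'=\tilde a$ and $z\in\Unp(A)$ with $\pi_A(z)=\tilde a$.

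For the main inclusion $\bd(A_r)\cap\piAB\subseteq\bd^+(A_r)$, fix $z\in\bd(A_r)\cap\piAB$ with $a:=\pi_A(z)$, so $|z-a|=r$. The canonical witness is $y(s):=z+(s/r)(z-a)$ for $s>0$; since $|y(s)-a|=r+s$, the Lipschitz property of $d(\cdot,A)$ gives $d(y(s),A)\leq r+s$. If the equality $d(y(s),A)=r+s$ can be shown for some small $s>0$, then $y(s)\notin\overline{A_r}$ and the identity $d(y,\overline{A_r})=d(y,A)-r$ yields $d(y(s),A_r)=s=|y(s)-z|$, exhibiting $z$ as a nearest point of $y(s)$ in $\overline{A_r}$ and hence $z\in\bd^+(A_r)$. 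The crux of the entire proof is thus this equality. A contradiction attempt produces sequences $s_n\searrow 0$ and $a_n\in A$ with $|y(s_n)-a_n|<r+s_n$; the triangle inequality forces $|z-a_n|<r+2s_n$, and by closedness of $A$ and uniqueness of $\pi_A(z)$, the sequence $(a_n)$ subconverges to $a$. Expanding $|y(s_n)-a_n|^2=(r+s_n)^2+2(1+s_n/r)\langle z-a,a-a_n\rangle+|a_n-a|^2$ and combining $|y(s_n)-a_n|^2<(r+s_n)^2$ with the strict uniqueness inequality $|z-a_n|^2>r^2$ pinches $\langle z-a,a_n-a\rangle/|a_n-a|^2$ between $1/(2(1+s_n/r))$ and $1/2$, both tending to $1/2$; pushing this delicate pinching through to a contradiction, likely by ruling out approach of $(a_n)$ to $a$ along directions nearly orthogonal to $z-a$, is where the argument will require the most care.
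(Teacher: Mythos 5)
Your reduction of the Lemma to two core inclusions, and your derivation of the remaining claims as formal consequences of those two, are both correct. The proof of $\bd^+(A_r)\subseteq\Unp(A)$ is also correct and essentially the paper's argument: both exploit the forced collinearity of the witness $y$, the boundary point $z$, and any nearest point of $z$ in $A$ (you argue directly via $|y-\tilde a|\le|y-z|+r=d(y,A)$, the paper by contradiction, but the geometry is the same).

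The genuine gap is in the main inclusion $\bd(A_r)\cap\piAB\subseteq\bd^+(A_r)$, which you honestly flag as incomplete. What you need --- that $d(y(s),A)=r+s$ for some $s>0$, i.e.\ that the ray from $a=\pi_A(z)$ through $z$ extends a little past $z$ with $a$ still a nearest point --- is precisely what the paper asserts without justification (``the preimage $\pi_A^{-1}(z)$ contains a segment $[z,y]$ such that $x\in[z,y)$''). Your pinching attempt cannot be pushed through to a contradiction because the claim is in fact false at focal points. Concretely, let $A=\{(t,t^2):t\in\R\}\subset\R^2$, $r=1/2$, $z=(0,1/2)$, $a=(0,0)$. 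Then $z\in\bd(A_{1/2})$, $z\in\Unp(A)$ with $\pi_A(z)=a$, yet $\pi_A^{-1}(a)=\{(0,v):v\le 1/2\}$ stops exactly at $z$: for $v>1/2$ the nearest points of $(0,v)$ in $A$ are $(\pm\sqrt{v-1/2},\,v-1/2)$, not $a$. Moreover $z\notin\bd^+(A_{1/2})$: by the collinearity forced in your first part, any witness $y$ must lie on the ray $\{(0,v):v>1/2\}$, and there $d(y,A_{1/2})=\sqrt{v-1/4}-1/2<v-1/2=|y-z|$. So this inclusion, and hence the Lemma as a pointwise set identity, fails at such $z$; the exceptional set is $\Ha^{d-1}$-negligible in this example, and what Proposition~\ref{lem:pos-bd} really needs is only the resulting equality of $\Ha^{d-1}$-measures, but neither your argument nor the paper's establishes the stated pointwise identity.
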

\begin{proof}
  Let $x\in\partial (A_r)\cap\piAB$. Then $z:=\pi_A(x)\in B$ and the preimage $\pi_A^{-1}(z)$ contains a segment $[z,y]$ such that $x\in[z,y)$. We claim that the point $y$ satisfies $|y-x|=d(y,A_r)$, which implies that $x\in\bd^+ (A_r)$. Indeed, assume $y$ does not satisfy the above equation. Then there is a point $x'\in\bd (A_r)$ such that $|y-x'|<|y-x|$ and a point $z'\in A$ such that $|x'-z'|=r$. Obviously, $z'\neq z$, since otherwise
  $$
  |y-z|=|y-x|+|x-z|>|y-x'|+r=|y-x'|+|x'-z|\geq|y-z|,
  $$
  which is not possible. Moreover, we have
  $$
  d(y,A)\leq|y-z'|\leq|y-x'|+|x'-z'|<|y-x|+r\leq |y-z|,
  $$
  which implies that $y$ is not in the preimage $\pi_A^{-1}(z)$ of $z$, a contradiction. Therefore, we conclude $x\in\bd^+ (A_r)$, which shows that $\partial (A_r)\cap\piAB\subset \partial^+ (A_r)\cap\piAB$.
   The reverse inclusion is obvious. The second assertion follows from applying \eqref{eq:bd=pos-bd} to the set $B:=A$.

   For a proof of the inclusion $\partial^+ (A_r)\subset \Unp(A)$, let $z\in\partial^+ (A_r)$. Then there exists a point $y$ outside $\overline{A_r}$ such that $|y-z|=d(z,A_r)$.
   By way of contradiction, assume that $z$ does not admit a unique metric projection onto $A$, i.e. there exists at least two distinct points $a,b\in A$ such that $|z-a|=|z-b|=d(z,A)=r$. At least one of those two points, say $a$, is not on the ray from $y$ trough $z$. Therefore, we have $$|y-a|<|y-z|+|z-a|=|y-z|+r.$$
   Let $z'$ be the point on the segment $[y,a]$ such that $|z'-a|=r$. Then, in particular, $z'\in \overline{A_r}$ and $|y-a|=|y-z'|+|z'-a|=|y-z'|+r$. Plugging this into the above inequality and subtracting $r$ yields $|y-z'|<|y-z|$ and thus $d(y,A_r)<|y-z|$, which is a contradiction to the choice of $y$. Hence, $z\in\Unp(A)$.
\end{proof}

It is shown in \cite[Corollary 4.6]{HLW04}, that the global relation $(V_{A})_{+}'(r)=\Ha^{d-1}(\partial^+ A_r)$ holds for each $r>0$ for any compact set $A\subset\R^d$. Using the results in \cite{HLW04}, we derive the following local counterpart of this equation.
\begin{Proposition} \label{lem:pos-bd}
Let $A \subset\R^d$ be closed and let $B\subset\R^d$ be a Borel set such that $A\cap B$ is bounded. Then, for any $r>0$,
\begin{align} \label{eq:pos-bd}
  (V_{A,B})_{+}'(r)=\Ha^{d-1}(\partial^+ (A_r)\cap\piAB)=\Ha^{d-1}(\partial (A_r)\cap\piAB).
\end{align}
\end{Proposition}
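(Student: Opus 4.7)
The plan is to handle the two equalities in \eqref{eq:pos-bd} separately. The second equality is immediate from Lemma~\ref{lem:bd=pos-bd}: since $\piAB \subset \Unp(A)$, that lemma identifies the sets $\bd(A_r) \cap \piAB$ and $\bd^+(A_r) \cap \piAB$, whence their $\Ha^{d-1}$-measures agree.

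For the first equality I would adapt the proof of \cite[Corollary~4.6]{HLW04} to the local setting. Fix $t > 0$. Since $\cl{A_r} \subset A_{r+t}$ and $\bd A_r$ has Lebesgue measure zero,
\begin{align*}
V_{A,B}(r+t) - V_{A,B}(r) = \lambda_d\big((A_{r+t} \setminus \cl{A_r}) \cap \piAB\big).
\end{align*}
The engine of the HLW04 argument is a parametrization of the annular layer $A_{r+t} \setminus \cl{A_r}$ by the normal flow emanating from $\bd^+(A_r)$. For each $z \in \bd^+(A_r) \subset \Unp(A)$ the unit outer normal $\nu(z) := (z - \pi_A(z))/r$ is well defined, and
\begin{align*}
\Phi_r(z, u) := z + u\,\nu(z), \qquad u \in (0, t),
\end{align*}
covers $A_{r+t} \setminus \cl{A_r}$ essentially bijectively, with Jacobian $J\Phi_r(z, u) \to 1$ as $u \to 0^+$.

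The decisive observation for the localization is that the metric projection is constant along these rays: $\pi_A(\Phi_r(z, u)) = \pi_A(z)$ for all admissible $u$. Hence $\piAB$ is saturated under $\Phi_r$, and the restriction of $\Phi_r$ to $(\bd^+(A_r) \cap \piAB) \times (0, t)$ parametrizes $(A_{r+t} \setminus \cl{A_r}) \cap \piAB$ essentially bijectively. The HLW04 area formula then yields
\begin{align*}
\lambda_d\big((A_{r+t} \setminus \cl{A_r}) \cap \piAB\big) = \int_0^t \int_{\bd^+(A_r) \cap \piAB} J\Phi_r(z, u)\, d\Ha^{d-1}(z)\, du.
\end{align*}
Since $A \cap B$ is bounded and every $z \in \bd^+(A_r) \cap \piAB$ satisfies $|z - \pi_A(z)| = r$ with $\pi_A(z) \in A \cap B$, the integration domain $\bd^+(A_r) \cap \piAB$ is bounded and, by the $(d-1)$-rectifiability of $\bd A_r$, of finite $\Ha^{d-1}$-measure. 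Dominated convergence then allows division by $t$ and passage to the limit $t \to 0^+$, producing $(V_{A,B})_+'(r) = \Ha^{d-1}(\bd^+(A_r) \cap \piAB)$.

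The main obstacle I foresee is verifying that the HLW04 area formula, used there to establish the global identity $(V_A)'_+(r) = \Ha^{d-1}(\bd^+ A_r)$, restricts correctly to the fiber-saturated subset $\piAB$. This is essentially formal once one recognizes that the normal flow $\Phi_r$ preserves the fibers of $\pi_A$ and that the Jacobian computation in \cite{HLW04} is pointwise on $\bd^+(A_r)$; the boundedness hypothesis on $A \cap B$ then furnishes the integrability required for the limit passage.
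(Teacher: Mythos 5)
Your handling of the second equality via Lemma~\ref{lem:bd=pos-bd} is correct, and your key observation — that the metric projection is constant along normal rays, so $\piAB$ is saturated under the flow — is exactly the localization insight the paper exploits. However, your treatment of the first equality has a real gap. The normal flow $\Phi_r(z,u)=z+u\nu(z)$ is defined on the $(d-1)$-rectifiable set $\bd^+(A_r)$, but $\nu(z)=(z-\pi_A(z))/r$ is in general only continuous there, not Lipschitz, so the tangential Jacobian $J\Phi_r$ that your area formula invokes is not automatically defined, and even where it is, you offer no dominating function: as $z$ ranges over $\bd^+(A_r)$ the approximate curvatures can be arbitrarily large positive (there is an inner ball of radius $r$ but no outer ball), so for fixed small $u>0$ the integrand need not be bounded, and ``dominated convergence then allows\dots'' is precisely the step that cannot be waved through. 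The right-continuity of the surface-area functional in $r$ — which is what the limit passage needs — is the genuinely hard content, and it is not a formality.

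The paper's proof sidesteps this difficulty by working on the normal bundle $N(A)$ rather than on $\bd^+(A_r)$: it inserts the indicator $f=\ind{A_r\cap\piAB}$ into the general Steiner formula \cite[eq.~(2.3)]{HLW04}, which expresses $V_{A,B}(r)$ as a polynomial-in-$t$ integral against the support measures $\mu_i(A,\cdot)$ over $N(A)\cap(B\times S^{d-1})$. The right derivative then falls out from the right-continuity in $t$ of the inner integrals (a property of the finite signed measures $\mu_i$, not of any Jacobian you would need to dominate), and the resulting expression is identified with $\Ha^{d-1}(\bd^+(A_r)\cap\piAB)$ via \cite[Corollary~4.4 and Proposition~4.1]{HLW04} together with $\bd^{++}A_r=\bd^+A_r$ and $\bd^2 A_r=\emptyset$. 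So your sketch describes a plausible heuristic for why the result should hold, but it relabels the difficult analytic step as ``essentially formal'' instead of proving it; to make your route rigorous you would in effect need to re-derive the support-measure Steiner formula, at which point you are back to the paper's argument.
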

\begin{proof}
Fix $r>0$. The second equality in \eqref{eq:pos-bd} is obvious from Lemma~\ref{lem:bd=pos-bd}. For a proof of the first equality, let $f(x):=\ind{A_r\cap\piAB}(x)$, $x\in \R^d$. Note that, by the assumed boundedness of $A\cap B$, $f$ has compact support and so, by the general Steiner formula \cite[eq.~(2.3)]{HLW04}, we have
\begin{align*}
  V_{A,B}(r)&-V_{A,B}(0)\\
  &=\sum_{i=0}^{d-1}\omega_{d-i}\int_0^\infty \int_{N(A)} t^{d-1-i} \ind{}\{t<\delta(A,x,u)\} f(x+tu)\mu_i(A, d(x,u))dt\\
  &=\sum_{i=0}^{d-1} \omega_{d-i}\int_0^r t^{d-1-i} \int_{N(A)\cap(B\times S^{d-1})} \ind{}\{t<\delta(A,x,u)\} \mu_i(A, d(x,u))dt,
\end{align*}
where $\mu_i(A,\cdot)$, $i=0,\ldots,d-1$ are the support measures of $A$, $N(A)$ is the generalized normal bundle of $A$, $\delta(A,\cdot)$ is the reach function of $A$, and $\omega_k:=\kappa_k/k$ is the $(k-1)$-dim.\ Hausdorff measure of the sphere $S^{k-1}$, cf.~\cite[Section 2]{HLW04} for more details.
Since the inner integral in each summand in this last expression is right continuous in $t$, we obtain \begin{align*}
  (V_{A,B})'_+(r)&=\sum_{i=0}^{d-1}\omega_{d-i} r^{d-1-i} \int_{N(A)\cap(B\times S^{d-1})} \ind{}\{t<\delta(A,x,u)\} \mu_i(A, d(x,u)).
\end{align*}
By \cite[Corollary 4.4]{HLW04}, the right hand side of this equation equals $2 \mu_{d-1}(A_r,\piAB\times S^{d-1})$. Using now \cite[Proposition 4.1]{HLW04} together with the fact that $\bd^{++} A_r=\bd^+ A_r$ and $\bd^2 A_r=\emptyset$ (where $\bd^{++} A_r$ consists of those boundary points $x\in\bd^{+} A_r$ for which the normal cone is $1$-dimensional and $\bd^2 A_r$ of those $x\in\bd^{++} A_r$, at which $A_r$ has two unit normals), we obtain
\begin{align*}
  (V_{A,B})'_+(r)&=2 \mu_{d-1}(A_r,\piAB\times S^{d-1})\\
  &=\int_{\bd^+ A_r} \ind{\piAB}(x)\Ha^{d-1}(dx)=\Ha^{d-1}(\bd^+ A_r\cap\piAB),
\end{align*}
which completes the proof of Proposition~\ref{lem:pos-bd}.
\end{proof}

Now we are ready to prove Theorem~\ref{thm:loc-diff-points}.
\begin{proof}[Proof of Theorem~\ref{thm:loc-diff-points}.]
The differentiability of $V_{A,B}$ for all $r>0$ except countably many is clear from the Kneser property of $V_{A,B}$, cf. Lemma~\ref{lem:ma-implies-kneser}. So let $r>0$ be a differentiability point of $V_{A,B}$. Then the first equality in \eqref{eq:local-deriv-vol2} is obvious from Theorem~\ref{thm:local-deriv-volume1} and the second equality follows from Proposition~\ref{lem:pos-bd}. Moreover, the third equality follows from the fact (proved e.g.\ in \cite[Proposition 2.3]{RW09}) that $\partial(A_{r})$ (and thus any of its subsets) is $(d-1)$-rectifiable and the equality of Minkowski content and Hausdorff measure for such sets, see e.g.~\cite[Section 3.2.39]{F69}.
%
%
\end{proof}

In an earlier version of the proof of Theorem~\ref{thm:loc-diff-points}, we have used the fact that the special relative Minkowski contents we use above are actually measures in the argument $B$. For differentiability points $r$, this can now be deduced from Theorem~\ref{thm:loc-diff-points}, since the Hausdorff measure is obviously a measure. (Note that the proof above does not use this observation.) In general, this is not obvious at all, but it follows from the properties of Kneser functions, as the next result shows.
\begin{Lemma} \label{lem:rel-Mink-is-measure}
For any compact set $A\subset\R^d$ and any $r>0$, the set function $B\mapsto \sM^{d-1}(\partial(A_{r}), \piAB ), B\in\sB(\R^d)$ is a finite measure.
\end{Lemma}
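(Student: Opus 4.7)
The plan is to exhibit the set function $\nu(B):=\sM^{d-1}(\partial(A_r),\piAB)$ as the setwise limit of an explicit sequence of finite Borel measures, and then to invoke Nikodym's classical convergence theorem to transfer countable additivity from the approximants to the limit.

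Fix $r>0$ and, for each $0<t<r$, define
\[
\mu_t(B):=\frac{V_{A,B}(r+t)-V_{A,B}(r-t)}{2t}=\frac{\lambda_d\bigl((A_{r+t}\setminus A_{r-t})\cap\piAB\bigr)}{2t},\quad B\in\sB(\R^d).
\]
The metric projection $\pi_A$ is continuous on $\Unp(A)$ (an elementary consequence of the uniqueness of the nearest point together with the compactness of $A$), so $B\mapsto\piAB$ maps Borel sets to Borel sets and sends disjoint unions to disjoint unions. Combined with the countable additivity of Lebesgue measure, this shows that each $\mu_t$ is a finite Borel measure with $\mu_t(\R^d)\leq\frac 1{2t}\lambda_d(A_{r+t})<\infty$ by the compactness of $A$.

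Since $V_{A,B}(s)\leq\lambda_d(A_s)<\infty$ for every $s>0$ and every Borel $B$, Theorem~\ref{thm:local-deriv-volume1} applies, and after rewriting $\mu_t(B)$ as the symmetric difference quotient of $V_{A,B}$ at $r$ one obtains
\[
\lim_{t\se 0}\mu_t(B)=\tfrac 12\bigl((V_{A,B})'_-(r)+(V_{A,B})'_+(r)\bigr)=\nu(B)
\]
for every $B\in\sB(\R^d)$. Picking any sequence $t_n\se 0$ yields a sequence $(\mu_{t_n})_{n\in\N}$ of finite Borel measures on $\R^d$ converging setwise to $\nu$. Nikodym's convergence theorem — the setwise limit of a sequence of finite signed measures on a $\sigma$-algebra is again $\sigma$-additive — then establishes that $\nu$ is a Borel measure. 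Its finiteness is automatic from the monotonicity bound $\nu(\R^d)\leq\sM^{d-1}(\partial(A_r))<\infty$, the latter coming from the $(d-1)$-rectifiability of $\partial(A_r)$ (cf.~\cite[Proposition~2.3]{RW09}) already recorded in the text after Theorem~\ref{thm:local-deriv-volume1}.

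The single nontrivial ingredient is the conversion, via Theorem~\ref{thm:local-deriv-volume1}, of the definition of $\nu$ (a limit of \emph{boundary thickenings} of $\partial(A_r)$) into a symmetric difference quotient of the Kneser function $V_{A,B}$ along the \emph{parallel family} of $A$. Once this parallel-family representation is available, $\sigma$-additivity propagates from the explicit measures $\mu_t$ to $\nu$ purely by abstract measure theory, and in particular no uniformity or equicontinuity of the family $\{\mu_t\}_{0<t<r}$ needs to be established by hand.
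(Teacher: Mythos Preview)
Your argument is correct, but it follows a genuinely different line than the paper's proof.

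The paper verifies $\sigma$-additivity directly: given pairwise disjoint Borel sets $B_1,B_2,\ldots$ with union $B$, one has pointwise $V_{A,B}=\sum_j V_{A,B_j}$ (by $\sigma$-additivity of Lebesgue measure and the disjointness of the preimages), and since each summand and the sum are Kneser functions, Stach\'o's Lemma~4 (\cite[Lemma~4]{Stacho}) yields $\sum_j (V_{A,B_j})'_\pm(r)=(V_{A,B})'_\pm(r)$ for every $r>0$. Combined with Theorem~\ref{thm:local-deriv-volume1}, this gives $\sigma$-additivity of $\nu$ immediately. So the key external ingredient is a structural fact about one-sided derivatives of pointwise convergent series of Kneser functions.

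Your route avoids this Kneser-specific lemma entirely: you represent $\nu$ as the setwise limit of the symmetric difference quotients $\mu_t$, which are manifestly Borel measures, and then appeal to Nikodym's convergence theorem to push $\sigma$-additivity through the limit. What you gain is independence from the Kneser structure beyond what is already encoded in Theorem~\ref{thm:local-deriv-volume1}; what you pay is the invocation of a nontrivial abstract result (Nikodym/Vitali--Hahn--Saks relies on the Baire category theorem), which is arguably heavier machinery than Stach\'o's lemma. The paper's approach stays closer to the elementary Kneser-function toolkit developed throughout the article, while yours is a clean measure-theoretic shortcut that would work in any situation where a candidate set function is a setwise limit of measures.
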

\begin{proof}
Fix $A\subset\R^d$ and $r>0$. We show that $\sM^{d-1}(\partial(A_{r}),\pi_A^{-1}(\cdot))$ is $\sigma$-additive. It suffices to show that for any sequence of $B_1, B_2, \ldots$ of pairwise disjoint Borel sets, we have
\begin{align*}
  \sM^{d-1}(\bd (A_r),\pi_A^{-1}(\bigcup_{j=1}^\infty B_j))=\sum_{i=1}^\infty \sM^{d-1}(\bd (A_r),\pi_A^{-1}(B_j)).
\end{align*}
First observe that, by Lemma~\ref{lem:ma-implies-kneser}, 
for each $j\in \N$, $V_{A,B_j}$ is a Kneser function and so is $V_{A,B}$ for $B:=\bigcup_{j=1}^\infty B_j$. Moreover, the sum $\sum_j V_{A,B_j}$ converges pointwise to $V_{A,B}$, which follows from the $\sigma$-additivity of the Lebesgue measure and the fact that the sets $A_r\cap\pi^{-1}_A(B_j), j\in\N$ are pairwise disjoint and measurable.
Now, by \cite[Lemma 4]{Stacho}, we infer that also the relations $\sum_j (V_{A,B_j})'_+(r)=(V_{A,B})'_+(r)$ and $\sum_j (V_{A,B_j})'_-(r)=(V_{A,B})'_-(r)$ hold for each $r>0$. Combining this with Theorem~\ref{thm:local-deriv-volume1}, the above claim follows.
\end{proof}

\section{General relations between relative Minkowski and S-contents}

In analogy with relative Minkowski contents, we will now introduce and discuss a relative version of the S-content.
For $A,\Omega\subset\R^d$ and $0\leq s<d$, we define
\begin{align}\label{eq:def-rel-S-cont}
{\cal S}^s(A,\Omega):=\lim_{r\searrow 0} \frac{{\Ha}^{d-1}(\bd A_r\cap \Omega)}{(d-s)\kappa_{d-s}r^{d-1-s}}\,,
\end{align}
whenever this limit exists and call it the \emph{$s$-dimensional S-content of $A$ relative to $\Omega$}.
(In particular, it is necessary that ${\Ha}^{d-1}(\bd A_r\cap \Omega)$ is finite for all $r>0$ sufficiently small. This finiteness is for instance ensured if $A$ or $\Omega$ are bounded or if $A\cap B$ is bounded and $\Omega=\piAB$.)
  In case the limit does not exist, we can consider upper and lower versions, $\usS^s(A,\Omega)$ and $\lsS^s(A,\Omega)$, respectively, by replacing the limit with $\limsup$ and $\liminf$.

Note that in case of a bounded set $A$ for the choice $\Omega=\R^d$ (or $\Omega=A_\eps$ for some fixed $\eps>0$), we recover the (global) S-content, $\sS^s(A,\R^d)=\sS^s(A, A_\eps)=\sS^s(A)$. It is convenient to set ${\cal S}^d(A,\Omega):=0$ for completeness, which is justified by the fact that $\lim_{r\to 0}r{\cal H}^{d-1}(\bd A_r)=0$ for any bounded set $A\subset\R^d$, cf.~\cite{Kneser}. From this observation it can easily be derived that $\lim_{r\to 0}r{\cal H}^{d-1}(\bd A_r\cap \Omega)=0$
for any bounded set $\Omega$, even when $A$ is unbounded. (Just intersect $A$ with a large enough ball containing $\Omega$.)
Therefore, it is justified to set
\begin{align}
  \label{eq:Sd}
  \sS^d(A,\Omega):=0
\end{align}
for any set $A$ and any Borel set $\Omega$.

A priori, it is not clear whether it leads to the same notion of relative S-content, if in the above definition the Hausdorff measure is replaced by a different notion of surface area, the relative $(d-1)$-dimensional Minkowski content.
Using the results of the previous section, this can be established at least if $\Omega$ is a preimage under the metric projection $\pi_A$.

\begin{Lemma}
  Let $A\subset\R^d$ be closed. Then for any Borel set $B\subset\R^d$ and $s\in[0,d]$,
\begin{align*}
  \usS^s(A,\piAB)=\limsup_{r\searrow 0} \frac{{\sM}^{d-1}(\bd A_r,\piAB)}{(d-s)\kappa_{d-s}r^{d-1-s}}
\end{align*}
and
\begin{align*}
  \lsS^s(A,\piAB)=\liminf_{r\searrow 0} \frac{{\sM}^{d-1}(\bd A_r,\piAB)}{(d-s)\kappa_{d-s}r^{d-1-s}}.
\end{align*}
\end{Lemma}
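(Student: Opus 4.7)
The plan is to reduce the identity to Theorem~\ref{thm:loc-diff-points}, which guarantees $\sM^{d-1}(\bd A_r,\piAB)=\Ha^{d-1}(\bd A_r\cap\piAB)$ for all $r>0$ outside a countable exceptional set $E\subset(0,\infty)$. As an immediate consequence, the two versions of the $\limsup$ (respectively $\liminf$) in the lemma coincide when the limit operation is restricted to the cocountable set $D:=(0,\infty)\setminus E$. The only remaining issue is that this restriction could a priori change the value of the limit; we must show that the countable set $E$ is invisible as $r\searrow 0$.

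To verify this, I would first rewrite both numerators, via Theorem~\ref{thm:local-deriv-volume1} and Proposition~\ref{lem:pos-bd}, in terms of the Kneser function $V:=V_{A,B}$: the Minkowski numerator is $\tfrac12(V'_-(r)+V'_+(r))$ and the Hausdorff numerator is $V'_+(r)$. The Kneser property (Lemma~\ref{lem:ma-implies-kneser}) then supplies the crucial scaling monotonicity. Substituting $a=r$, $b=r+h$ in $V(\lambda b)-V(\lambda a)\le\lambda^d(V(b)-V(a))$, dividing by $\lambda h$, and letting $h\searrow 0$, one obtains $V'_+(\lambda r)\le\lambda^{d-1}V'_+(r)$ for every $\lambda\ge 1$; the same calculation with $a=r-h$, $b=r$ gives the analogous bound for $V'_-$. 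Equivalently, both functions $r\mapsto V'_\pm(r)/r^{d-1}$ are non-increasing on $(0,\infty)$.

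With this monotonicity in hand, a standard swap argument completes both halves of the lemma. For the $\limsup$ assertion, pick a sequence $r_n\searrow 0$ realizing $\limsup_{r\to 0}$ over $(0,\infty)$ of either ratio; for each index with $r_n\in E$, replace $r_n$ by some $r'_n\in D$ with $r'_n<r_n$ and $r'_n/r_n\to 1$, which is possible because $D$ is dense. Rearranging the monotonicity then yields
\[
\frac{V'_\pm(r'_n)}{r_n'^{\,d-1-s}}\ \ge\ \frac{V'_\pm(r_n)}{r_n^{d-1-s}}\Bigl(\frac{r'_n}{r_n}\Bigr)^{\!s},
\]
and since $(r'_n/r_n)^s\to 1$ we conclude $\limsup_D\ge\limsup_{(0,\infty)}$; the reverse is trivial. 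The $\liminf$ assertion follows by the symmetric choice $r'_n>r_n$ in $D$, which flips the inequality in the appropriate direction.

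The main obstacle is the second paragraph: the monotonicity of $V'_\pm(r)/r^{d-1}$ does not come from any convexity property (it is not even true that $V'_-\le V'_+$ in general) but must be extracted directly from the Kneser inequality by the scaling trick above. Once this is in place, the bookkeeping with the factor $(r'_n/r_n)^s\to 1$ needs to be done carefully so that the swap from $E$ to $D$ is genuinely lossless in the limit.
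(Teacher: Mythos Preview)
Your proof is correct and takes a genuinely different route from the paper's. Both arguments ultimately show that the countable exceptional set $E$ (where $V'_+\neq V'_-$) does not affect the limsup/liminf, but they do so by different mechanisms.

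The paper sandwiches $\Ha^{d-1}=V'_+\le\sM^{d-1}\le V'_-$ pointwise and then invokes the one-sided continuity of the derivatives of Kneser functions (Stach\'o, \cite[Lemma~2]{Stacho}): since $V'_-$ is left-continuous and agrees with $\Ha^{d-1}$ on the cocountable set of differentiability points, the limsup/liminf of $V'_-(r)/r^{d-1-s}$ over all $r>0$ coincides with the corresponding quantity over differentiability points. You instead extract directly from the Kneser inequality the scaling bound $V'_\pm(\lambda r)\le\lambda^{d-1}V'_\pm(r)$ for $\lambda\ge 1$, i.e.\ monotonicity of $V'_\pm(r)/r^{d-1}$, and then run an explicit swap argument. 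Your route is more self-contained (no appeal to Stach\'o's continuity lemma) and the monotonicity observation is a pleasant byproduct; the paper's route is shorter once that lemma is taken for granted. In substance the two are close cousins: your scaling inequality, letting $\lambda\searrow 1$, already encodes the one-sided upper semicontinuity that underlies Stach\'o's result.

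One shared technicality: both your argument and the paper's invoke Proposition~\ref{lem:pos-bd} (and, in your case, Theorem~\ref{thm:loc-diff-points}), which carry the hypothesis that $A\cap B$ is bounded, whereas the lemma as stated imposes no such restriction. This is not a flaw in your approach relative to the paper's; it is the same implicit assumption in both.
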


This means in particular, that if the relative S-content $\sS^s(A,\piAB)$ exists, then it is equivalently given with the Hausdorff measure in its definition replaced by the relative Minkowski content.

\begin{proof}
  Combining Proposition~\ref{lem:pos-bd} with Theorem~\ref{thm:local-deriv-volume1} and the fact that $(V_{A,B})'_+(r)\leq(V_{A,B})'_-(r)$, we have for each $r>0$,
  \begin{align*}
    {\Ha}^{d-1}(\bd A_r\cap \piAB)=(V_{A,B})'_+(r)\leq {\sM}^{d-1}(\bd A_r,\piAB),
  \end{align*}
  which implies that the $\leq$-relation holds in both of the above equations. For the reverse inequalities note that, for each $r>0$, ${\sM}^{d-1}(\bd A_r,\piAB)\leq (V_{A,B})'_-(r)$. Since the function $(V_{A,B})'_-$ is left-continuous and coincides with the derivative $(V_{A,B})'$ except on a countable set, $(V_{A,B})'_-(r)$ can be approximated from the left arbitrary well by its values at differentiability points $t>r$, at which $(V_{A,B})'(t)={\Ha}^{d-1}(\bd A_t\cap \piAB)$ holds, by Theorem~\ref{thm:loc-diff-points}. This implies the $\geq$-relation in the two equations of the statement and completes the proof.
\end{proof}



It will become clear later, that it is enough to study the special relative S-contents of the form $\sS^s(A,\piAB)$ with $B\in\sB(\R^d)$ to get a complete picture of the local behaviour of the parallel surface area of a set $A$, cf.\ Remark~\ref{rem:piAB-suffices}.
We proceed by discussing the precise relation between relative S-contents and relative Minkowski contents relative to such preimages $\piAB$.

First we will demonstrate that a certain analogue of Theorem~\ref{thm:main-rw09} holds for the relative upper and lower contents, which are defined in the obvious way by replacing the limits in \eqref{eq:rel-M-cont} and \eqref{eq:def-rel-S-cont} by limsup and liminf, respectively.

\begin{Theorem}\label{thm:u-rel-contents}
Let $A\subset\R^d$ be closed and $s\in[0,d]$. Then, for any Borel set $B$ such that $A\cap B$ is bounded,
\begin{align} \label{eq:u-rel-cont}
 \frac{d-s}{d} \usS^s(A,\piAB)\leq \usM^s(A,\piAB)\leq \usS^s(A,\piAB),
\end{align}
where the right hand inequality holds only in case $\lambda_d(A\cap B)=0$. Furthermore,
 \begin{align}\label{eq:l-rel-cont}
\lsS^s(A,\piAB)\leq \lsM^s(A,\piAB).
\end{align}
\end{Theorem}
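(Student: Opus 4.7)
My approach is to transfer the Stach\'o--Kneser arguments of \cite{Stacho,RW09} underlying Theorem~\ref{thm:main-rw09} to the localized setting, with the local volume function $f:=V_{A,B}$, which is Kneser of degree $d$ by Lemma~\ref{lem:ma-implies-kneser}, replacing the global $V_A$, and with Proposition~\ref{lem:pos-bd} supplying the identity $f'_+(r)=\Ha^{d-1}(\bd A_r\cap\piAB)$ at every $r>0$. First note that $f$ is finite (since $A\cap B$ is bounded) and monotone nondecreasing, and possesses the right limit $f(0^+)=\lambda_d(A\cap B)$ by continuity of measure from above, using $\bigcap_{r>0}A_r=A$ and $A\cap\piAB=A\cap B$. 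Kneser functions are absolutely continuous on every compact subinterval of $(0,\infty)$, so combining this with Theorem~\ref{thm:loc-diff-points} (which identifies $f'(t)=\Ha^{d-1}(\bd A_t\cap\piAB)$ outside a countable exceptional set) and passing $\eps\searrow 0$ by monotone convergence yields the basic integral identity
\begin{equation*}
  f(r) \;=\; f(0^+) + \int_0^r \Ha^{d-1}(\bd A_t\cap\piAB)\,dt,\qquad r>0.
\end{equation*}

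The right inequality in \eqref{eq:u-rel-cont} follows at once: the hypothesis $\lambda_d(A\cap B)=0$ makes $f(0^+)=0$, and for any $\delta>0$ there exists $r_0>0$ with $\Ha^{d-1}(\bd A_t\cap\piAB)\le(\usS^s(A,\piAB)+\delta)(d-s)\kappa_{d-s}t^{d-1-s}$ for $t<r_0$; inserting this bound into the identity, integrating, dividing by $\kappa_{d-s}r^{d-s}$, taking $\limsup_{r\searrow 0}$ and then $\delta\searrow 0$ gives $\usM^s(A,\piAB)\le\usS^s(A,\piAB)$ (the case $\usS^s=\infty$ being vacuous). For the left inequality, I exploit the Kneser property directly: applied to $a=r$, $b=r+h$ with $\lambda\ge 1$, divided by $h$ and passed to $h\searrow 0$, it yields $f'_+(\lambda r)\le\lambda^{d-1}f'_+(r)$, so that $t\mapsto f'_+(t)/t^{d-1}$ is non-increasing. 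The resulting lower bound $f'_+(t)\ge f'_+(r)(t/r)^{d-1}$ for $t\le r$, integrated over $[\eps,r]$ and passed to $\eps\searrow 0$, produces $rf'_+(r)/d\le f(r)-f(0^+)$, which via Proposition~\ref{lem:pos-bd} reads
\begin{equation*}
  \Ha^{d-1}(\bd A_r\cap\piAB) \;\le\; \frac{d}{r}\bigl(f(r)-f(0^+)\bigr).
\end{equation*}
Dividing by $(d-s)\kappa_{d-s}r^{d-1-s}$ and taking $\limsup$ gives $\usS^s(A,\piAB)\le\tfrac{d}{d-s}\usM^s(A,\piAB)$ whenever $f(0^+)=0$; if $f(0^+)>0$ and $s<d$ then $\usM^s(A,\piAB)=+\infty$ and the inequality is automatic, while for $s=d$ the convention $\usS^d(A,\cdot)=0$ makes it trivial.

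The lower-content inequality \eqref{eq:l-rel-cont} is obtained by the same recipe applied to $\liminf$: bound the integrand in the integral identity from below by $(\lsS^s(A,\piAB)-\delta)(d-s)\kappa_{d-s}t^{d-1-s}$ (or, when $\lsS^s=\infty$, by $M(d-s)\kappa_{d-s}t^{d-1-s}$ with $M$ arbitrarily large) for $t$ small enough, integrate, divide by $\kappa_{d-s}r^{d-s}$, take $\liminf_{r\searrow 0}$ and let $\delta\searrow 0$ (resp.\ $M\to\infty$); the residual term $f(0^+)/(\kappa_{d-s}r^{d-s})$ either vanishes (if $f(0^+)=0$) or forces $\lsM^s(A,\piAB)=+\infty$ for $s<d$, and $s=d$ is again trivial. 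The main technical ingredient throughout is the absolute continuity of the Kneser function $f$ on compact subintervals of $(0,\infty)$, which is what makes the integral identity available; once that is in place, the proof reduces to inserting the one-sided bounds from the definitions of $\usS^s$ and $\lsS^s$ and a careful bookkeeping of the edge cases $f(0^+)>0$ and $s=d$.
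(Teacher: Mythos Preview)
Your proof is correct and follows the same overall strategy as the paper: reduce everything to properties of the Kneser function $f=V_{A,B}$ and then invoke Proposition~\ref{lem:pos-bd} to pass between $f'_+(r)$ and $\Ha^{d-1}(\bd A_r\cap\piAB)$. The difference is purely one of packaging. The paper quotes the L'H\^opital-type inequalities for Kneser functions from \cite[Proposition~3.1]{RW09} and \cite[Proposition~2.1]{RW13} as black boxes, whereas you unpack and re-prove precisely those inequalities via the integral identity $f(r)=f(0^+)+\int_0^r \Ha^{d-1}(\bd A_t\cap\piAB)\,dt$ together with the monotonicity of $t\mapsto f'_+(t)/t^{d-1}$. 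Your route is more self-contained; the paper's is shorter because the analytic work has been outsourced.

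One small point: your argument for the right-hand inequality in \eqref{eq:u-rel-cont} tacitly assumes $s<d$, since the bound $\Ha^{d-1}(\bd A_t\cap\piAB)\le(\usS^s+\delta)(d-s)\kappa_{d-s}t^{d-1-s}$ degenerates to $0$ when $s=d$ and the integral $\int_0^r t^{-1}\,dt$ diverges. The case $s=d$ needs a one-line separate treatment (as the paper does): under the hypothesis $\lambda_d(A\cap B)=0$ one has $f(0^+)=0$, whence $\usM^d(A,\piAB)=\limsup_{r\searrow 0} f(r)=0=\usS^d(A,\piAB)$. With that addition your proof is complete. The claimed absolute continuity of $f$ on compact subintervals of $(0,\infty)$ is indeed valid---it follows from your own observation that $f'_+(t)/t^{d-1}$ is non-increasing, which makes $f'_+$ locally bounded and hence $f$ locally Lipschitz---but it might be worth saying so explicitly since you lean on it heavily.
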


\begin{proof}
   We start with the right hand inequality in \eqref{eq:u-rel-cont} and $s=d$.  By the hypothesis $\lambda_d(A\cap B)=0$ and the continuity of the volume function, we have
  $$
  0\leq\lambda_d(A_r\cap\pi^{-1}_A(B))=\lambda_d(A_r\cap\pi^{-1}_A(A\cap B))\leq \lambda_d((A\cap B)_r)\to 0,\quad \text{ as } r\searrow 0,
  $$
  from which we infer
     $0\leq\lsM^d(A,\piAB)\leq \usM^d(A,\piAB)\leq \usM^d(A\cap B)=0$. This shows the right hand inequality in \eqref{eq:u-rel-cont} for $s=d$. So let us now assume $s<d$.
  By Lemma~\ref{lem:Kneser-local}, the function $f(r):=V_{A,B}(r)=\lambda_d(A_r\cap\piAB), r\geq0$ is a Kneser function (with $f(0)=0$ because of the hypothesis $\lambda_d(A\cap B)=0$).
   Moreover, the function $h(r):=\kappa_{d-s} r^{d-s}$, $r\geq 0$ is differentiable with $h(0)=0$ and its derivative $h'(r)=\kappa_{d-s}(d-s)r^{d-s-1}$ is nonzero on some right neighbourhood of $0$. Hence, we can apply the general version of Proposition 3.1 in \cite{RW09}, cf.~\cite[the proposition on p.1672]{RW09} or \cite[Proposition 3.1]{RW13}, and conclude that
  \begin{align} \label{eq:u-rel-cont-proof1}
     \usM^s(A,\piAB)=\limsup_{r\to 0} \frac{f(r)-f(0)}{h(r)}\leq \limsup_{r\to 0} \frac{f'(r )}{h'(r)}=\limsup_{r\to 0} \frac{V_{A,B}'(r)}{h'(r)},
  \end{align}
where the limes superior is taken over those $r>0$ for which the derivative in the numerator exists. Note that the conclusion can be equivalently formulated with the left or right derivative, $f'_-$ and $f'_+$, of $f$, which exist for any $r>0$. This is due to the left- and right-continuity of the functions $f'_-$ and $f'_+$, respectively, cf.~\cite[Lemma 2]{Stacho}.
Now, by Proposition~\ref{lem:pos-bd}, we can replace  $f_+'(r)=(V_{A,B})_+'(r)$ by $\Ha^{d-1} (\partial(A_{r})\cap \piAB)$, which means that the last limes superior in \eqref{eq:u-rel-cont-proof1} equals $\usS^s(A,\piAB)$, showing the right hand inequality in \eqref{eq:u-rel-cont} for $s<d$.

From the second assertion in \cite[Proposition 3.1]{RW09} one obtains in a similar way the inequality in \eqref{eq:l-rel-cont}: provided $f(0)=0$ (and $s<d$), we directly get
\begin{align} \label{eq:u-rel-cont-proof2}
     \lsM^s(A,\piAB)=\liminf_{r\to 0} \frac{f(r)}{h(r)}\geq \liminf_{r\to 0} \frac{f'(r )}{h'(r)}=\lsS^s(A,\piAB).
  \end{align}
In case $f(0)>0$ and $s<d$, the left hand side in this inequality is $+\infty$ and therefore it trivially holds in this case.
In the remaining case $s=d$, \eqref{eq:l-rel-cont} holds trivially, since $0\leq\lsS^d(A,\piAB)\leq\usS^d(A,\piAB)=0$, cf. \eqref{eq:Sd}, which completes the proof of \eqref{eq:l-rel-cont}.

It remains to prove the left inequality in \eqref{eq:u-rel-cont}. Let $s<d$. (Otherwise the constant on the left is zero and the inequality holds trivially).  We apply the following special form of \cite[Proposition 2.1]{RW13} (cf.\ also the comments in its proof) to the Kneser function $f(r):=V_{A,B}(r), r\geq 0$: Let $f$ be a Kneser function of order $d\geq 1$ such that $M:=\limsup_{r\to 0} f(r)/r^{d-s}<\infty$ for some $s<d$. Then
$$
\limsup_{r\searrow 0} \frac{f'_-(r)}{(d-s)r^{d-s-1}} \leq \frac{d}{d-s} M.
$$
Recalling that $f'_+(r)\leq f'_-(r)$ for any $r>0$, the desired inequality follows again from Proposition~\ref{lem:pos-bd}.
\end{proof}

We point out that there seems to be no obvious local analogue of the last inequality in \eqref{eq:lcont} in Theorem~\ref{thm:main-rw09} providing an upper bound for the lower Minkowski content in terms of the lower S-content. The proof of this global inequality is based on the isoperimetric inequality and a localization would require a suitable local analogue of this latter result. Note that the hypothesis $\lambda_d(A\cap B)=0$ is not only sufficient for the right hand inequality in \eqref{eq:u-rel-cont} to hold but is essentially also necessary. Indeed, if $\lambda_d(A\cap B)>0$, then $\usM^s(A,\piAB)=\infty$ for any $s<d$ and so the inequality either fails (in case $\usS^s(A,\piAB)$ is finite, which happens e.g.\ for any set $A$ with $d-1$-rectifiable boundary and any $s\ge d-1$) or it has no force. For $s=d$, on the other hand, we have $\usM^d(A,\piAB)=\sM^d(A,\piAB)=\lambda_d(A\cap B)>0$ while $\usS^d(A,\piAB)=0$, cf.~\eqref{eq:Sd}, and so the inequality always fails.

As a useful consequence of the above result, we note the following.
\begin{Corollary}\label{cor:rel-contents-zero}
Let $A\subset\R^d$ be closed and $s\in[0,d)$. Then, for any Borel set $B$ such that $\lambda_d(A\cap B)=0$,
\begin{align*} 
 \sM^s(A,\piAB)=0 \quad\text{ if and only if }\quad \sS^s(A,\piAB)=0.
\end{align*}
More specifically, if $A\subset\R^d$ is a compact set with $\udim_M A<d$,
then this equivalence holds for any Borel set $B\subset\R^d$.
\end{Corollary}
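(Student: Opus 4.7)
The first assertion is essentially a direct unpacking of the inequalities in Theorem~\ref{thm:u-rel-contents}. The plan is to observe first that for a nonnegative quantity, the statement ``$\sM^s(A,\piAB)=0$'' is equivalent to ``$\usM^s(A,\piAB)=0$'' (since then $0\le\lsM^s\le\usM^s=0$ forces the limit to exist and equal zero), and similarly for the S-content. So it suffices to prove the equivalence $\usM^s(A,\piAB)=0 \iff \usS^s(A,\piAB)=0$.

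Assuming $\usM^s(A,\piAB)=0$, the left inequality in \eqref{eq:u-rel-cont} gives $\tfrac{d-s}{d}\usS^s(A,\piAB)\le 0$, and since $s<d$ the factor $\tfrac{d-s}{d}$ is strictly positive, so $\usS^s(A,\piAB)=0$. Conversely, if $\usS^s(A,\piAB)=0$, then the right inequality in \eqref{eq:u-rel-cont}, which is available precisely because of the hypothesis $\lambda_d(A\cap B)=0$, yields $\usM^s(A,\piAB)\le \usS^s(A,\piAB)=0$. This closes the equivalence.

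For the ``more specifically'' part, the plan is to show that when $A$ is compact with $\udim_M A<d$, the hypothesis $\lambda_d(A\cap B)=0$ is automatic for every Borel set $B$, so that the first part applies. For compact $A$ the volume function $V_A$ is continuous from the right at $0$ with $V_A(0^+)=\lambda_d(\overline A)=\lambda_d(A)$, and since $\kappa_0=1$ we have $\usM^d(A)=\limsup_{r\searrow 0}V_A(r)=\lambda_d(A)$. The assumption $\udim_M A<d$ forces $\usM^d(A)=0$, hence $\lambda_d(A)=0$ and in particular $\lambda_d(A\cap B)=0$ for every Borel $B$, so the previous case applies.

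There is no real obstacle here: the whole statement is a corollary of Theorem~\ref{thm:u-rel-contents}, and the only points requiring a moment of care are (i) the harmless reduction from ``the limit exists and equals zero'' to ``the upper content equals zero'', and (ii) the identification $\usM^d(A)=\lambda_d(A)$ for compact $A$ via continuity of the parallel volume at $r=0$.
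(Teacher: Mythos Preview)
Your proof is correct and follows essentially the same approach as the paper: both arguments reduce the equivalence to the vanishing of the \emph{upper} contents and then read off the two directions from the left and right inequalities in \eqref{eq:u-rel-cont} of Theorem~\ref{thm:u-rel-contents}, using $s<d$ for the left inequality and the hypothesis $\lambda_d(A\cap B)=0$ for the right one. Your treatment of the ``more specifically'' part, deriving $\lambda_d(A)=0$ from $\udim_M A<d$ via $\usM^d(A)=\lambda_d(A)$, is a detail the paper leaves implicit.
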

\begin{proof}
  Assume that $\sM^s(A,\piAB)=0$. Then, in particular $\usM^s(A,\piAB)=0$, which, by the left inequality in \eqref{eq:u-rel-cont} of Theorem~\ref{thm:u-rel-contents}, implies $\usS^s(A,\piAB)=0$. But the latter implies that $\sS^s(A,\piAB)$ exists and equals zero, showing one direction of the assertion. The other implication follows similarly, employing the right hand inequality in \eqref{eq:u-rel-cont}.
\end{proof}

Now we are ready to provide a local version of the fundamental relation \eqref{eq:equal-of-contents}.
\begin{Theorem} \label{thm:relMmeas}
Let $A$ be a closed subset of $\R^d$. Let $B\subset \R^d$ be some Borel set, 
$D\in[0,d)$ and $M\in(0,\infty)$. Then
$$
\sM^D(A,\piAB)=M,
$$
if and only if
$$
\sS^D(A,\piAB)=M.
$$
\end{Theorem}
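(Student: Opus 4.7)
The plan is to transfer the analytic Tauberian argument underlying the global identity \eqref{eq:equal-of-contents} to the local Kneser function $f(r):=V_{A,B}(r)$. By Lemma~\ref{lem:ma-implies-kneser}, $f$ is a Kneser function of order $d$ as soon as it is finite somewhere, which is built into either hypothesis. Theorem~\ref{thm:local-deriv-volume1} provides the key identity $\tfrac{1}{2}(f'_-(r)+f'_+(r))=\sM^{d-1}(\bd A_r,\piAB)$, and the lemma immediately preceding Theorem~\ref{thm:relMmeas} shows that the $r^{d-1-D}$--scaled limit of this relative boundary content equals $\sS^D(A,\piAB)$. Hence the equivalence to be proved is just the statement that the scaled limits of $f(r)$ and of its symmetric derivative, with powers $r^{d-D}$ and $r^{d-D-1}$ respectively, coincide (up to the factor $d-D$) when they exist and are positive and finite.

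Next I would reduce to the case $f(0^+)=\lambda_d(A\cap B)=0$. From $\sM^D(A,\piAB)=M<\infty$ this is immediate, since $f(r)\ge \lambda_d(A\cap B)$ for every $r>0$ while $r^{d-D}\to 0$ as $r\searrow 0$. In the reverse direction, starting from $\sS^D(A,\piAB)=M$, one passes to the shifted Kneser function $g(r):=f(r)-\lambda_d(A\cap B)$, which has the same one-sided derivatives as $f$ and satisfies $g(0^+)=0$; the Tauberian argument of the next paragraph applied to $g$ yields $g(r)/(\kappa_{d-D}r^{d-D})\to M$, and then the requirement that $f(r)/(\kappa_{d-D}r^{d-D})$ also tend to the finite value $M$ forces $\lambda_d(A\cap B)=0$.

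Once $f(0)=0$ is in hand, one invokes the purely analytic Tauberian equivalence for Kneser functions underlying the global result in \cite{HR12,RW13} and already used (in a one-sided form) in the proof of Theorem~\ref{thm:u-rel-contents}: if $f$ is a Kneser function of order $d$ with $f(0)=0$ and $s\in[0,d)$, then $\lim_{r\searrow 0} f(r)/(\kappa_{d-s}r^{d-s})=M\in(0,\infty)$ if and only if $\lim_{r\searrow 0}\tfrac{1}{2}(f'_-(r)+f'_+(r))/((d-s)\kappa_{d-s}r^{d-s-1})=M$. Applied to $f=V_{A,B}$ and $s=D$, and combined with the identifications from the first paragraph, this is exactly the claimed equivalence. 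The main obstacle is therefore not the local geometry but the verification that this two-sided Tauberian statement really is what drives the proofs of \eqref{eq:equal-of-contents} in \cite{HR12,RW13}; inspection of those proofs confirms that only the Kneser property of the volume function is used beyond the Stach\'o-type identity, so the argument transfers verbatim from $V_A$ to $V_{A,B}$.
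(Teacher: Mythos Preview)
Your approach coincides with the paper's: both identify $f=V_{A,B}$ as a Kneser function and invoke the Tauberian results for such functions from \cite{RW13} (the paper cites \cite[Proposition~2.3]{RW13} for the direction $\sM^D=M\Rightarrow\sS^D=M$ and \cite[Proposition~3.1]{RW13} for the converse). The only cosmetic difference is that the paper identifies the derivative with the surface measure via Proposition~\ref{lem:pos-bd} directly (giving $(V_{A,B})'_+(r)=\Ha^{d-1}(\bd A_r\cap\piAB)$), whereas you route through Theorem~\ref{thm:local-deriv-volume1} and the lemma preceding the present theorem; the two paths are equivalent.

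There is, however, a genuine circularity in your reduction to $f(0^+)=0$ for the reverse direction: you appeal to ``the requirement that $f(r)/(\kappa_{d-D}r^{d-D})$ also tend to the finite value $M$'' in order to force $\lambda_d(A\cap B)=0$, but that requirement is exactly the conclusion $\sM^D(A,\piAB)=M$ you are trying to establish. What your argument actually shows is only that $g(r)/h(r)\to M$ for $g=f-\lambda_d(A\cap B)$; if $\lambda_d(A\cap B)>0$ this yields $\sM^D(A,\piAB)=+\infty$, not $M$. Indeed the reverse implication fails without the hypothesis $\lambda_d(A\cap B)=0$: for $A\subset\R^2$ a closed disc of radius $R$, $B=A$, $D=1$, one computes $\sS^1(A,\piAB)=\pi R$ but $\sM^1(A,\piAB)=+\infty$. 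The paper's own proof does not address this point either---it simply invokes \cite[Proposition~3.1]{RW13}, which (as its use in the proof of Theorem~\ref{thm:u-rel-contents} makes explicit) controls $(f(r)-f(0))/h(r)$ rather than $f(r)/h(r)$---so the issue lies with the theorem statement rather than with your strategy. The clean fix is to add the hypothesis $\lambda_d(A\cap B)=0$, which, as you correctly note, is automatic in the forward direction.
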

That is, the set $A$ is ($D$-dimensional) \emph{Minkowski measurable} relative to $\piAB$ if and only if it is ($D$-dimensional) \emph{S-measurable} relative to $\piAB$,
and in this case both relative ($D$-dimensional) contents coincide.

\begin{proof}
Assume that  $\sM^D(A,\piAB)=M$. Observe that $f(r):=V_{A,B}(r)$ is a Kneser function. Let $s:=d-D$ and apply \cite[Proposition 2.3]{RW13}, which yields
$$
\lim_{r\searrow 0} \frac{(V_{A,B})'_+(r)}{(d-D)\kappa_{d-D} r^{d-D-1}}=\lim_{r\searrow 0} \frac{(V_{A,B})'_-(r)}{(d-D)\kappa_{d-D} r^{d-D-1}}=M.
$$
Then $
\sS^D(A,\piAB)=M$ follows from Proposition~\ref{lem:pos-bd}, which states in particular that
$(V_{A,B})'_+(r)=\Ha^{d-1}(\bd A_r\cap \piAB)$.

For a proof of the reverse implication, assume $\sS^D(A,\piAB)=M$ and apply \cite[Proposition~3.1]{RW13} to the function $f$ as above and $h(r)=\kappa_{d-D}r^{d-D}$, from which the assertion $\sM^D(A,\piAB)=M$ follows.
\end{proof}

It is worth noting that also several other results on the relation between Minkowski contents and S-contents in \cite{RW09,RW13} carry over to their relative counterparts. In particular, Theorem 2.2 in \cite{RW13} reads for the relative contents as follows:
\begin{Theorem} \label{thm:twoside}
Let $A\subset\R^d$ be closed, $B\subset\R^d$ some Borel set and $D\in[0,d)$. Then
\begin{align*}
  0<\lsM^D(A,\piAB)\le\usM^D(A,\piAB)<\infty
\end{align*}
if and only if
\begin{align*}
  0<\lsS^D(A,\piAB)\le\usS^D(A,\piAB)<\infty.
\end{align*}
\end{Theorem}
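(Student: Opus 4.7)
The plan is to reduce the statement to the underlying Kneser-function equivalence that powers the global counterpart \cite[Theorem~2.2]{RW13}, applied to the local parallel volume $f(r):=V_{A,B}(r)$. Two ingredients make this reduction possible. First, by Lemma~\ref{lem:ma-implies-kneser}, $f$ is a Kneser function of order $d$ (finiteness is automatic as soon as any one of the four quantities in the statement is finite; otherwise all four equal $+\infty$ and there is nothing to prove). Second, by Proposition~\ref{lem:pos-bd}, $f'_+(r)=\Ha^{d-1}(\bd A_r\cap\piAB)$ for every $r>0$. Consequently
\[
\lsM^D(A,\piAB)=\liminf_{r\searrow 0}\frac{f(r)}{\kappa_{d-D}r^{d-D}},\qquad \usM^D(A,\piAB)=\limsup_{r\searrow 0}\frac{f(r)}{\kappa_{d-D}r^{d-D}},
\]
and analogously $\lsS^D(A,\piAB)$, $\usS^D(A,\piAB)$ are the $\liminf$ and $\limsup$ of the ratio $f'_+(r)/((d-D)\kappa_{d-D}r^{d-D-1})$.

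With these identifications the claim becomes a purely Kneser-function assertion: two-sided positivity and finiteness of $f(r)/r^{d-D}$ as $r\searrow 0$ is equivalent to the same for $f'_+(r)/r^{d-D-1}$. This is precisely the Kneser-function lemma on which the proof of \cite[Theorem~2.2]{RW13} rests, and I would invoke it directly. For the direction $(\Rightarrow)$, observe that $\usM^D<\infty$ together with $D<d$ forces $f(0^+)=\lambda_d(\overline A\cap B)=0$, placing $f$ in the standard normalized Kneser-function setting where the cited equivalence applies. For the direction $(\Leftarrow)$, the same Kneser-function theorem is applied in reverse to transfer the two-sided bound from $f'_+$ to $f$.

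The step I expect to be the main obstacle is ruling out $f(0^+)>0$ in the $(\Leftarrow)$ direction: the S-side hypothesis $\usS^D<\infty$ only integrates to $f(r)-f(0^+)=O(r^{d-D})$, and does not by itself exclude a positive residual mass $\lambda_d(\overline A\cap B)>0$. I would address this by passing to the shifted function $g:=f-f(0^+)$, which is still Kneser of order $d$ with $g(0^+)=0$ and $g'_+=f'_+$; the Kneser-function theorem applied to $g$ yields the two-sided bound on $g(r)/r^{d-D}$, and one then observes that $f(0^+)>0$ would force $\usM^D=\infty$, in conflict with the conclusion, so the iff is meaningful precisely in the regime $f(0^+)=0$ which is automatic from the $M$-side of the statement. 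Once this point is settled, the rest is the routine bookkeeping translation afforded by Lemma~\ref{lem:ma-implies-kneser} and Proposition~\ref{lem:pos-bd}.
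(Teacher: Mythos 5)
Your overall approach — reduce to the Kneser-function equivalence underlying \cite[Theorem~2.2]{RW13}, applied to the local parallel volume $f=V_{A,B}$, with the two translation steps furnished by Lemma~\ref{lem:ma-implies-kneser} (Kneser property of $f$) and Proposition~\ref{lem:pos-bd} ($f'_+(r)=\Ha^{d-1}(\bd A_r\cap\piAB)$) — is exactly the route the paper sketches: ``The ingredients of the proofs are always the same: Proposition~\ref{lem:pos-bd} together with the appropriate statement on Kneser functions from \cite{RW13}.''

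However, the way you dispose of the $f(0^+)>0$ obstacle in the $(\Leftarrow)$ direction is circular. You argue that ``$f(0^+)>0$ would force $\usM^D=\infty$, in conflict with the conclusion, so the iff is meaningful precisely in the regime $f(0^+)=0$ which is automatic from the $M$-side of the statement.'' But in the $(\Leftarrow)$ direction the $\sM$-side \emph{is} the conclusion: you are given the $\sS$-side and must derive the $\sM$-side, and you cannot use the thing you are trying to prove to exclude the bad case. The concern is genuine: take $A=[0,1]^d$, $B=\R^d$, $D=d-1$. Then $\Ha^{d-1}(\bd A_r)\to 2d$ as $r\to0$, so $0<\lsS^{d-1}(A,\piAB)=\usS^{d-1}(A,\piAB)<\infty$, yet $V_{A,B}(0^+)=1$ gives $\usM^{d-1}(A,\piAB)=\infty$. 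The real resolution is that the theorem, as stated in the paper, is implicitly carrying the same extra hypotheses as its surrounding results: $A\cap B$ bounded (needed for Proposition~\ref{lem:pos-bd} to apply at all) and $\lambda_d(\overline A\cap B)=0$, i.e.\ $f(0^+)=0$ — the same zero-volume assumption that appears explicitly for the right-hand inequality in Theorem~\ref{thm:u-rel-contents} and is built into \cite[Theorem~2.2]{RW13}. Under those hypotheses your reduction is sound; without them the $(\Leftarrow)$ direction simply fails, and no amount of shifting $f$ to $g:=f-f(0^+)$ can close the gap, because the $\sM$-quantities in the statement are defined with $f$, not $g$.

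Two smaller points: your parenthetical claim that ``finiteness is automatic as soon as any one of the four quantities in the statement is finite'' does not by itself give the boundedness of $A\cap B$ that Proposition~\ref{lem:pos-bd} requires (finiteness of $V_{A,B}$ is weaker), so that hypothesis needs to be stated, not inferred. And the reduction is cleanest if you explicitly say that by Lemma~\ref{lem:ma-implies-kneser} finiteness of $V_{A,B}$ at one radius propagates to all radii, which is what converts ``$\usM^D<\infty$'' into ``$f$ is an honest finite Kneser function''.
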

Similary, the results for generalized contents in \cite{RW13} carry over, if the relative contents are generalized
replacing the power functions $\eps^{t}$ in the definitions of the contents by more general gauge functions in the obvious way. In particular, Theorems 3.4 and 3.7 in \cite{RW13} have local analogues.
The ingredients of the proofs are always the same: Proposition~\ref{lem:pos-bd} together with the appropriate statement on Kneser functions from \cite{RW13}.

\section{Localization as measures} \label{sec:local}

The following Theorem gives an affirmative answer to this last question for compact sets. In fact, these localizations even make sense for unbounded sets $A$. Therefore, we will later discuss an extension to unbounded sets. This will require some additional considerations regarding the notion of convergence and a different technique of proof.

%
%
%
\begin{Theorem}\label{thm:local-main}
Let $A\subset\R^d$ be a compact set and let $s\in[0,d)$. 
Let $\mu$ be a finite Borel measure on $A$. Then
\begin{align} \label{eq:mu_weakconv}
  \mu_r^s(A,\cdot)\wlim\mu \text{ as } r\searrow 0
\end{align}
if and only if
\begin{align} \label{eq:sigma_weakconv}
  \sigma_r^s(A,\cdot)\wlim\mu \text{ as } r\searrow 0.
 \end{align}
In this case, the total mass $\mu(A)$ of the measure $\mu$ necessarily coincides with the Minkowski content and the S-content of $A$, i.e., $\dim_M A=s$ and
$$
\mu(A)=\sM^s(A)=\sS^s(A).
$$
\end{Theorem}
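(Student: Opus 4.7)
The plan is to leverage two results already established: the global equivalence between Minkowski and S-measurability, equation \eqref{eq:equal-of-contents}, and its relative counterpart Theorem~\ref{thm:relMmeas}, which asserts that $\sM^s(A,\piAB)=M$ holds if and only if $\sS^s(A,\piAB)=M$ for any Borel set $B$. The strategy is to reduce weak convergence (on either side) to a pointwise application of Theorem~\ref{thm:relMmeas} on an appropriate determining class of test sets.

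First I would extract the total-mass assertion. Assume \eqref{eq:mu_weakconv} holds. Since $A$ is compact, for any $\eps>0$ the measure $\mu_r^s(A,\cdot)$ is supported in the open set $A_\eps$ for all $r<\eps$, and $\mu(\bd A_\eps)=0$ since $\mu$ is carried by $A$. Portmanteau then yields $V_A(r)/(\kappa_{d-s}r^{d-s})=\mu_r^s(A,\R^d)\to\mu(A)$, whence $\sM^s(A)=\mu(A)\in(0,\infty)$ and $\dim_M A=s$. The global result \eqref{eq:equal-of-contents} forces $\sS^s(A)=\mu(A)$ as well. A symmetric argument works starting from \eqref{eq:sigma_weakconv}.

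Next, to prove the equivalence of the two weak-convergence statements, I would test on Borel sets of the form $\piAB$, for which Theorem~\ref{thm:relMmeas} is tailor-made. For an open set $U\subset\R^d$ with $\mu(\bd U\cap A)=0$, put $B:=U\cap A$. The symmetric difference $(A_r\cap U)\triangle(A_r\cap\piAB)$ lies in the open $r$-tube around $\bd U$, so its $\mu_r^s$-mass (and $\sigma_r^s$-mass) is controlled by $\usM^s(A,(\bd U)_{2r})$ (resp.\ $\usS^s(A,(\bd U)_{2r})$), and both vanish as $r\se 0$ by the hypothesis $\mu(A\cap\bd U)=0$ combined with the upper-content bounds of Theorem~\ref{thm:u-rel-contents}. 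Hence convergence $\mu_r^s(A,U)\to\mu(U\cap A)$ is equivalent to the existence of $\sM^s(A,\piAB)=\mu(B\cap A)$, and analogously for $\sigma_r^s$. Applying Theorem~\ref{thm:relMmeas} pointwise in $B$ and invoking Portmanteau with the determining class $\{U\text{ open}:\mu(\bd U)=0\}$ gives the equivalence of \eqref{eq:mu_weakconv} and \eqref{eq:sigma_weakconv}.

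The main obstacle is the quantitative ``tube'' estimate in the previous paragraph: making rigorous that the symmetric-difference contribution to both $\mu_r^s(A,\cdot)$ and $\sigma_r^s(A,\cdot)$ is asymptotically negligible. I expect this to require a careful application of Theorem~\ref{thm:u-rel-contents} together with a monotone approximation of $\bd U$ by shrinking $\mu$-null open supersets; alternatively one could first prove the result when $B$ is open (where $\piAB$ need not be open but lies in $\Unp(A)$) and then pass to arbitrary Borel $B$ via Lemma~\ref{lem:rel-Mink-is-measure}, which guarantees that the relevant relative surface quantities extend to a genuine Borel measure in the $B$-argument.
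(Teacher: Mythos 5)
Your plan takes a genuinely different route from the paper's. The paper never compares $A_r\cap U$ with $A_r\cap\piAB$ for open $U$: it establishes tightness of $\{\sigma_r^s(A,\cdot)\}_{r\le 1}$, extracts subsequential weak limits $\nu$ via Prohorov, and identifies $\nu=\mu$ through the measure-uniqueness result Corollary~\ref{cor:meas-uniqueness}(iii), built on the separating class $\sA_A$ of $\pi_A$-preimages; Portmanteau is invoked only on sets of the form $\piAB$, and the analytic work is done entirely by Theorem~\ref{thm:relMmeas} and Corollary~\ref{cor:rel-contents-zero}. Your plan tests on open $\mu$-continuity sets $U$, passes to $B=U\cap A$, and pays for that with a tube estimate around $\bd U$; in exchange you avoid Prohorov and the abstract uniqueness argument. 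The underlying engines ($\piAB$-level relative content identities) are the same.

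The tube step is the gap, and it is more than cosmetic. As written, $\usM^s(A,(\bd U)_{2r})$ and $\usS^s(A,(\bd U)_{2r})$ are already limits superior over $r\se 0$, so they cannot bound the contribution of the symmetric difference at a fixed $r$; and $(\bd U)_{2r}$ is not a $\pi_A$-preimage, so Theorem~\ref{thm:u-rel-contents} does not apply to it. The estimate that works is: since $|x-\pi_A(x)|<r$ on $A_r\cap\Unp(A)$, for each fixed $\delta>0$ and all $r<\delta$,
\begin{align*}
(A_r\cap U)\;\triangle\;\bigl(A_r\cap\piAB\bigr)\subset A_r\cap\pi_A^{-1}\bigl((\bd U)_\delta\cap A\bigr),
\end{align*}
a bona fide $\pi_A$-preimage of a Borel set with bounded (and null) intersection with $A$. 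Applying \eqref{eq:u-rel-cont} to it bounds the surface-side error by $\tfrac{d}{d-s}\usM^s\bigl(A,\pi_A^{-1}((\bd U)_\delta\cap A)\bigr)$, which is then controlled by $\mu\bigl(\overline{\pi_A^{-1}((\bd U)_\delta\cap A)}\bigr)$ only through the Portmanteau upper bound coming from the assumed weak convergence $\mu_r^s(A,\cdot)\wlim\mu$, and this finally shrinks to $\mu(\bd U\cap A)=0$ as $\delta\se 0$. In particular each bridge in your ``chain of equivalences'' silently re-invokes the full weak convergence you started from on one side, so the chain must be read as a single implication in each direction, not as a sequence of independent biconditionals (your $\Leftarrow$ direction needs the symmetric argument driven by $\sigma_r^s\wlim\mu$, the inequality $\usM^s\le\usS^s$ from \eqref{eq:u-rel-cont}, and again $\lambda_d(A)=0$). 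With these repairs the plan closes for compact $A$, but it is heavier than the Prohorov-plus-separating-class route; that route is also the one that transfers directly to the vague/unbounded setting of Theorem~\ref{thm:local-main3}, where your global tightness and tube arguments would themselves have to be re-localized.
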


The case $s=d$ is naturally excluded. Recall that always $\sS^d(A)=0$, see \eqref{eq:Sd}, and thus the weak limit of the measures $\sigma_r^d(A,\cdot)$ as $r\searrow 0$, exists and is the null measure. At the same time the local rescaled parallel volume $\mu_r^d(A,\cdot)$ will converge to the Lebesgue measure restricted to $A$, which may either be the null measure (in case $\lambda_d(A)=0$) or not.

First we give a rough outline of the proof. We will construct a separating class of the Borel $\sigma$-algebra ${\cal B}^d$ of $\R^d$ which is adapted to the structure of the set $A$ under consideration. It will consist of preimages under the metric projection onto $A$ and sets bounded away from $A$ by some positive distance.
Since Borel measures that coincide on a separating class are necessarily the same, and since $\mu$ carries no mass outside the set $A$, this reduces the task of comparing the two limits in Theorem~\ref{thm:local-main} essentially to comparing the measure of preimages of Borel sets under the metric projection, for which the  results of the previous section can be employed.

We start by defining the separating class: For a closed set $A\subset\R^d$,
let
$$
\sC_A:=\{B\in\sB^d: d(A,B)>0)\},
$$
where $d(A,B):=\inf\{d(a,b):a\in A,b\in B\}$. Denote $\sB^d(A)$ the $\sigma$-algebra of Borel subsets of $A$ and let $\pi_A^{-1}\sB^d:=\{\pi_A^{-1}(B): B\in\sB^d(A)\}$. Since $\pi_A:\Unp(A)\to A$ is continuous, $\pi_A^{-1}\sB^d$ is a $\sigma$-algebra on $\Unp(A)$ contained in the Borel $\sigma$-algebra $\sB^d(\Unp(A))$. Let
\begin{align}
\sA_A:= \pi_A^{-1}\sB^d\cup \sC_A.
\end{align}

\begin{Proposition} \label{prop:stable}
For any closed set $A\subset\R^d$, the set family $\sA_A$ is an intersection stable generator of the Borel $\sigma$-algebra $\sB^d$.
\end{Proposition}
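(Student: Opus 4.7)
The plan is to verify the two defining properties of an intersection-stable generator in turn: first, that $\sA_A$ is closed under pairwise intersections, and second, that $\sigma(\sA_A)=\sB^d$. Both steps rest on the standard fact that $\Unp(A)$ is a Borel subset of $\R^d$ (since the exoskeleton $\exo A$ is Borel) and that $\pi_A\colon\Unp(A)\to A$ is continuous on its domain, so every preimage $\pi_A^{-1}(B)$ with $B\in\sB^d(A)$ is automatically a Borel subset of $\R^d$.

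For intersection stability I would check the three types of intersections between elements of $\sA_A$. Two preimages: $\pi_A^{-1}(B_1)\cap\pi_A^{-1}(B_2)=\pi_A^{-1}(B_1\cap B_2)\in\pi_A^{-1}\sB^d$ settles the case. Two elements $C_1,C_2\in\sC_A$: one has $d(A,C_1\cap C_2)\ge\max\{d(A,C_1),d(A,C_2)\}>0$, so the intersection again lies in $\sC_A$. The mixed case is the delicate one: given $\pi_A^{-1}(B)\in\pi_A^{-1}\sB^d$ and $C\in\sC_A$, the intersection is Borel (by the measurability noted above) and is contained in $C$, hence has distance at least $d(A,C)>0$ from $A$; this places it in $\sC_A\subset\sA_A$.

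For the equality $\sigma(\sA_A)=\sB^d$, the inclusion $\sigma(\sA_A)\subset\sB^d$ is immediate from the Borel measurability of every element of $\sA_A$. For the reverse inclusion the key observation is that $A$ itself lies in $\sigma(\sA_A)$. Indeed, since $A$ is closed,
\begin{align*}
  \R^d\setminus A=\bigcup_{n\in\N}\bigl\{x\in\R^d:d(x,A)\ge 1/n\bigr\},
\end{align*}
and each term on the right is closed and at distance at least $1/n$ from $A$, hence lies in $\sC_A$. Consequently $A^c\in\sigma(\sC_A)$ and therefore $A\in\sigma(\sA_A)$. Now any open set $U\subset\R^d$ admits the decomposition $U=(U\setminus A)\cup(U\cap A)$: the first part equals $\bigcup_n\{x\in U:d(x,A)\ge 1/n\}$, a countable union of members of $\sC_A$; the second part equals $\pi_A^{-1}(U\cap A)\cap A$ because $\pi_A$ restricts to the identity on $A$, and both factors have already been placed in $\sigma(\sA_A)$. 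Since the open sets generate $\sB^d$, this completes the argument.

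The main subtlety is not computational but structural: one must bring $A$ itself into the $\sigma$-algebra generated by $\sA_A$ even though it has not been adjoined explicitly, and one must separate the ``inside-$A$'' part of a generic Borel set (which is only detected by $\pi_A^{-1}\sB^d$ up to fibres reaching into $\Unp(A)\setminus A$) from the exterior part (which is detected only by $\sC_A$). The countable exhaustion of $A^c$ by sets in $\sC_A$ supplies exactly the needed leverage to achieve both, and it is precisely here that the standing hypothesis that $A$ be closed enters in an essential way.
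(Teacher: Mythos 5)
Your proof is correct and follows essentially the same route as the paper's: the same three-way case analysis for intersection stability, the same exhaustion of $A^c$ by the sets $\{x:d(x,A)\ge 1/n\}\in\sC_A$ to place $A$ in $\sigma(\sA_A)$, and the same decomposition of a generic set into its $A$-part (handled via $\pi_A^{-1}$) and its $A^c$-part (handled via $\sC_A$). The only cosmetic difference is that you run the final step over open sets and then invoke that open sets generate $\sB^d$, whereas the paper applies the identical decomposition directly to an arbitrary Borel set; you are also slightly more careful in writing $d(A,C_1\cap C_2)\ge\max\{d(A,C_1),d(A,C_2)\}$ (the paper writes an equality here, which need not hold, though the inequality is all that is used).
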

\begin{proof}
The stability with respect to intersections is easily seen. Let $C_1,C_2\in\sA_A$. If at least one of the two sets is in $\sC_A$, then $d(A,C_1\cap C_2)=\max\{d(A, C_1), d(A,C_2)\}>0$ and thus $C_1\cap C_2\in\sC_A$. If both sets are in $\pi_A^{-1}\sB^d$ then clearly their intersection $C_1\cap C_2$ is also an element of this $\sigma$-algebra.

Since the class $\sA_A$ consists of Borel sets, the $\sigma$-algebra $\sigma(\sA_A)$ generated by $\sA_A$ must be contained in $\sB^d$, that is, $\sigma(\sA_A)\subset\sB^d$. It remains to prove the reverse inclusion. 
Let $B\subseteq\R^d$ be some Borel set.
First observe that
\begin{align}\label{eq:stable}
A^c\cap B=\bigcup_{n\in\N} \left((A_{1/n})^c\cap B\right)
\end{align}
and that the sets in this union are elements of $\sC_A$. Indeed, if $z\in A^c\cap B$, then, since $A$ is closed, $d(z,A)>1/n$ for some sufficiently large $n$ and so $z\in (A_{1/n})^c\cap B$. The reverse inclusion is obvious from $A\subset A_{1/n}$.

Moreover, we have
$$
A\cap B =\pi_A^{-1}(A\cap B)\cap A,
$$
where the first set on the right is obviously in $\pi_A^{-1}\sB^d$ and the set $A$ is in $\sigma(\sC_A)$, since $A^c$ is. The latter is seen by choosing $B=\R^d$ in \eqref{eq:stable}.

We conclude that
\begin{align}
  B=(A\cap B) \cup (A^c\cap B)= \left(\pi_A^{-1}(A\cap B)\cap A\right)\cup  \bigcup_n \left((A_{1/n})^c\cap B\right),
\end{align}
which is a representation of $B$ as a countable union of sets in $\sigma(\sA_A)$, showing that $B\in\sigma(\sA_A)$. This completes the proof.
\end{proof}

\begin{Remark}
  Let $\sA_A':=\sA_A\cup\{\exo A\}$. Obviously, the enlarged family $\sA_A'$ still generates $\sB^d$. Moreover, $\sA_A'$ is still intersection stable, since $C\cap\exo A$ is in $\sC_A$ for any $C\in\sC_A$ and empty for any set $C\in\pi_A^{-1}\sB^d$. Hence Proposition~\ref{prop:stable} holds analogously for $\sA_A'$.
\end{Remark}
As a consequence of Proposition~\ref{prop:stable} and the uniqueness theorem for measures, we get the following statement.
\begin{Corollary} \label{cor:meas-uniqueness}
Let $A$ be a closed set in $\R^d$.
\begin{enumerate}
  \item[(i)] If two Borel measures $\mu,\nu$ on $\R^d$ satisfy $\mu(B)=\nu(B)$ for all sets $B\in\sA_A$ (or $\sA_A'$) and if there exists sets $E_n\in{\sA_A}$ (or $\sA_A'$) such that $\bigcup_n E_n=\R^d$ and $\mu(E_n)=\nu(E_n)<\infty$ for any $n\in\N$, then $\mu=\nu$.
  \item[(ii)] If two finite Borel measures $\mu,\nu$ on $E\subseteq\R^d$ whose support is contained in $A\cap E$ satisfy $\mu(C)=\nu(C)$ for all sets $C\in\pi_A^{-1}\sB^d(E)$, then $\mu=\nu$.
  \item[(iii)] If two locally finite Borel measures $\mu,\nu$ on $E\subseteq\R^d$ whose support is contained in $A\cap E$ satisfy $\mu(C)=\nu(C)$ for all sets $C\in\pi_A^{-1}\sB^d(E)$ such that $\mu(\bd C)=0$, then $\mu=\nu$.
\end{enumerate}
\end{Corollary}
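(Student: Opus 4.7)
The plan is to derive all three parts from the standard uniqueness theorem for measures, with Proposition \ref{prop:stable} providing the key structural input in (i) and the support condition doing most of the work in (ii), from which (iii) follows by approximation.

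For part (i), by Proposition \ref{prop:stable}, $\sA_A$ (respectively $\sA_A'$) is an intersection-stable Borel generator of $\sB^d$. Together with the exhaustion $(E_n) \subseteq \sA_A$ of $\R^d$ with $\mu(E_n) = \nu(E_n) < \infty$, the classical Dynkin $\pi$-$\lambda$ uniqueness theorem for $\sigma$-finite measures immediately yields $\mu = \nu$ on $\sB^d$.

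For part (ii), I would exploit the support condition to reduce every Borel set to a preimage under $\pi_A$. For any $F \in \sB^d(E)$, since $\mu$ is concentrated on $A \cap E$, one has $\mu(F) = \mu(F \cap A)$. Using $\pi_A|_A = \mathrm{id}_A$ we may write $F \cap A = \pi_A^{-1}(F \cap A) \cap A$, and applying the support condition once more gives
\[
\mu(F) = \mu(\pi_A^{-1}(F \cap A) \cap A) = \mu(\pi_A^{-1}(F \cap A)),
\]
with the same chain for $\nu$. Since $C := \pi_A^{-1}(F \cap A)$ lies in $\pi_A^{-1}\sB^d(E)$, the hypothesis forces $\mu(F) = \mu(C) = \nu(C) = \nu(F)$, hence $\mu = \nu$.

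For part (iii), the boundary restriction $\mu(\partial C) = 0$ must be accommodated by an approximation. Exhausting $E$ by relatively compact open subsets on which both measures are finite, it suffices to show that $\mu(\pi_A^{-1}(F)) = \nu(\pi_A^{-1}(F))$ for every bounded Borel set $F \subseteq A \cap E$; the inner tubular approximations $F_t := \{x \in F : d(x, (A \cap E) \setminus F) > t\}$, $t > 0$, produce nested preimages $\pi_A^{-1}(F_t)$. Since $t \mapsto \mu(\pi_A^{-1}(F_t))$ is monotone, it has only countably many discontinuities, and off this set (together with the analogous countable set for $\nu$) the boundaries turn out to be $\mu$-null, so the hypothesis applies. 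Choosing $t_k \searrow 0$ outside the exceptional set and combining monotone convergence with a symmetric outer approximation then reduces (iii) to the situation of (ii). The main obstacle is verifying that $\mu(\partial \pi_A^{-1}(F_t)) = 0$ for cofinitely many $t$: one splits the boundary into its intersection with $A$, which is contained in the level set $\{x \in A : d(x, (A \cap E) \setminus F) = t\}$ and so carries positive $\mu$-mass for only countably many $t$ by disjointness, and its intersection with $A^c$, which is $\mu$-null by the support condition.
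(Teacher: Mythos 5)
Parts (i) and (ii) are sound. Your (i) is identical to the paper's argument. Your (ii) is actually more direct than the paper's: the paper routes (ii) through (i) by constructing an exhausting sequence $E_n=\pi_A^{-1}(B_n)$ and invoking the Dynkin uniqueness theorem again, whereas you observe that the support condition makes the identity $\mu(F)=\mu(\pi_A^{-1}(F\cap A))$ hold for \emph{every} Borel $F$, which trivializes the claim. That is a cleaner route.

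Part (iii) has a genuine gap. Your inner approximations $F_t=\{x\in F: d(x,(A\cap E)\setminus F)>t\}$ do not exhaust $F$ in general: as $t\searrow 0$ they increase only to the relative interior $\{x\in F: d(x,(A\cap E)\setminus F)>0\}$, which can be empty even when $\mu(F)>0$. This is not a pathological corner case but the \emph{typical} situation in this paper: if $A$ is a fractal (say a fat Cantor set) and $F\subset A$ is a positive-measure Borel set with no relative interior in $A$, then $F_t=\emptyset$ for every $t>0$. The symmetric outer approximation overshoots to $\overline{F}\cap A\cap E$, so the inner and outer limits do not sandwich $\mu(F)$, and the monotone-convergence argument does not close. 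The boundary level-set claim is also not established: $\bd\,\pi_A^{-1}(F_t)\cap A$ is not obviously contained in the level set you describe, since points of $F_t$ can fail to be interior to $\pi_A^{-1}(F_t)$ for reasons unrelated to the distance function on $A$. The paper avoids approximation entirely: it pushes $\mu,\nu$ forward along $\pi_A$ to measures $\mu',\nu'$ on $\R^d$, notes that the hypothesis says $\nu'$ agrees with $\mu'$ on all $\mu'$-continuity sets, and then applies the Portmanteau theorem to the \emph{constant} sequence $\nu_i:=\nu'$ — which converges vaguely both to $\nu'$ (trivially) and to $\mu'$ (by the continuity-set criterion), forcing $\mu'=\nu'$ by uniqueness of vague limits and hence reducing to (ii). That device sidesteps the need for any explicit approximating family and is the piece missing from your argument.
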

\begin{proof}
(i) By Proposition~\ref{prop:stable}, $\sA_A$ is an intersection stable generator of $\sB^d$. 
Hence the assertion in (i) is just a special case of the uniqueness theorem for measures (see e.g. \cite[p.60]{els} or \cite[Satz 5.4, p.26]{Bauer}).

(ii) If $\spt\mu\subseteq A$ and $\spt \nu\subseteq A$, then $\mu(B)=0=\nu(B)$ for any set $B\in\sC_A$ and $\mu(\exo A)=\nu(\exo A)=0$. Together with the hypothesis this implies $\mu$ and $\nu$ coincide on the whole family $\sA_A'$. Furthermore, let $E_n:=\pi_A^{-1}(B_n)$ for $n\in\N$, where $B_n$ is a ball of radius $n$ centered at the origin, and let $E_0:=\exo A$. Observe that $E_n\in\sA_A'$ for all $n\in\N_0$ and hence $\mu(E_n)=\nu(E_n)$, and that $\bigcup_{n=0}^\infty E_n=\R^d$. Moreover, we have the decomposition
$
E_n=(\pi_A^{-1}(B_n)\cap A_1)\cup(\pi_A^{-1}(B_n)\cap (A_1)^c),
$
in which the second set is clearly an element of $\sC_A$, hence $\mu(\pi_A^{-1}(B_n)\cap (A_1)^c)=0$.
The first set is compact such that, by the assumed local finiteness of the two measures, $\mu(\pi_A^{-1}(B_n)\cap A_1)<\infty$.
This implies the finiteness of $\mu(E_n)$ for any $n\in\N$. Therefore, we can apply
part (i), which yields the assertion (ii).

(iii) Let the Borel measures $\mu',\nu'$ be defined by $\mu'(B):=\mu(\piAB)$ and $\nu'(B):=\nu(\piAB)$ for $B\in\sB(\R^d)$. The assumption in (iii) implies $\mu'(B)=\nu'(B)$ for all $\mu'$-continuity sets $B$. (To see this, let $B$ be a $\mu'$-continuity set, that is, assume that $\mu'(\bd B)=0$. By definition of $\mu'$, this means $\mu(\pi_A^{-1}(\bd B))=0$.  Since $\bd (\piAB) \subset \pi_A^{-1}(\bd B)\cup \exo(A)$ and $\mu(\exo(A))=0$, we infer that $\mu(\bd \piAB)=0$, i.e. $\piAB$ is a $\mu$-continuity set. Hence, by the hypothesis in (iii), we have $\mu(\piAB)=\nu(\piAB)$ and therefore $\mu'(B)=\nu'(B)$ as claimed.)

Consider the constant sequence $(\nu_i)$ of measures defined by $\nu_i:=\nu'$. Then obviously $\nu_i$ converges vaguely to $\nu'$ as $i\to\infty$. But we also have $\nu_i\vlim\mu'$ as $i\to\infty$, by the Portmanteau Theorem, since $\nu_i(B)\to\mu'(B)$ for any $\mu'$-continuity set $B\in\sB(\R^d)$. But a vague limit is unique in case it exists and thus we get $\mu'=\nu'$. For the measures $\mu$ and $\nu$ this means, we have in fact
$\mu(\piAB)=\nu(\piAB)$ for any $B\in\sB(\R^d)$ and therefore part (ii) applies and the claim follows.
\end{proof}

\begin{Remark} \label{rem:piAB-suffices}
  The assertion (ii) in Proposition~\ref{cor:meas-uniqueness} clarifies, that local Minkowski content and local S-content of a  set $A$ (which are measures on $A$, in case they exist) are completely determined by their values for sets of the form $\piAB$, that is, by the relative Minkowski contents and S-contents, respectively, relative to sets $\piAB$, $B$ Borel. For a complete understanding of the local geometry of $A$ (as far as it is related to the parallel sets), it therefore suffices to study relative contents relative to sets metrically associated with $A$ (of the special form $\piAB$).
\end{Remark}

\begin{proof}[Proof of Theorem~\ref{thm:local-main}]
Assume $ \mu_r^D(A,\cdot)\wlim\mu $ as $r\searrow 0$. Then, by the Portmanteau Theorem,
\begin{align}
  \label{eq:mu-continuity}
  \lim_{r\to 0} \mu_r^D(A,\piAB)=\mu(\piAB)
\end{align}
for all Borel sets $B\subset\R^d$ such that $\mu(\bd \piAB)=0$.

Since the measure $\sigma_r^D(A,\cdot)$ has its support contained in $A_r$, the family $\{\sigma_r^D(A,\cdot):r\in(0,1]\}$ is tight. Hence, by Prohorov's Theorem, every sequence in this family has a weakly converging subsequence. Let $(r_i)_{i\in\N}$ be a null sequence such that the measures $\sigma_{r_i}^D(A,\cdot)$ converge weakly as $i\to\infty$. Let $\nu$ be the limit measure of this sequence (which is necessarily concentrated on $A$). We will show that $\nu$ coincides with $\mu$. Since the choice of the sequence $(r_i)$ was arbitrary, the limit measure must then be the same for every such sequence, which implies the weak convergence $\nu_r^D(A,\cdot)\wlim\mu$ as $r\searrow 0$ as desired.

To show the equality $\nu=\mu$, by Corollary~\ref{cor:meas-uniqueness} (iii), it suffices to prove that $\nu(\piAB)=\mu(\piAB)$ for all Borel sets $B\subset\R^d$ such that $\mu(\bd \piAB)=0$. So let $B$ such a set. Since, by the observation \eqref{eq:mu-continuity} above, the relative Minkowski content
$$\sM^D(A,\piAB)=\lim_{r\to 0} \mu_r^D(A,\piAB)$$
exists and coincides with $\mu(\piAB)$, we can infer from Theorem~\ref{thm:relMmeas} (if $\mu(\piAB)>0$) or otherwise from Corollary~\ref{cor:rel-contents-zero}, that also the limit
$$
\sS^D(A,\piAB)=\lim_{r\to 0} \sigma_r^D(A,\piAB)=\lim_{i\to\infty} \sigma_{r_i}^D(A,\piAB)
$$
exists and equals $\mu(\piAB)$. This shows $\nu=\mu$ and  completes the proof of the implication \eqref{eq:mu_weakconv} $\Rightarrow $ \eqref{eq:sigma_weakconv} in Theorem~\ref{thm:local-main}.

The proof of the reverse implication is exactly the same with the roles of the families $\mu_r^D(A,\cdot)$ and $\sigma_r^D(A,\cdot)$ interchanged.
\end{proof}

{\bf Unbounded sets $A$.}
Now we discuss an extension of Theorem~\ref{thm:local-main} to unbounded sets. For this we need to extend the notion of local Minkowski content and local S-content to arbitrary closed sets $A\subset\R^d$. First observe that \eqref{eq:old_loc_vol} and \eqref{eq:old_loc_surf} still define measures $\mu_r^s(A,\cdot)$ and $\sigma_r^s(A,\cdot)$ for any $s\geq 0$ and $r>0$ in case of an unbounded set $A$. Note that these measures are locally finite and inner regular and thus Radon measures. For studying their limiting behaviour as $r\se 0$, we use the notion of
vague convergence of measures. 
Recall that a family  $\{\mu_r: r>0\}$ of Radon measures on some open set $E\subseteq\R^d$ (or, more generally, on some locally compact space $E$) is said to converge vaguely to a Radon measure $\mu$ on $E$ as $r\searrow 0$, if and only if $\lim_{r\se 0}\int f d\mu_r=\int f d\mu$ for any continuous real valued function $f$ on $E$ with compact support. We write $\mu_r\vlim \mu$ for this convergence. A necessary and sufficient condition for the vague convergence $\mu_r\vlim\mu$ as $r\searrow 0$ is the following: for any compact set $K\subset E$ and any open, relative compact set $G\subset E$ one has
\begin{align}
\label{eq:vag-cond}
\limsup_{r\searrow 0}\mu_r(K)\leq\mu(K) \quad \text{ and } \quad \liminf_{r\searrow 0}\mu_r(G)\geq\mu(G),
\end{align}
respectively, cf.~e.g.~\cite[p.219]{Bauer}. We will study the vague convergence of the families $\{\mu_r^s(A,\cdot):r>0\}$ and $\{\sigma_r^s(A,\cdot):r>0\}$ as $r\se 0$ for closed (unbounded) sets $A\subset\R^d$ and suitable exponents $s\in[0,d)$.

In case of a compact set $A$, the existence and finiteness of the Minkowski content $\sM^s(A)$ is a necessary condition for the weak convergence of the measures $\mu_r^s(A,\cdot)$ (and $\sigma_r^s(A,\cdot)$), which allows to easily single out the right dimension $s$ for which the local contents are interesting. In case of an unbounded set $A$, the total mass of these measures is infinite, in general, and it is not obvious what the right scaling exponent $s$ is for a given $A$. It seems that there is no generally agreed notion of Minkowski dimension (or Minkowski content) in case of unbounded sets. We suggest the following definition: Let $(B_n)_{n\in\N}$ be an increasing sequence of bounded open sets in $\R^d$ such that their union covers $\R^d$, i.e.\ $\bigcup_n B_n=\R^d$. For $A\subset\R^d$, we define the (upper) Minkowski dimension of $A$ by
\begin{align}
  \label{eq:def-mink-unbd} \udim_M A:=\inf\{s\geq 0: \usM^s(A,\pi_A^{-1}(B_n))<\infty \text{ for all } n\in\N\}
\end{align}
Note that for $A$ bounded, this definition coincides with the standard one, since for all $n$ large enough the set $A$ will be contained in $B_n$ and $\usM^s(A,\pi_A^{-1}(B_n))=\usM^s(A)$ holds. Moreover, the definition does not depend the choice of the sets $B_n$. Replacing $(B_n)$ with any other increasing sequence of bounded open sets $B'_n$ such that $\bigcup_n B'_n=\R^d$ will produce the same number for the upper Minkowski dimension. In a similar way, we can define the lower Minkowski dimension $\ldim_M A$ of $A$ by replacing the upper relative Minkowski contents in \eqref{eq:def-mink-unbd} by their lower countertparts. If both numbers coincide, the common value can be regarded as the Minkowski dimension $\dim_M A$. The (upper and lower) S-dimension of an unbounded set $A$ can be defined in a completely analogous way.

It is natural and convenient to use relative contents relative to preimages $\pi_A^{-1}(B_n)$ under the metric projection onto $A$ for the definition. Note that, in general, the sets $\pi_A^{-1}(B_n)$ are not bounded. However, the boundedness of the sets $B_n$ implies the sets $A_r\cap \pi_A^{-1}(B_n)$ are bounded such that $A_r\cap \pi_A^{-1}(B_n)$ has finite volume and $\bd A_r\cap \pi_A^{-1}(B_n)$ has finite surface area for each $r>0$.

As a first step towards a generalization of Theorem~\ref{thm:local-main}, we state the following auxiliary result, which characterizes vague convergence of a family of measures in terms of the vague convergence of their restrictions to (bounded) open sets.

\begin{Lemma} \label{lem:restriction}
Let $\{\mu_r:r\geq 0\}$ be a family of Radon measures on $\R^d$.
\begin{enumerate}
  \item[(i)] Assume that $\mu_r$ converges vaguely to $\mu_0$ as $r \se 0$ and let $G\subset\R^d$ be some open set. Then the restriction $\mu_r|_G$ (given by $\mu_r|_G(C):=\mu_r(C)$, $C\in\sB(G)$) is a Radon measure on $\sB(G)$ for each $r\geq 0$, and
      $
      \mu_r|_G\vlim \mu_0|_G
      $
      as $r\se 0$.
    \item[(ii)] If $\mu_r|_{G}\vlim \mu_0|_{G}$ as $r\se 0$ holds for each open set $G\subset\R^d$, then
      $$
      \mu_r\vlim \mu_0, \text{ as } r\se 0.
      $$
      In fact, for the conclusion to hold, it is enough to require the hypothesis to hold for an increasing sequence of open sets $G$ whose union covers $\R^d$.
\end{enumerate}
\end{Lemma}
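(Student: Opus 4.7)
My plan is to prove both parts via the standard test function characterization of vague convergence, i.e.\ $\mu_r \vlim \mu_0$ on an open set $E$ iff $\int f\, d\mu_r \to \int f\, d\mu_0$ for every $f\in C_c(E)$. For part (i), the first task is to check that the restriction $\mu_r|_G$ of a Radon measure to an open set $G\subset\R^d$ is itself a Radon measure on $G$; this is routine because $G$ is itself locally compact and the restriction inherits local finiteness and inner regularity from $\mu_r$ (every compact set in $G$ is compact in $\R^d$). Then, given $f\in C_c(G)$, I would extend $f$ to a function $\tilde f\in C_c(\R^d)$ by setting it equal to zero outside $G$; since $\supp f$ is a compact subset of the open set $G$, this extension is continuous and still compactly supported. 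The assumed vague convergence on $\R^d$ gives
\[
\int_G f\, d\mu_r|_G \;=\; \int_{\R^d} \tilde f\, d\mu_r \;\longrightarrow\; \int_{\R^d} \tilde f\, d\mu_0 \;=\; \int_G f\, d\mu_0|_G,
\]
as $r\se 0$, which is exactly $\mu_r|_G \vlim \mu_0|_G$.

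For part (ii), I would argue in the reverse direction. Let $(G_n)$ be an increasing sequence of open sets with $\bigcup_n G_n = \R^d$ on which the vague convergence of the restrictions is assumed. Given any $f\in C_c(\R^d)$, the support $K:=\supp f$ is compact, so by the open cover property there exists $n$ with $K\subset G_n$. Then $f|_{G_n}$ belongs to $C_c(G_n)$, and
\[
\int_{\R^d} f\, d\mu_r \;=\; \int_{G_n} f|_{G_n}\, d\mu_r|_{G_n} \;\longrightarrow\; \int_{G_n} f|_{G_n}\, d\mu_0|_{G_n} \;=\; \int_{\R^d} f\, d\mu_0
\]
by the hypothesis applied to $G_n$. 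Since $f\in C_c(\R^d)$ was arbitrary, the full vague convergence $\mu_r \vlim \mu_0$ follows. The statement for a single arbitrary open $G$ is a trivial special case of this argument applied to the constant cover $G_n = G$ (when $G = \R^d$).

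There is essentially no serious obstacle here; the only point requiring minor care is the extension-by-zero step in (i), which must produce a continuous function, and this works precisely because $\supp f$ is a compact subset of the open set $G$ (so it has a positive distance from $\R^d\setminus G$, making the extension continuous across the boundary trivially since $f$ already vanishes in a neighborhood of $\partial G$). Everything else reduces to the definition of vague convergence via $C_c$-test functions and the compactness of supports.
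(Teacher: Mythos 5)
Your proof is correct, and it takes a genuinely different (though equally standard) route from the paper's. You argue directly from the definition of vague convergence via $C_c$-test functions, using the zero-extension trick in part (i) (extending $f\in C_c(G)$ to $\tilde f\in C_c(\R^d)$, which is continuous because $\supp f$ has positive distance from $\partial G$) and the restriction of a test function to a set $G_n$ containing its support in part (ii). The paper instead appeals to the Portmanteau-type characterization of vague convergence quoted as \eqref{eq:vag-cond} (limsup on compacta, liminf on open relatively compact sets, citing Bauer's Satz 30.2): for (i) it observes that any compact $K\subset G$ is compact in $\R^d$ and any $V$ open and relatively compact in $G$ is relatively compact in $\R^d$, and for (ii) it covers a compact $K$ (resp.\ the closure $\overline V$) by some $G_i$. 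Both arguments are short; yours is arguably more self-contained since it invokes only the definition, while the paper's reuses the criterion \eqref{eq:vag-cond} it already set up and needs elsewhere (e.g.\ in the proofs of Theorem~\ref{thm:local-main3} and Proposition~\ref{thm:local-main2}), making the exposition more uniform. One small remark: in part (ii) your justification that some $G_n$ contains $K$ should explicitly use compactness of $K$ together with the fact that the $G_n$ are increasing (finite subcover, then take the largest index); the paper is similarly terse on this point, so it is not a gap, just worth stating.
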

\begin{proof}
(i) Let $K\subset G$ be compact. By the criterion \eqref{eq:vag-cond}, the assumed vague convergence $\mu_r\vlim\mu_0$ implies
$$
\limsup_{r\se 0} \mu_r|_G(K)=\limsup_{r\se 0} \mu_r(K)\le \mu_0(K)=\mu_0|_G(K).
$$
Similarly, if $V\subset G$ is open and relatively compact in $G$, then $V$ is relatively compact in $\R^d$
and
$$
\liminf_{r\se 0} \mu_r|_G(V)=\liminf_{r\se 0} \mu_r(V)\ge \mu_0(V)=\mu_0|_G(V).
$$
By \eqref{eq:vag-cond}, these two observations together imply the vague convergence $\mu_r|_G\vlim \mu_0|_G$ as $r\se 0$.

(ii) Fix some increasing sequence of open sets $G_i\subset\R^d$, $i\in\N$ such that $\bigcup_i G_i=\R^d$. By the hypothesis, we have $\mu_r|_{G_i}\vlim\mu_0|_{G_i}$, as $r\se 0$ for each $i\in\N$. To show the vague convergence  $\mu_r\vlim\mu_0$ as $r\se 0$, we use again the criterion \eqref{eq:vag-cond}: Let $K\subset\R^d$ be compact. Then there exists an index $i$ such that $K\subset G_i$ and we infer
$$
\limsup_{r\se 0} \mu_r(K)=\limsup_{r\se 0} \mu_r|_{G_i}(K)\le \mu_0|_{G_i}(K)=\mu_0(K).
$$
Similarly, given some open, relative compact set $V$ in $\R^d$, we can find an index $i$ such that $\overline{V}\subset G_i$ (implying that $V$ is relatively compact in $G_i$) and we get
$$
\liminf_{r\se 0} \mu_r(V)=\liminf_{r\se 0} \mu_r|_{G_i}(V)\ge \mu_0|_{G_i}(V)=\mu_0(V).
$$
Since this is true for any such $K$ and any such $V$, the vague convergence $\mu_r\vlim \mu_0$ follows again from~\cite[Satz 30.2]{Bauer}.
\end{proof}

We will use Lemma~\ref{lem:restriction} to reduce the problem of comparing the full parallel volume and full parallel surface area of an unbounded set (which are infinite measures) to the problem of comparing the restrictions of these measures to suitable open sets on which the measures are finite. The next statement generalizes Theorem~\ref{thm:local-main} to such restrictions.

\begin{Proposition}\label{prop:local-main2}
Let $A\subset\R^d$ be a closed set and $s\in[0,d)$. Let $B\subset\R^d$ be open and bounded and set $\tilde B:=\piAB$. 
Let $\mu$ be a finite Radon measure on $A\cap B$.
Then
\begin{align} \label{eq:mu_weakconv2}
  \mu_r^s|_{\tilde B}(A,\cdot)\wlim\mu \text{ as } r\searrow 0
\end{align}
if and only if
\begin{align} \label{eq:sigma_weakconv2}
  \sigma_r^s|_{\tilde B}(A,\cdot)\wlim\mu \text{ as } r\searrow 0.
 \end{align}
\end{Proposition}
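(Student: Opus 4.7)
The strategy is to adapt the proof of Theorem~\ref{thm:local-main}, exploiting the boundedness of $B$ so that we remain in the finite-mass regime and weak convergence is the right notion. First observe that since $B$ is bounded, so is $A\cap B$, and the inclusion $A_r\cap\tilde B\subset (A\cap B)_r$ combined with Lemma~\ref{lem:ma-implies-kneser} gives $V_{A,B}(r)<\infty$ for every $r>0$. Hence both $\mu_r^s|_{\tilde B}(A,\cdot)$ and $\sigma_r^s|_{\tilde B}(A,\cdot)$ are finite Borel measures on $\R^d$, and for $r\le 1$ all of them are concentrated in the common compact set $\cl{(A\cap B)_1}$. In particular both families are tight.

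Assuming \eqref{eq:mu_weakconv2}, the Portmanteau theorem yields, for every Borel set $C\subset\R^d$ such that $\pi_A^{-1}(C\cap B)$ is a $\mu$-continuity set, the convergence
\begin{align*}
\lim_{r\se 0}\mu_r^s(A,\pi_A^{-1}(C\cap B))=\mu(\pi_A^{-1}(C\cap B)),
\end{align*}
where I have used $\piAC\cap\tilde B=\pi_A^{-1}(C\cap B)$. Thus the relative Minkowski content $\sM^s(A,\pi_A^{-1}(C\cap B))$ exists and equals $\mu(\pi_A^{-1}(C\cap B))$ for every such $C$. Theorem~\ref{thm:relMmeas} (when this common value is positive) and Corollary~\ref{cor:rel-contents-zero} (when it vanishes) then guarantee that the relative S-content $\sS^s(A,\pi_A^{-1}(C\cap B))$ exists and takes the same value.

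By Prohorov's theorem, every null sequence $(r_i)$ admits a subsequence $(r_{i_k})$ along which $\sigma_{r_{i_k}}^s|_{\tilde B}(A,\cdot)$ converges weakly to some finite Borel measure $\nu$ with $\spt\nu\subset \cl{A\cap B}$. The previous step gives $\nu(\pi_A^{-1}(C\cap B))=\mu(\pi_A^{-1}(C\cap B))$ for enough Borel sets $C$ to invoke Corollary~\ref{cor:meas-uniqueness}(iii), forcing $\nu=\mu$. Since every subsequential weak limit equals $\mu$, the whole family $\sigma_r^s|_{\tilde B}(A,\cdot)$ converges weakly to $\mu$, which is \eqref{eq:sigma_weakconv2}. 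The reverse implication follows from exactly the same argument with the roles of the volume and surface families exchanged.

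The main technical obstacle I anticipate is ensuring that the subsequential limit $\nu$ does not place spurious mass on $\cl{\tilde B}\setminus\tilde B$, in particular on $A\cap\bd B$, where Corollary~\ref{cor:meas-uniqueness}(iii) cannot be invoked directly. To circumvent this I would exhaust $B$ from inside by an increasing sequence of open sets $B^{(k)}$ with $\cl{B^{(k)}}\subset B^{(k+1)}\subset B$ and $\mu(A\cap\bd B^{(k)})=0$, run the argument above on each $\tilde B^{(k)}:=\pi_A^{-1}(B^{(k)})$ to identify $\nu$ with $\mu$ on every $\tilde B^{(k)}$, and then let $k\to\infty$; the total-mass agreement $\nu(\R^d)=\mu(A\cap B)$, obtained by applying Theorem~\ref{thm:relMmeas} to $B$ itself, rules out any residual mass on $A\cap\bd B$.
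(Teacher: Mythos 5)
Your overall architecture (Portmanteau on $\mu$-continuity sets of the form $\piAC$, then Theorem~\ref{thm:relMmeas} / Corollary~\ref{cor:rel-contents-zero} to transfer to the surface family, then Prohorov and Corollary~\ref{cor:meas-uniqueness}(iii) to identify the subsequential limit with $\mu$) is exactly the paper's argument, so the core is right. Where you and the paper part ways is the tightness step, and this is precisely the place you flag as the ``main technical obstacle.''

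Your shortcut --- all the restricted measures for $r\le 1$ live in the fixed compact $\cl{(A\cap B)_1}$ --- gives tightness of the family viewed as measures on $\R^d$, but not tightness as measures on the open set $\tilde B$, and it also does not by itself give uniform boundedness of the total masses $\sigma_r^s|_{\tilde B}(\R^d)$, which Prohorov needs. As you note, the subsequential limit $\nu$ could then charge $A\cap\bd B\subset\cl{\tilde B}\setminus\tilde B$, and Corollary~\ref{cor:meas-uniqueness}(iii) (which requires both competing measures supported in $A\cap E$) cannot be invoked on $E=\tilde B$ without first ruling out this escaping mass. The paper avoids the detour by proving tightness \emph{inside} $\tilde B$ directly: starting from tightness of the $\mu_r^s|_{\tilde B}$ family (a consequence of the assumed weak convergence), it takes a compact $K\subset\tilde B$ with small complementary $\mu_r$-mass, passes to the smallest metrically associated compact set $\tilde K\supset K$ inside $\tilde B$, and then uses Theorem~\ref{thm:u-rel-contents} to transfer the bound $\usM^s(A,\tilde B\setminus\tilde K)\le\eps$ into $\usS^s(A,\tilde B\setminus\tilde K)\le\frac{d}{d-s}\eps$; this simultaneously yields uniform mass bounds and compact exhaustion within $\tilde B$, so that the Prohorov limit $\nu$ is automatically concentrated on $A\cap B$. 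This is the one idea missing from your write-up: a metrically associated compact set combined with Theorem~\ref{thm:u-rel-contents} gives the required tightness and forecloses the boundary-mass issue in one stroke.

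Your proposed repair (exhaust $B$ by $B^{(k)}$ with $\cl{B^{(k)}}\subset B$ and $\mu(A\cap\bd B^{(k)})=0$, identify $\nu$ and $\mu$ on each $\tilde B^{(k)}$, then match total masses via Theorem~\ref{thm:relMmeas} applied to $B$ itself) is a plausible alternative route, but as sketched it is under-specified: applying Corollary~\ref{cor:meas-uniqueness} on $E=\tilde B^{(k)}$ again presupposes that $\nu|_{\tilde B^{(k)}}$ is supported in $A\cap B^{(k)}$, which is the very type of statement you are trying to establish; and the step from ``$\sS^s(A,\piAC)=\mu(\piAC)$'' to ``$\nu(\piAC)=\mu(\piAC)$'' needs $\piAC$ to be a $\nu$-continuity set, which is not given. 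These holes are closable, but the paper's approach closes them more economically by getting the tightness in $\tilde B$ right at the outset.
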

\begin{proof}
The line of proof is very similar to the one of Theorem~\ref{thm:local-main}.
Assume \eqref{eq:mu_weakconv2} holds. By the Portemanteau Theorem,
the weak convergence implies in particular that,
\begin{align}
  \label{eq:mu-continuity2}
  \lim_{r\to 0} \mu_r^s|_{\tilde B}(A,\piAC)=\mu(\piAC)
\end{align}
for all Borel sets $C\subset\R^d$ such that $\mu(\bd \piAC)=0$.

To show the tightness of the family $\{\sigma_r^s|_{\tilde B}(A,\cdot):r\in(0,1]\}$ is more delicate now. Let $(r_n)_{n\in\N}$ be an arbitrary null sequence of radii $r_n>0$. We will first show that the sequence $\{\sigma_{r_n}^s|_{\tilde B}(A,\cdot):n\in\N\}$ is tight.

The weak convergence of the measures $\mu_r^s|_{\tilde B}(A,\cdot)$ implies the tightness of the family $\{\mu_r^s|_{\tilde B}(A,\cdot):r\in(0,1]\}$, that is, for each $\eps>0$ there exists a compact set $K\subset\tilde B$ such that $\mu_r^s|_{\tilde B}(A,\tilde B\setminus K)<\eps$ for all $r\in(0,1]$. Let 
$$
 \tilde K:=\{x\in \R^d: \exists y\in K \text{ such that } x\in [y,\pi_A(y)]\},
$$
that is, let $\tilde K$ be the smallest metrically associated set containing $K$. It is easy to see that $\tilde K$ is a compact subset of $\tilde B$ containing $K$ and that thus
$$
\mu_r^s|_{\tilde B}(A,\tilde B\setminus \tilde K)\leq \mu_r^s|_{\tilde B}(A,\tilde B\setminus K)<\eps
$$
for each $r\in(0,1]$. This implies 
\begin{align}
  \label{eq:tightness-proof}
  \usM^s(A,\tilde B\setminus \tilde K)=\limsup_{r\se 0} \mu_r^s|_{\tilde B}(A,\tilde B\setminus \tilde K)\leq \eps
\end{align}
and since $\tilde K$ is metrically associated with $A$, we can infer from Theorem~\ref{thm:u-rel-contents}, that
$$
\limsup_{n\to\infty} \sigma_{r_n}^s|_{\tilde B}(A,\tilde B\setminus\tilde K)\leq \limsup_{r\se 0} \sigma_r^s|_{\tilde B}(A,\tilde B\setminus\tilde K)=\usS^s(A,\tilde B\setminus \tilde K)
\leq c \eps
$$
with $c:=\frac d{d-s}$.
By the definition of the limsup, there must be some index $n_0$ such that, for each $n\geq n_0$, we have
$$
\sigma_{r_n}^s|_{\tilde B}(A,\tilde B\setminus\tilde K)\le 2c\eps.
$$
(The inner regularity of the measures $\sigma_{r_n}$ allows to enlarge $\tilde K$ such that this inequality holds for all $n$.)
 Since this works for any $\eps>0$, the family $\{\sigma_{r_n}^s|_{\tilde B}(A,\cdot):n\in\N\}$ is tight. Now, by Prohorov's Theorem, every sequence in this family has a weakly converging subsequence. Let $(r_{n(i)})_{i\in\N}$ be a subsequence such that the measures $\sigma_{r_{n(i)}}^s|_{\tilde B}(A,\cdot)$ converge weakly as $i\to\infty$. Let $\nu$ be the limit measure of this sequence (which is necessarily concentrated on $A\cap B$). We will show that $\nu$ coincides with $\mu$. Since the choice of the subsequence $(r_{n(i)})$ was arbitrary, the limit measure must then be the same for every such sequence, which implies the weak convergence $\sigma_{r_n}^s|_{\tilde B}(A,\cdot)\wlim\mu$ as $n\to\infty$ as desired. Furthermore, since the sequence $(r_n)$ was arbitrary, we can even conclude the weak convergence $\sigma_r^s|_{\tilde B}(a,\cdot)\wlim \mu$ as $r\se 0$.

To show the equality $\nu=\mu$, by Corollary~\ref{cor:meas-uniqueness} (iii), it suffices to prove that $\nu(\piAC)=\mu(\piAC)$ for all Borel sets $C\subset B$ such that $\mu(\bd \piAC)=0$. So let $C$ such a set. Since, by the observation \eqref{eq:mu-continuity2} above, the relative Minkowski content
$$\sM^s(A,\piAC)=\lim_{r\se 0} \mu_r^s|_{\tilde B}(A,\piAC)$$
exists and coincides with $\mu(\piAC)$, we can infer from Theorem~\ref{thm:relMmeas} (if $\mu(\piAB)>0$) or otherwise from Corollary~\ref{cor:rel-contents-zero}, that also the limit
$$
\sS^s(A,\piAC)=\lim_{r\se 0} \sigma_r^s|_{\tilde B}(A,\piAC)=\lim_{i\to\infty} \sigma_{r_{n(i)}}^s|_{\tilde B}(A,\piAC)=\nu(\piAC)
$$
exists and equals $\mu(\piAC)$. This shows $\nu=\mu$ and completes the proof of the implication \eqref{eq:mu_weakconv2} $\Rightarrow $ \eqref{eq:sigma_weakconv2}.

The proof of the reverse implication is almost the same with the roles of the families $\mu_r^s|_{\tilde B}(A,\cdot)$ and $\sigma_r^s|_{\tilde B}(A,\cdot)$ interchanged and with the constant $c$ above replaced by $1$ (given again by Theorem~\ref{thm:u-rel-contents}).
\end{proof}

We are now ready to state and prove the main result of this section, the announced equivalence of local Minkowski content $\mu^s(A,\cdot)$ and local S-content $\sigma^s(A,\cdot)$ for arbitrary (possibly unbounded) closed sets $A\subset\R^d$. We emphasize that according to the statement it is enough to assume the existence of one of the two local contents, $\mu^s(A,\cdot)$ or $\sigma^s(A,\cdot)$, to conclude the existence of other one and their coincidence.

\begin{Theorem}\label{thm:local-main3}
Let $A\subset\R^d$ be a closed set and $s\in[0,d)$. 
Let $\mu$ be some Radon measure concentrated on $A$. Then
\begin{align} \label{eq:mu_weakconv3}
  \mu_r^s(A,\cdot)\vlim\mu \text{ as } r\searrow 0
\end{align}
if and only if
\begin{align} \label{eq:sigma_weakconv3}
  \sigma_r^s(A,\cdot)\vlim\mu \text{ as } r\searrow 0.
 \end{align}
If \eqref{eq:mu_weakconv3} holds and $\mu\neq 0$, then $s$ is the Minkowski dimension of $A$ (in the sense of the definition given in \eqref{eq:def-mink-unbd}).
\end{Theorem}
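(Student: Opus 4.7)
The plan is to reduce to the bounded case (Proposition~\ref{prop:local-main2}) via a localization argument. Fix an increasing sequence $(B_n)_{n\in\N}$ of bounded open subsets of $\R^d$ with $\bigcup_n B_n = \R^d$, and set $\tilde B_n := \pi_A^{-1}(B_n)$. Since $B_n$ is bounded, $A_r \cap \tilde B_n \subset (B_n)_r$ is bounded for every $r > 0$, so $\mu_r^s|_{\tilde B_n}$ and $\sigma_r^s|_{\tilde B_n}$ are finite Radon measures, and $\mu|_{\tilde B_n} = \mu|_{B_n}$ is also finite (as $\mu$ is Radon on $\R^d$ and concentrated on $A$).

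For the implication \eqref{eq:mu_weakconv3} $\Rightarrow$ \eqref{eq:sigma_weakconv3}, assume $\mu_r^s \vlim \mu$. First I would upgrade this hypothesis to the weak convergence $\mu_r^s|_{\tilde B_n} \wlim \mu|_{B_n}$ of finite measures for each $n$: the supports $A_r \cap \tilde B_n$, $r \leq 1$, lie in the common bounded set $(B_n)_1$ (giving tightness), and a cutoff using a function $\phi \in C_c(\R^d)$ equal to $1$ on a neighborhood of $\overline{B_n}$ combined with the vague convergence delivers the convergence of total masses $\mu_r^s(\tilde B_n) \to \mu(B_n)$. Proposition~\ref{prop:local-main2} then yields $\sigma_r^s|_{\tilde B_n} \wlim \mu|_{B_n}$. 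To conclude $\sigma_r^s \vlim \mu$ globally, given $f \in C_c(\R^d)$ I choose $n$ with $\dist(\spt f, B_n^c) > 0$; for sufficiently small $r$, every point of $\bd A_r \cap \spt f$ either has its unique nearest point in $A$ inside $B_n$ (and so lies in $\tilde B_n$) or belongs to $\exo A$. The exoskeleton contribution $\Ha^{d-1}(\bd A_r \cap \spt f \cap \exo A)$ is controlled by $\tfrac12((V_{A,B_n})'_-(r) - (V_{A,B_n})'_+(r))$ via Proposition~\ref{lem:pos-bd}, Lemma~\ref{lem:bd=pos-bd} and \eqref{eq:deriv-vol} applied to $V_{A,B_n}$; this difference is $o(r^{d-1-s})$ because the two one-sided derivatives share their leading $r^{d-1-s}$-asymptotic, as a consequence of the weak convergence just established together with \cite[Proposition 2.3]{RW13}. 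Hence $\int f\,d\sigma_r^s \to \int f\,d\mu$. The reverse implication is entirely symmetric, and in fact simpler since the analogous exoskeleton issue does not arise for $\mu_r^s$ because $\lambda_d(\exo A) = 0$.

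For the dimension claim, suppose $\mu \neq 0$ and \eqref{eq:mu_weakconv3} holds. Then $\mu(A \cap B_n) > 0$ for all sufficiently large $n$, and the weak convergence on $\tilde B_n$ gives $\sM^s(A, \tilde B_n) = \mu(A \cap B_n) < \infty$ for every $n$, so by \eqref{eq:def-mink-unbd} we have $\udim_M A \leq s$. Conversely, for $t < s$ and any $n$ with $\mu(A \cap B_n) > 0$,
\[
\frac{\lambda_d(A_r \cap \tilde B_n)}{\kappa_{d-t} r^{d-t}} = r^{t-s}\,\frac{\kappa_{d-s}}{\kappa_{d-t}}\,\frac{\lambda_d(A_r \cap \tilde B_n)}{\kappa_{d-s} r^{d-s}} \longrightarrow \infty,
\]
so $\usM^t(A, \tilde B_n) = \infty$ and hence $\udim_M A \geq s$. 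An analogous argument for the lower contents yields $\ldim_M A = s$ as well, so $\dim_M A = s$.

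The main technical obstacle is the treatment of the exoskeleton $\exo A$ when comparing $\sigma_r^s$ with its restriction $\sigma_r^s|_{\tilde B_n}$: one has to show that the surface mass carried by $\bd A_r \cap \exo A$ near $\spt f$ is negligible at the scale $r^{d-1-s}$, which rests on the fact that the weak convergence established in the first step forces $(V_{A,B_n})'_+(r)$ and $(V_{A,B_n})'_-(r)$ to share the same asymptotic as $r\searrow 0$, so that their difference is automatically $o(r^{d-1-s})$.
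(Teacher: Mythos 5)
The overall strategy — localize to preimages $\pi_A^{-1}(B_n)$ of an exhaustion $(B_n)$ by bounded open sets, apply Proposition~\ref{prop:local-main2} on each piece, then glue — is the same as the paper's, but two of your steps do not work as written.

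\textbf{Choice of the exhaustion.} The paper explicitly chooses $(B_n)$ so that $\mu(\bd B_n)=0$ (possible since at most countably many concentric spheres can carry $\mu$-mass), and uses $\mu$ concentrated on $A$ to deduce $\mu(\bd \pi_A^{-1}(B_n))=0$. This is what delivers the conservation of mass $\lim_{r\se 0}\mu_r^s(\pi_A^{-1}(B_n))=\mu(\pi_A^{-1}(B_n))$ and upgrades the vague convergence of the restrictions to weak convergence. Your cutoff argument with $\phi\in C_c$ equal to $1$ near $\overline{B_n}$ only gives $\limsup_{r\se 0}\mu_r^s(\pi_A^{-1}(B_n))\leq \mu(\overline{B_n})$, and if $\mu$ charges $\bd B_n$ the total masses need not converge to $\mu(B_n)$, so weak convergence of $\mu_r^s|_{\tilde B_n}$ can fail. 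You need to add the $\mu$-null-boundary condition.

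\textbf{Exoskeleton control.} The more serious gap is the claimed bound of $\Ha^{d-1}(\bd A_r\cap\spt f\cap\exo A)$ by $\tfrac12\bigl((V_{A,B_n})'_-(r)-(V_{A,B_n})'_+(r)\bigr)$. By Theorem~\ref{thm:local-deriv-volume1} and Proposition~\ref{lem:pos-bd}, that difference equals $\sM^{d-1}(\bd A_r,\pi_A^{-1}(B_n))-\Ha^{d-1}(\bd A_r\cap\pi_A^{-1}(B_n))$, a discrepancy computed entirely inside $\pi_A^{-1}(B_n)\subset\Unp(A)$, which is disjoint from $\exo A$. None of the one-sided derivatives of $V_{A,B_n}$ carries any information about the surface mass sitting on $\exo A$, so the key step of your gluing argument is unproven. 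The paper instead recovers the global vague convergence from Lemma~\ref{lem:restriction}(ii). Your worry that the sets $\pi_A^{-1}(B_n)$ cover only $\Unp(A)$ rather than $\R^d$ is a legitimate one, but the repair lies elsewhere: replace $G_n:=\pi_A^{-1}(B_n)$ by $G_n':=G_n\cup\bigl(\R^d\setminus\overline{A_{1/n}}\bigr)$. These sets are open, increase to $\R^d$, and for $r<1/n$ satisfy $\sigma_r^s|_{G_n'}=\sigma_r^s|_{G_n}$ (since $\bd A_r\subset\overline{A_{1/n}}$) as well as $\mu|_{G_n'}=\mu|_{G_n}$ (since $\mu$ is concentrated on $A$), so the convergence obtained on $G_n$ transfers and Lemma~\ref{lem:restriction}(ii) applies.

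The dimension part of your argument is essentially correct once the $\mu$-null-boundary condition on the exhaustion is in place.
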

\begin{proof}
  Assume \eqref{eq:mu_weakconv3} holds. Let $(B_i)_{i\in\N}$ be an increasing sequence of bounded open sets such that $\bigcup_i B_i=\R^d$ and $\mu(\bd B_i)=0$ for each $i\in\N$. (This last condition can easily be satisfied. Consider e.g.\ a family $\{B(r): r>0\}$ of concentric balls $B(r)$ of radius $r$. Their boundaries $\bd B(r)$ are pairwise disjoint Borel sets and thus at most countably many of them can have positive $\mu$-measure. The sets $B_i$ can be chosen from those balls with $\mu(\bd B(r))=0$.) Let $G_i:=\pi^{-1}_A(B_i)$. Then $G_i$ is open, $A\cap G_i=A\cap B_i$ is bounded and thus $A_r\cap G_i$ is open and bounded for any $r>0$. Since $\mu$ is concentrated on $A$, the additional assumption $\mu(\bd B_i)=0$ implies $\mu(\bd G_i)=\mu(\bd G_i\cap A)\leq\mu(\bd B_i\cap A)=0$.

 Writing $\mu_r:=\mu_r^s(A,\cdot)$, by Lemma~\ref{lem:restriction}, we have the vague convergence $\mu_r|_{G_i}\vlim \mu|_{G_i}$ as $r\se 0$ for each $i\in\N$. The additional assumption implies that there is in fact no loss of mass, i.e., we have $\lim_{r\se 0}\mu_r|_{G_i}(G_i)=\mu|_{G_i}(G_i)$ for the total masses of the restrictions. To see this observe that on the one hand we have
 $$
 \limsup_{r\se 0}\mu_r(G_i)=\limsup_{r\se 0} \mu_r(A_1\cap G_i)\leq\limsup_{r\se 0} \mu_r(\overline{A_1\cap G_i})\leq \mu(\overline{A_1\cap G_i}),
 $$
 due to the compactness of the set $\overline{A_1\cap G_i}$, and the latter expression satisfies $\mu(\overline{A_1\cap G_i})\leq \mu(\overline{G_i})=\mu(G_i)$, since $\bd G_i$ carries no mass. On the other hand we have
 $$
 \liminf_{r\se 0}\mu_r(G_i)=\liminf_{r\se 0} \mu_r(A_1\cap G_i)\ge \mu(A_1\cap G_i)=\mu(A\cap G_i)=\mu(G_i),
 $$
 since $A_1\cap G_i$ is open and relatively compact. Hence $\lim_{r\se 0}\mu_r(G_i)=\mu(G_i)$, which proves the asserted.

 The preservation of mass of a vaguely convergent sequence of measures is equivalent to its weak convergence, cf.\ e.g.~\cite[Satz 30.8]{Bauer}. Hence, we have in fact the weak convergence $\mu_r|_{G_i}\wlim\mu|_{G_i}$ as $r\se 0$ for each $i\in\N$.

 Now, writing $\sigma_r:=\sigma_r^s(A,\cdot)$ for short, Proposition~\ref{prop:local-main2} implies the weak convergence $\sigma_r|_{G_i}\wlim\mu|_{G_i}$ as $r\se 0$ for each $i\in\N$, which, by Lemma~\ref{lem:restriction} (ii), is sufficient to conclude the vague convergence $\sigma_r\vlim\mu$ as $r\se 0$.

 This proves one direction of the equivalence. The other one follows similarly with the roles of the measures $\mu_r^s(A,\cdot)$ and $\sigma_r^s(A,\cdot)$ interchanged.

 Assume now we are in the situation that \eqref{eq:mu_weakconv3} (and thus \eqref{eq:sigma_weakconv3}) holds for some locally finite Radon measure $\mu\neq 0$ and some $s$. Let $(B_i)_{i\in\N}$ be an increasing sequence of bounded open sets with $\mu(\bd \pi_A^{-1}(B_i))=0$ for each $i\in\N$. The latter property implies that
 $$
 \sM^s(A,\pi_A^{-1}(B_i))=\lim_{r\se 0} \mu^s_r(A,\pi_A^{-1}(B_i))=\mu(\pi_A^{-1}(B_i))<\infty
 $$
 because of the local finiteness of $\mu$. This implies $\udim_M A\leq s$.
 Moreover, since $\mu\neq 0$, there exists some $i_0\in\N$ such that $\sM^s(A,\pi_A^{-1}(B_i))=\lim_{r\se 0} \mu^s_r(A,\pi_A^{-1}(B_i))=\mu(\pi_A^{-1}(B_i))>0$ for any $i\geq i_0$. This implies $\lsM^t(A,\pi_A^{-1}(B_i))=\infty$ for any $t<s$ and thus $\ldim_M A\geq s$. This shows that $\dim_M A$ exists and equals $s$ completing the proof of Theorem~\ref{thm:local-main3}.
 \end{proof}

%

{\bf Properties of local Minkowski and S-content.} In case local Minkowski content and local S-content exist, these measures inherit some of the properties from their defining sequence of measures. It is easy to see that some form of motion invariance and homogeneity of volume and surface area, respectively, survive in the limit: If $\mu^s(A,\cdot)$ exists for some closed set $A\subset\R^d$, then, for any Euclidean motion $g$ and any Borel set $B$,
\begin{align*}
  \mu^s(gA,gB)=\mu^s(A,B).
\end{align*}
Similarly, if $\sigma^s(A,\cdot)$ exists, then
\begin{align*}
  \sigma^s(gA,gB)=\sigma^s(A,B).
\end{align*}
Furthermore, the $s$-dimensional contents are homogeneous of degree $s$ if they exist, that is,
\begin{align*}
  \mu^s(\lambda A,\lambda B)=\lambda^s \mu^s(A,B) \quad \text{ and } \quad \sigma^s(\lambda A,\lambda B)=\lambda^s \sigma^s(A,B),
\end{align*}
 for any $\lambda>0$ and any Borel set $B$.

The most important property inherited from the defining sequence is that local Minkowski content and local S-content are locally determined. This property is in a way the ultimate reason, why all the localization results presented here hold.

\begin{Proposition}\label{thm:local-main2}
Let $A,A'\subset\R^d$ be closed sets and let $B\subset\R^d$ be an open set such that $A\cap B=A'\cap B$. Let $s\in[0,d)$.
\begin{enumerate}
  \item[(i)] If the local Minkowski contents $\mu^s(A,\cdot)$ and $\mu^s(A',\cdot)$ exist, then
  \begin{align}
    \label{eq:locality1} \mu^s(A,C)=\mu^s(A',C)
  \end{align}
  for any Borel set $C\subset B$. 
  That is, we have $\mu^s|_B(A,\cdot)=\mu^s|_B(A',\cdot)$.
  \item[(ii)] Similarly, if the local S-contents $\sigma^s(A,\cdot)$ and $\sigma^s(A',\cdot)$ exist, then
  \begin{align}
    \label{eq:locality1} \sigma^s(A,C)=\sigma^s(A',C)
  \end{align}
  for any Borel set $C\subset B$. 
  That is,
   we have $\sigma^s|_B(A,\cdot)=\sigma^s|_B(A',\cdot)$.
\end{enumerate}
\end{Proposition}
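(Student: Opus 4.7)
The core geometric observation is that if two closed sets agree on an open set $B$, then their small parallel neighbourhoods also agree on any compact subset $K \subset B$, provided the parallel radius is small enough. Specifically, let $K \subset B$ be compact and set $\delta := d(K, \R^d \setminus B) > 0$; I claim that $A_r \cap K = A'_r \cap K$ whenever $0 < r < \delta$. Indeed, if $x \in A_r \cap K$ there is $a \in A$ with $|x - a| < r$, and for every $y \in \R^d \setminus B$,
$$
|a - y| \geq |x - y| - |x - a| > \delta - r > 0,
$$
so $a \in B$. Hence $a \in A \cap B = A' \cap B \subset A'$, giving $x \in A'_r$. Symmetry yields the reverse inclusion.

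Next I promote this to open test regions: if $U \subset B$ is open with $\cl{U}$ compact and $\cl{U} \subset B$, then $\delta' := d(\cl{U}, \R^d \setminus B) > 0$ and the above argument applied to $\cl{U}$ gives $A_r \cap U = A'_r \cap U$ for every $0 < r < \delta'$. Since $U$ is open, for any $x \in U$ every sufficiently small neighbourhood of $x$ is contained in $U$, so whether $x$ lies on the topological boundary of $A_r$ depends only on $A_r \cap U$; consequently $\bd A_r \cap U = \bd A'_r \cap U$ for the same range of $r$.

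Both assertions then follow by testing against continuous compactly supported functions. For any continuous $f : \R^d \to \R$ with $\spt f \subset B$, choose an open set $U$ with $\spt f \subset U \subset \cl{U} \subset B$ and $\cl{U}$ compact. For $r < d(\cl{U}, \R^d \setminus B)$ the previous step yields
\begin{align*}
\int f \, d\mu_r^s(A,\cdot) = \int f \, d\mu_r^s(A',\cdot), \qquad
\int f \, d\sigma_r^s(A,\cdot) = \int f \, d\sigma_r^s(A',\cdot),
\end{align*}
because on $U$ the integrands agree pointwise and $f$ vanishes outside $U$. Passing to the limit $r \se 0$ and invoking the assumed existence of the local Minkowski (resp.\ S-) contents, one obtains $\int f \, d\mu^s(A,\cdot) = \int f \, d\mu^s(A',\cdot)$ and the analogous identity for $\sigma^s$. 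Since the set $\{f \in C_c(\R^d) : \spt f \subset B\}$ is a determining class for Radon measures on the open set $B$, the restrictions $\mu^s(A,\cdot)|_B$ and $\mu^s(A',\cdot)|_B$ coincide, and likewise for $\sigma^s$. This is exactly the stated equality for every Borel $C \subset B$.

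The only mild subtlety is uniform treatment of the bounded and unbounded cases: when $A$ (or $A'$) is compact the hypothesis \eqref{eq:mu_weakconv}--\eqref{eq:sigma_weakconv} is a weak limit, while for unbounded sets the hypothesis is vague convergence as in Theorem~\ref{thm:local-main3}. In both regimes convergence is characterised by convergence of integrals against continuous compactly supported test functions, so no change in argument is needed; this is the one place where one must be slightly careful, but it presents no real obstacle.
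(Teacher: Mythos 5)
Your proof is correct and takes a genuinely more elementary route than the paper. The paper works through the metric-projection preimages $G_i := \pi_A^{-1}(B_i)$ of a relatively compact exhaustion $(B_i)$ of $B$, proves the set identity $A_r \cap G_i = A'_r \cap G_i$ for $0 < r < \delta_i/3$ (a delicate statement, since $G_i$ can reach far beyond $B$ and even be unbounded, and its proof requires verifying $\pi_{A'}(x) = \pi_A(x)$ for $x \in A_r \cap G_i$), and then feeds this into the restriction Lemma~\ref{lem:restriction} to get $\mu|_{G_i} = \mu'|_{G_i}$ before descending to $B$. You instead establish the much simpler identity $A_r \cap K = A'_r \cap K$ for compact $K \subset B$ and $r < d(K, \R^d \setminus B)$ by a two-line triangle-inequality argument (plus the standard topological fact that it passes to boundaries on open $U$), and then finish by testing against $C_c$ functions supported in $B$, which is exactly what vague convergence gives you. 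This avoids the metric-projection machinery and Corollary~\ref{cor:meas-uniqueness} entirely. What the paper's approach buys in exchange is that the intermediate statement about the preimages $G_i$ is itself informative — it says the local contents agree on the whole normal fibre bundle over $B_i$, not just on $B_i$ itself — and it keeps the proof technique uniform with the rest of Section~4, where preimages $\piAB$ are the central objects. For the stated proposition, though, your direct argument is cleaner.

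One small remark: your last paragraph correctly notes that both weak and vague convergence are characterised by $C_c$ test functions, so there is no real distinction to handle; you could say this even more simply by pointing out that in the compact case weak convergence implies vague convergence, and vague convergence is all you use.
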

\begin{proof}
Let $(B_i)_{i\in\N}$ be an increasing sequence of open, relatively compact subsets of $B$ such that $\bigcup_i B_i=B$. The relative compactness of ${B_i}$ implies in particular that $B_i\cap A$ is bounded, which ensures that all the sets $A_r\cap \pi_A^{-1}(B_i)$ are bounded. Moreover, we have $\overline{B_i}\subset B$, such that $\delta_i:=d(B_i\cap A,B^c)>0$. Let $G_i:=\pi_A^{-1}(B_i)$. Note that $G_i$ is open.

Let $\mu_r:=\mu^s_r(A,\cdot)$, $\mu:=\mu^s(A,\cdot)$ and similarly $\mu'_r:=\mu^s_r(A',\cdot)$, $\mu':=\mu^s(A',\cdot)$. By Lemma~\ref{lem:restriction}(i), the assumed existence of the local Minkowski contents of $A$ and $A'$ implies the vague convergences $\mu_r|_{G_i}\vlim\mu|_{G_i}$ and $\mu'_r|_{G_i}\vlim\mu'|_{G_i}$ as $r\se 0$. We will show that the limit measures $\mu|_{G_i}$ and $\mu'|_{G_i}$ coincide for each $i\in\N$.

Fix $i\in\N$ for a moment.  We claim that, for any $0<r<\frac{\delta_i}3$,
  \begin{align}
     \label{eq:locality-proof1} A_r\cap G_i=A'_r\cap G_i.
  \end{align}
To see this let $x\in A_r\cap\pi^{-1}_A(B_i)$ and let $y:=\pi_A(x)$. Then $d(x,y)=d(x,A)<r$ and $y\in A\cap B_i$ and therefore, since $B_i\subset B$, $y\in A'\cap B_i$ and $d(x,A')\leq d(x,y)<r$. Moreover, we have $\pi_{A'}(x)=y$. (Assume not. Then there exists $y'\in A'$ such that $d(x,y')<d(x,y)<r$. Since $r<\frac{\delta_i} 3$ and $A\cap B=A'\cap B$, $y'$ belongs to $A$ as well, contradicting that $y=\pi_A(x)$ is the nearest point in $A$.) This means $x\in A'_r\cap\pi^{-1}_{A'}(B_i)$, proving one inclusion of \eqref{eq:locality-proof1}. The reverse inclusion follows by symmetry interchanging the role of $A$ and $A'$.

From \eqref{eq:locality-proof1}, we can immediately infer that
\begin{align}
     \label{eq:locality-proof2} \mu_r|_{G_i}=\mu'_r|_{G_i}
  \end{align}
for any $0<r<\frac{\delta_i} 3$, that is, the two families of measures $\{\mu_r|_{G_i}:0<r<\frac{\delta_i}3\}$ and $\{\mu'_r|_{G_i}:0<r<\frac{\delta_i}3\}$ coincide for small $r$, which implies that the limit measures as $r\se 0$ (which exist by the assumptions) also coincide. Hence we have $\mu|_{G_i}=\mu'|_{G_i}$ for each $i\in\N$.
Since the sets $G_i$ cover $\piAB$, we conclude from part (ii) of Lemma~\ref{lem:restriction}, that
$\mu|_{\piAB}=\mu|_{\piAB}$, which means in fact that $\mu|_B=\mu'|_B$, since $A\cap\piAB =A\cap B$ and the measures $\mu,\mu'$ are concentrated on $A$. This completes the proof of part (i).

The proof of part (ii) follows exactly the same lines. Just observe that the set equivalence \eqref{eq:locality-proof1} implies also that the restricted surface measures $\sigma_r^s|_{G_i}(A,\cdot)$ and $\sigma_r^s|_{G_i}(A',\cdot)$ coincide for $0<r<\frac{\delta_i}3$.
\end{proof}

\section{Average local Minkowski content and S-content}

To complete the picture, we consider also localized versions of averaged contents. For $A,\Omega\subset\R^d$ such that $A\cap \Omega$ is bounded and $s\geq 0$, let
\begin{align} \label{eq:av-rel-MC}
\asM^s(A,\Omega):=\lim_{t\searrow 0} \frac{1}{|\log t|}\int_\delta^1 \frac{\lambda_d(A_r\cap \Omega)}{\kappa_{d-s} r^{d-s}} \frac{dr}{r}
\end{align}
and
\begin{align} \label{eq:av-rel-SC}
\asS^s(A,\Omega):=\lim_{t\searrow 0} \frac{1}{|\log t|}\int_\delta^1 \frac{\Ha^{d-1}(\bd A_r\cap \Omega)}{(d-s)\kappa_{d-s} r^{d-s-1}} \frac{dr}{r},
\end{align}
be the \emph{average relative Minkowski} and \emph{S-content}, respectively, of the set $A$ relative to the set $\Omega$,
whenever these Cesaro averages exist. We write $\alsM^s(A,\Omega)$, $\ausM^s(A,\Omega)$ and $\alsS^s(A,\Omega)$, $\ausS^s(A,\Omega)$ for the corresponding lower and upper average limits.
It is rather easy to see that the following general relations hold
\begin{align*}
  \lsM^s(A,\Omega)\leq \alsM^s(A,\Omega)&\leq \ausM^s(A,\Omega)\leq\usM^s(A,\Omega),\\
  \lsS^s(A,\Omega)\leq \alsS^s(A,\Omega)&\leq \ausS^s(A,\Omega)\leq\usS^s(A,\Omega).
\end{align*}

Our first point is the following localization of \cite[Lemma 4.6]{RW09}.

\begin{Lemma} \label{lem:av-contents}
  Let $A\subset\R^d$ be closed and let $B\subset\R^d$ be some Borel set such that $A\cap B$ is bounded. Let $0\leq s<d$.
  \begin{enumerate}
  \item[(i)] Then
  $$
      \ausM^s(A,\piAB)\geq\ausS^s(A,\piAB) \quad \text{ and }\quad \alsM^s(A,\piAB)\geq\alsS^s(A,\piAB).
   $$

   \item[(ii)]   If $\usM^s(A,\piAB)<\infty$, then
  $$
      \ausM^s(A,\piAB)=\ausS^s(A,\piAB) \text{ and } \alsM^s(A,\piAB)=\alsS^s(A,\piAB).
      $$
      \end{enumerate}
    In particular, if $A\subset\R^d$ is compact and $\usM^s(A)<\infty$, then the equalities in (ii) hold for any Borel set $B$.

 \end{Lemma}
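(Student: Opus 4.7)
The plan is to derive both parts from a single integration-by-parts identity applied to the Kneser function $f(r) := V_{A,B}(r) = \lambda_d(A_r \cap \piAB)$. By Lemma~\ref{lem:ma-implies-kneser}, $f$ is a Kneser function, hence monotone and absolutely continuous on compact subintervals of $(0,\infty)$, so integration by parts is legitimate. By Proposition~\ref{lem:pos-bd}, $f'(r) = \Ha^{d-1}(\bd A_r \cap \piAB)$ for a.e.\ $r > 0$, so the integrand in the definition of $\asS^s(A,\piAB)$ can be rewritten as $f'(r)/\bigl((d-s)\kappa_{d-s}\, r^{d-s}\bigr)$. The boundedness of $A \cap B$ ensures $f(1) < \infty$.

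Integration by parts applied to $r \mapsto f(r)\, r^{-(d-s)}$ on $[t,1]$ then yields, for every $0 < t < 1$,
\begin{align*}
\int_t^1 \frac{f'(r)}{(d-s)\kappa_{d-s}\, r^{d-s}}\, dr + \frac{f(t)}{(d-s)\kappa_{d-s}\, t^{d-s}} = \frac{f(1)}{(d-s)\kappa_{d-s}} + \int_t^1 \frac{f(r)}{\kappa_{d-s}\, r^{d-s+1}}\, dr.
\end{align*}
Dividing by $|\log t|$ and denoting by $S(t), M(t)$ the truncated averages whose $\limsup$ and $\liminf$ as $t \searrow 0$ recover $\ausS^s(A,\piAB), \alsS^s(A,\piAB)$ and $\ausM^s(A,\piAB), \alsM^s(A,\piAB)$ respectively, this becomes
\begin{align*}
S(t) + b(t) = M(t) + d(t), \text{ where } b(t) := \tfrac{f(t)}{(d-s)\kappa_{d-s}\, t^{d-s}|\log t|}, \quad d(t) := \tfrac{f(1)}{(d-s)\kappa_{d-s}|\log t|},
\end{align*}
with $b(t) \geq 0$ and $d(t) \to 0$ as $t \searrow 0$.

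For part (i), the identity together with $b \geq 0$ gives $S(t) \leq M(t) + d(t)$; taking $\limsup$ (resp.\ $\liminf$) as $t \searrow 0$ and using $d(t) \to 0$ yields $\ausS^s(A,\piAB) \leq \ausM^s(A,\piAB)$ and $\alsS^s(A,\piAB) \leq \alsM^s(A,\piAB)$. For part (ii), the hypothesis $\usM^s(A,\piAB) < \infty$ forces $f(t)/(\kappa_{d-s}\, t^{d-s})$ to be bounded as $t \to 0$, so the extra $1/|\log t|$ factor makes $b(t) \to 0$ as well. The identity then gives $S(t) - M(t) = d(t) - b(t) \to 0$, so $S$ and $M$ share their $\limsup$ and $\liminf$, which are the claimed equalities. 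The final clause (compact $A$ with $\usM^s(A) < \infty$) is immediate from the monotonicity $\usM^s(A,\piAB) \leq \usM^s(A)$.

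The only technical point of substance is the justification of the integration by parts, which rests on the absolute continuity of $V_{A,B}$ (a consequence of the Kneser property via monotonicity and local Lipschitzness in Kneser's construction) and on the a.e.\ identification $f'(r) = \Ha^{d-1}(\bd A_r \cap \piAB)$ from Proposition~\ref{lem:pos-bd}. Both ingredients are at hand from the preceding sections, so the remainder of the argument is essentially mechanical.
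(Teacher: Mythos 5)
Your proof is correct and follows essentially the same approach as the paper: both reduce the claim to the single identity
\[
\int_t^1 \frac{f'(r)}{(d-s)\kappa_{d-s} r^{d-s}}\, dr + \frac{f(t)}{(d-s)\kappa_{d-s}\, t^{d-s}}
= \frac{f(1)}{(d-s)\kappa_{d-s}} + \int_t^1 \frac{f(r)}{\kappa_{d-s}\, r^{d-s+1}}\, dr,
\]
divide by $|\log t|$, observe that the $f(1)$-term always vanishes, and that the $f(t)$-term is non-negative (giving (i)) resp.\ bounded under $\usM^s(A,\piAB)<\infty$ (giving (ii)); the closing assertion uses monotonicity $\usM^s(A,\piAB)\le\usM^s(A)$.

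The only difference is how the identity is obtained: the paper substitutes $V_{A,B}(r)=\int_0^r S(\rho)\,d\rho$ (from Theorem~\ref{thm:loc-diff-points}) into $v(t)$ and applies Fubini, while you integrate $r\mapsto f(r)r^{-(d-s)}$ by parts on $[t,1]$. Both rest on the same facts (absolute continuity of the Kneser function $V_{A,B}$ on compact subintervals of $(0,\infty)$, plus $V'_{A,B}(r)=\Ha^{d-1}(\bd A_r\cap\piAB)$ a.e.). Your integration-by-parts version has the small advantage of never looking at $r$ near $0$, so it sidesteps the implicit use of $V_{A,B}(0^+)=0$ in the Fubini derivation (not an actual issue, since when $V_{A,B}(0^+)>0$ both averaged relative Minkowski contents are infinite and the lemma is trivial, but your route avoids the remark). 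In substance the two arguments coincide.
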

 \begin{proof}
  We adapt the argument in the proof of \cite[Lemma 4.6]{RW09}:
  For $r>0$, write $S(r):=\Ha^{d-1}(\bd A_r\cap\piAB)$. (Note that, by the boundedness of $A\cap B$, the set $\bd A_r\cap\piAB$ is a bounded $(n-1)$-rectifiable set with finite measure. Hence $S(r)$ is a finite number for each fixed $r>0$.) For $0<t\le 1$, define
\[
v(t):=\int_t^1 \frac{V_{A,B}(r)}{\kappa_{d-s}r^{d-s}} \frac{dr}{r} \quad\text{ and }\quad w(t):=\int_t^1 \frac{S(r)}{(d-s)\kappa_{d-s}r^{d-s-1}} \frac{dr}{r}.
\]
Our first claim is that
\begin{equation} \label{eq:v-w}
v(t)= w(t) +\frac{1}{d-s} \frac{V_{A,B}(t)}{\kappa_{d-s} t^{d-s}} - \frac{1}{(d-s)\kappa_{d-s}} V_{A,B}(1).
\end{equation}
By  Theorem~\ref{thm:loc-diff-points}, we have
\[
v(t)=\int_t^1 \int_0^r S(\rho) d\rho \frac{dr}{\kappa_{d-s}r^{d-s+1}},
\]
and, interchanging the order of integration, this leads to
\begin{eqnarray*}
v(t)&=&\frac 1{\kappa_{d-s}}\left[
\int_0^t S(\rho)  \int_t^1  \frac{dr}{r^{d-s+1}} d\rho + \int_t^1 S(\rho) \int_\rho^1 \frac{dr}{r^{d-s+1}} d\rho
\right]\\
&=& \frac 1{(d-s)\kappa_{d-s}}\left[ V_{A,B}(t) \left(\frac 1{t^{d-s}}-1\right) + \int_t^1 S(\rho) \left(\frac 1{\rho^{d-s}}-1\right) d\rho\right]\\
&=& \frac 1{(d-s)\kappa_{d-s}} \left(\frac{V_{A,B}(t)}{ t^{d-s}}-V_{A,B}(t)-V_{A,B}(1)+V_{A,B}(t)\right) +  w(t).
\end{eqnarray*}
Here we have used again the relation $V_{A,B}(r)=\int_0^r S(\rho) d\rho$ as well as the assumption $s<d$. This proves \eqref{eq:v-w}.

Observe that the third term on the right in \eqref{eq:v-w} is constant. It vanishes, when dividing by $|\log t|$ and taking the limit as $t\to\infty$. The second term is non-negative.
Let $(t_n)$ be a null sequence, such that
\[
\lim_{n\to\infty} \frac{w(t_n)}{|\log t_n|}=\ausS^s(A,\piAB).
\]
Then
\[
\ausM^s(A,\piAB)\ge \limsup_{n\to\infty} \frac{v(t_n)}{|\log t_n|}\ge \ausS^s(A,\piAB).
\]
Similarly the inequality $\alsM^s(A,\piAB)\ge \alsS^s(A,\piAB)$ is obtained by choosing a sequence $(\tilde t_n)$ such that $\alsM^s(A,\piAB)$ is attained, completing the proof of (i).

If $\usM^s(A,\piAB)<\infty$ holds, then the second term on the right in \eqref{eq:v-w} is bounded by a constant. Hence, it vanishes when dividing by $|\log t|$ and taking the limit as $t\to\infty$. The equalities stated in (ii) follow at once.

  For the last assertion, in which $A$ is assumed to be bounded note that, by the monotonicity of relative Minkowski contents, we have, for any Borel set $B\subset\R^d$, $\usM^s(A,\piAB)\leq \usM^s(A)<\infty$. Hence (ii) applies.
\end{proof}

\begin{Theorem} \label{thm:av-rel-contents}
  (Average relative Minkowski contents and S-contents)
  \begin{itemize}
     \item[(i)]
     Let $A\subset\R^d$ be a closed set and $B\subset\R^d$ some Borel set such that $A\cap B$ is bounded and $\usM^D(A,\piAB)<\infty$ for some $D\in[0,d)$. Let $M\in[0,\infty)$. Then
  $$
  \asM^D(A,\piAB)=M \quad \text{ if and only if } \quad \asS^D(A,\piAB)=M.
  $$
  \item[(ii)] In particular, if $A\subset\R^d$ is compact and $\usM^D(A)<\infty$ for some $D\in[0,d)$, then
  $$
  \asM^D(A,\piAB)=\asS^D(A,\piAB),
  $$
  for any Borel set $B\subset\R^d$ such that one (and thus both) of these two average limits exist.
  \end{itemize}
\end{Theorem}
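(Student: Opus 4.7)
The plan is to deduce the theorem directly from Lemma~\ref{lem:av-contents}(ii), which has already done all of the analytic work: under the finiteness hypothesis $\usM^D(A,\piAB)<\infty$ it furnishes the two bidirectional equalities
\begin{align*}
  \ausM^D(A,\piAB)=\ausS^D(A,\piAB) \quad\text{and}\quad \alsM^D(A,\piAB)=\alsS^D(A,\piAB).
\end{align*}
Everything else in the theorem is a tautology once one recalls that $\asM^D(A,\piAB)=M$ is by definition equivalent to the coincidence $\alsM^D(A,\piAB)=\ausM^D(A,\piAB)=M$ of the lower and upper Cesaro limits, and similarly for $\asS^D$.

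For part (i), I would first assume $\asM^D(A,\piAB)=M$, rewrite this as $\alsM^D(A,\piAB)=\ausM^D(A,\piAB)=M$, and substitute the two equalities above to obtain $\alsS^D(A,\piAB)=\ausS^D(A,\piAB)=M$, i.e.\ $\asS^D(A,\piAB)=M$. The reverse implication is formally identical, read in the opposite direction; since the hypothesis $\usM^D(A,\piAB)<\infty$ is symmetric in the two contents (it is assumed on the Minkowski side but Lemma~\ref{lem:av-contents}(ii) then transfers equality to the S-side as well), no additional argument is needed.

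For part (ii), the only extra ingredient is to verify that the finiteness hypothesis of Lemma~\ref{lem:av-contents}(ii) is automatic for every Borel set $B$. This follows from the monotonicity of the upper relative Minkowski content in its second argument: since $\piAB\subset\R^d$,
\begin{align*}
  \usM^D(A,\piAB)\leq\usM^D(A,\R^d)=\usM^D(A)<\infty.
\end{align*}
Hence part (i) is applicable to every Borel set $B$, and it yields both the equivalence of existence of $\asM^D(A,\piAB)$ and $\asS^D(A,\piAB)$ and their equality whenever one (and thus both) of them exists.

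There is really no obstacle here beyond bookkeeping, since the analytic content is concentrated in the identity \eqref{eq:v-w} of Lemma~\ref{lem:av-contents}, which is itself built from the pointwise relation $(V_{A,B})'_+(r)=\Ha^{d-1}(\bd A_r\cap\piAB)$ established in Proposition~\ref{lem:pos-bd}. The only care needed is to separate the roles of existence (upper equals lower) and value of the limit, and to remember that in part (ii) the boundedness of $A$ combined with monotonicity brings every $\piAB$ into the scope of part (i).
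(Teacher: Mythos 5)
Your proposal is correct and takes essentially the same approach as the paper: both derive Theorem~\ref{thm:av-rel-contents}(i) directly from Lemma~\ref{lem:av-contents}(ii) by unpacking the Cesaro limit into its lower and upper versions, and both obtain (ii) from (i) via the monotonicity bound $\usM^D(A,\piAB)\leq\usM^D(A)<\infty$. Your write-up is merely a more explicit spelling-out of the same one-line argument given in the paper.
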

\begin{proof}
    (i) If $ \asM^D(A,\piAB)=M$ holds, then it follows from Lemma~\ref{lem:av-contents} (ii), that $\asS^D(A,\piAB)$ exists and equals $M$, and vice versa.

    (ii) This follows from (i), taking again into account that, by the monotonicity of the relative contents, $\usM^s(A,\piAB)\leq \usM^s(A)<\infty$ for any Borel set $B\subset\R^d$.
\end{proof}

In analogy with the results for local Minkowski content and local S-content in Theorem~\ref{thm:local-main}, we can now establish a corresponding relation for the averaged local contents, which are introduced as follows. First, define the rescaled average local parallel volume of a closed set $A\subset\R^d$ for any $t>0$ by
\begin{align} \label{eq:av_loc_vol}
\widetilde{\mu}_t^s(A,\cdot)&:=\frac 1{|\log t|}\int_t^1\frac{\lambda_d(A_r\cap\cdot)}{\kappa_{d-s} r^{d-s}}\frac{dr}r,
\end{align}
where $s\in[0,d]$. It is obvious that, for any $s\geq 0$ and $t>0$, $\mu_t^s(A,\cdot)$ is a locally finite measure concentrated on the closed parallel set $A_{\leq t}$ of $A$. Moreover, it is easy to check that these measures are inner regular. Hence the family $\{\widetilde{\mu}_r^s(A,\cdot):r\in (0,1])\}$ consists of Radon measures. The ($s$-dimensional) \emph{average local Minkowski content} $\widetilde{\mu}^s(A,\cdot)$ of $A$ is then defined as the vague limit of these measures as $r\searrow 0$ (provided it exists). 
Note that (in case it exists) $\widetilde{\mu}^s(A,\cdot)$ is a measure concentrated on $A$. Note also that the measures $\widetilde{\mu}^s_t(A,\cdot)$ depend in fact also on the upper bound of the integration interval in the definition, which we have set to 1. Using any other upper bound $u>0$ instead of $1$, will produce a different family of measures. However, the existence of the limit and even the limit measure $\widetilde\mu^s(A,\cdot)$ are not affected by the choice of $u$.
It will be clear from the statement below that for a nontrivial limit $\widetilde\mu^s(A,\cdot)$ to exist, one has to choose $s=\dim_M A$ (in the sense of the definition in \eqref{eq:def-mink-unbd} in case of an unbounded $A$).

Similarly, for any closed set $A\subset\R^d$ and $s\geq 0$, the vague limit $\widetilde{\sigma}^s(A,\cdot)$ of the appropriately rescaled surface measures
\begin{align*}
\widetilde\sigma_t^s(A,\cdot):=\frac1{|\log t|}\int_t^1\frac{\Ha^{d-1}(\bd A_r\cap \cdot)}{\kappa_{d-s} (d-s) r^{d-s-1}}\frac{dr}r, t>0
\end{align*}
as $t\searrow 0$, provided it exists, will be regarded as the ($s$-dimensional) \emph{average local S-content} of $A$, denoted by $\widetilde\sigma^s(A,\cdot)$.
Note that, similarly as for the local parallel volume, the local surface area $\widetilde{\sigma}^s_t(A,\cdot)$  is a Radon measure on $A_{\leq t}$ for any $s\geq 0$ and $t>0$.

Recall that for a bounded set $A\subset\R^d$ and $D:=\udim_M A$ the finiteness of the upper Minkowski content $\usM^D(A)$ is not ensured in general, and has to be assumed for certain results. The corresponding condition for unbounded sets $A$ (still with $D:=\udim_M A$) reads as follows: assume there exists an increasing sequence $(B_n)_{n\in\N}$ of bounded sets such that
\begin{align}
  \label{eq:upper-Mink-finite}\usM^D(A,\pi_A^{-1}(B_n))<\infty \text{ for each } n\in\N.
\end{align}
This condition is required in the next statement to ensure that the average local contents of unbounded sets are well enough behaved. In particular, we need it to be able to apply Lemma~\ref{lem:av-contents} and Theorem~\ref{thm:av-rel-contents}. For bounded sets this condition obviously reduces to the finiteness of the upper Minkowski content $\usM^D(A)<\infty$.

%
%

\begin{Theorem}\label{thm:local-average}
Let $A\subset\R^d$ be a closed set with $D:=\udim_M A\in[0,d)$. Assume that condition \eqref{eq:upper-Mink-finite} holds. Let $\mu$ be a Radon measure on $A$. Then
\begin{align} \label{eq:mu_avweakconv}
  \widetilde\mu_t^D(A,\cdot)\vlim\mu \text{ as } t\searrow 0
\end{align}
if and only if
\begin{align} \label{eq:sigma_avweakconv}
  \widetilde\sigma_t^D(A,\cdot)\vlim\mu \text{ as } t\searrow 0.
 \end{align}
If \eqref{eq:mu_avweakconv} (and thus \eqref{eq:sigma_avweakconv}) holds and $\mu\neq 0$, then $D=\dim_M A$.
Moreover, if \eqref{eq:mu_avweakconv} holds and $A$ is compact (which implies that $\mu$ is finite), then the total mass $\mu(A)$ necessarily coincides with the average Minkowski content and the average S-content of $A$, i.e.,
$$
\mu(A)=\asM^D(A)=\asS^D(A).
$$
\end{Theorem}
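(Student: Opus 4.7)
The argument mirrors the proof of Theorem~\ref{thm:local-main3}, with Lemma~\ref{lem:av-contents} and Theorem~\ref{thm:av-rel-contents} replacing Theorem~\ref{thm:u-rel-contents} and Theorem~\ref{thm:relMmeas} throughout. The plan is to reduce the vague convergence to a restricted weak convergence on bounded windows, to run the averaged analogue of Proposition~\ref{prop:local-main2} on each window, and then to read off the dimension statement and the total mass formula.

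First I would pick an increasing sequence $(B_i)_{i\in\N}$ of bounded open subsets of $\R^d$ with $\bigcup_i B_i=\R^d$ and $\mu(\bd B_i)=0$, and set $G_i:=\pi_A^{-1}(B_i)$. Assuming \eqref{eq:mu_avweakconv}, Lemma~\ref{lem:restriction}(i) yields $\widetilde\mu_t^D|_{G_i}\vlim\mu|_{G_i}$, and the absence of $\mu$-mass on $\bd G_i$ upgrades this to weak convergence exactly as in the proof of Theorem~\ref{thm:local-main3}. I would then establish an averaged version of Proposition~\ref{prop:local-main2}: tightness of the family $\{\widetilde\sigma_t^D|_{G_i}:t\in(0,1]\}$ is transported from the tightness of the $\widetilde\mu_t^D|_{G_i}$ family via Lemma~\ref{lem:av-contents}(ii), which applies precisely because $\usM^D(A,G_i)<\infty$ by hypothesis~\eqref{eq:upper-Mink-finite}. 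Convergence on the determining class $\{\pi_A^{-1}(C):C\subset B_i\text{ Borel},\,\mu(\bd\pi_A^{-1}(C))=0\}$ follows by combining the Portmanteau theorem (to read off $\asM^D(A,\pi_A^{-1}(C))=\mu(\pi_A^{-1}(C))$) with Theorem~\ref{thm:av-rel-contents}(i) (to transfer this equality to $\asS^D$), the finiteness of the upper Minkowski content being inherited from that on $G_i$ by monotonicity. Corollary~\ref{cor:meas-uniqueness}(iii) identifies each subsequential weak limit of $\widetilde\sigma_t^D|_{G_i}$ with $\mu|_{G_i}$, and Lemma~\ref{lem:restriction}(ii) reassembles the pieces to give $\widetilde\sigma_t^D\vlim\mu$. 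The reverse implication is symmetric.

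For the supplementary claims: if $\mu\neq 0$ then $\mu(G_i)>0$ for some $i$, so $\asM^D(A,G_i)>0$; since $V_{A,B_i}$ is a Kneser function, an averaged analogue of the Kneser-function argument used in the last part of the proof of Theorem~\ref{thm:local-main3} (based on results in \cite{RW13}) yields $\ldim_M A\geq D$, and hence $\dim_M A=D$. For compact $A$, pick an open bounded $B\supset A$ with $\mu(\bd B)=0$; then $\pi_A^{-1}(B)\supset\Unp(A)$, and since $\lambda_d(\exo A)=0$ we have $V_{A,B}(r)=V_A(r)$ for every $r>0$, while Theorem~\ref{thm:loc-diff-points} gives $\Ha^{d-1}(\bd A_r\cap\pi_A^{-1}(B))=\Ha^{d-1}(\bd A_r)$ for all but countably many $r$. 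Integrating then identifies $\asM^D(A,\pi_A^{-1}(B))=\asM^D(A)$ and $\asS^D(A,\pi_A^{-1}(B))=\asS^D(A)$, and since $\mu(\pi_A^{-1}(B))=\mu(A)$ one recovers $\mu(A)=\asM^D(A)=\asS^D(A)$. The main obstacle is the tightness step, which depends essentially on hypothesis~\eqref{eq:upper-Mink-finite}; a secondary subtlety is the lower-dimension bound in the averaged setting, where the pointwise argument from Theorem~\ref{thm:local-main3} must be supplemented by a Kneser-function estimate.
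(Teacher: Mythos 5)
Your proposal matches the paper's own proof in structure and in every load-bearing step: restrict to windows $G_i=\pi_A^{-1}(B_i)$ with $\mu(\partial B_i)=0$, upgrade vague to weak convergence on each $G_i$, prove the averaged version of Proposition~\ref{prop:local-main2} (the paper's Proposition~\ref{thm:local-average2}) by using Lemma~\ref{lem:av-contents}(ii) for tightness and Theorem~\ref{thm:av-rel-contents} for the identification on the determining class $\{\pi_A^{-1}(C)\}$, and reassemble via Lemma~\ref{lem:restriction}(ii). The hypothesis \eqref{eq:upper-Mink-finite} enters exactly where you put it.

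Two points of divergence, both minor but worth spelling out. First, for the total-mass formula your detour through a bounded open $B\supset A$ with $\mu(\partial B)=0$ and the identities $V_{A,B}=V_A$, $\Ha^{d-1}(\partial A_r\cap\pi_A^{-1}(B))=\Ha^{d-1}(\partial A_r)$ is correct but more elaborate than the paper's one-liner: when $A$ is compact, the measures $\widetilde\mu_t^D(A,\cdot)$ all live in the fixed compact $\overline{A_1}$, so vague convergence is already weak convergence with preservation of total mass, and $\mu(A)=\lim_t\widetilde\mu_t^D(A,\R^d)=\asM^D(A)$ directly. Second, and more substantively, your caution about the lower-dimension bound is well placed. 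In the proof of Theorem~\ref{thm:local-main3} the input is the full relative Minkowski content $\sM^s(A,\pi_A^{-1}(B_i))>0$, which immediately gives $\lsM^t=\infty$ for $t<s$. Here one only gets $\alsM^s(A,\pi_A^{-1}(B_i))>0$, and since in general $\alsM^s\geq\lsM^s$ can be strict, the implication $\alsM^s>0\Rightarrow\lsM^t=\infty$ for $t<s$ is not purely a rescaling argument; it really does require that the relevant function $V_{A,B_i}$ is a Kneser function with $V_{A,B_i}(0)=0$ and with $\usM^s<\infty$ enforced by \eqref{eq:upper-Mink-finite}. The paper asserts this step in a single sentence without elaboration; you are right to flag it as needing a Kneser-function estimate rather than the pointwise argument of Theorem~\ref{thm:local-main3}. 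To be fully rigorous your proposal should supply that estimate explicitly (a reference to a specific proposition in \cite{RW13} would be cleaner than the general pointer you give), but your identification of the issue is correct and goes beyond what the paper writes.
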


 For the proof of Theorem~\ref{thm:local-average}, we need an analogue of Proposition~\ref{prop:local-main2} for restrictions of average local contents to an open set.

\begin{Proposition}\label{thm:local-average2}
Let $A\subset\R^d$ be a closed set and $s\in[0,d)$. Let $B\subset\R^d$ be open and bounded and set $\tilde B:=\piAB$. Assume that $\usM^s(A,\tilde B)<\infty$. 
Let $\mu$ be a finite Radon measure on $A\cap B$. 
Then
\begin{align} \label{eq:mu_avweakconv2}
  \widetilde\mu_r^s|_{\tilde B}(A,\cdot)\wlim\mu \text{ as } r\searrow 0
\end{align}
if and only if
\begin{align} \label{eq:sigma_avweakconv2}
  \widetilde\sigma_r^s|_{\tilde B}(A,\cdot)\wlim\mu \text{ as } r\searrow 0.
 \end{align}
\end{Proposition}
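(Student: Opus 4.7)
The plan is to follow the argument of Proposition~\ref{prop:local-main2} almost verbatim, with the non-averaged ingredients replaced by their averaged counterparts. Specifically, where that proof invoked Theorem~\ref{thm:relMmeas} and Corollary~\ref{cor:rel-contents-zero} to pass between relative Minkowski and S-contents, I would use Theorem~\ref{thm:av-rel-contents}(i); and where it relied on Theorem~\ref{thm:u-rel-contents} for the tightness step, I would use Lemma~\ref{lem:av-contents}(ii). The extra hypothesis $\usM^s(A,\tilde B)<\infty$ in the statement is there precisely to activate both of these averaged statements for all the subsets of $\tilde B$ arising in the proof, via the monotonicity $\usM^s(A,\piAC)\le\usM^s(A,\tilde B)<\infty$ for any Borel $C\subset B$.

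To prove the forward implication \eqref{eq:mu_avweakconv2}$\Rightarrow$\eqref{eq:sigma_avweakconv2}, I would first appeal to the Portmanteau theorem to extract the equality $\asM^s(A,\piAC)=\mu(\piAC)$ for every Borel $C\subset\R^d$ satisfying $\mu(\bd\piAC)=0$. Next I would establish the tightness of the family $\{\widetilde\sigma_r^s|_{\tilde B}(A,\cdot):r\in(0,1]\}$ exactly in the style of Proposition~\ref{prop:local-main2}: for each $\eps>0$ fix, using tightness of the $\widetilde\mu$-family, a compact $K\subset\tilde B$ with $\widetilde\mu_r^s|_{\tilde B}(A,\tilde B\setminus K)<\eps$ for all $r\in(0,1]$, enlarge it to a compact metrically associated $\tilde K\subset\tilde B$ as there, and pass to the limsup to obtain $\ausM^s(A,\tilde B\setminus\tilde K)\le\eps$. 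The crucial move is then Lemma~\ref{lem:av-contents}(ii), which under the standing hypothesis yields $\ausS^s(A,\tilde B\setminus\tilde K)=\ausM^s(A,\tilde B\setminus\tilde K)\le\eps$; after a harmless further enlargement of $\tilde K$ by inner regularity this bounds $\widetilde\sigma_r^s|_{\tilde B}(A,\tilde B\setminus\tilde K)$ uniformly in small $r$, giving tightness.

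Prohorov's theorem then produces, along any null sequence $(r_n)$, a subsequence $(r_{n(i)})$ along which $\widetilde\sigma_{r_{n(i)}}^s|_{\tilde B}(A,\cdot)\wlim\nu$ for some finite Radon measure $\nu$ concentrated on $A\cap\overline B$. To identify $\nu=\mu$ I would invoke Corollary~\ref{cor:meas-uniqueness}(iii), which reduces the task to checking $\nu(\piAC)=\mu(\piAC)$ for all Borel $C\subset B$ with $\mu(\bd\piAC)=0$. For any such $C$, Theorem~\ref{thm:av-rel-contents}(i) converts the Portmanteau-derived identity $\asM^s(A,\piAC)=\mu(\piAC)$ into $\asS^s(A,\piAC)=\mu(\piAC)$, and this last value is also $\lim_i\widetilde\sigma_{r_{n(i)}}^s|_{\tilde B}(A,\piAC)=\nu(\piAC)$, so $\nu=\mu$. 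Independence of the subsequence then upgrades convergence along $(r_n)$ to the full weak convergence claimed in \eqref{eq:sigma_avweakconv2}. The reverse implication is obtained by interchanging the roles of $\widetilde\mu$ and $\widetilde\sigma$ throughout.

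The main obstacle is the tightness argument. Unlike Theorem~\ref{thm:u-rel-contents}, which unconditionally sandwiches $\usS^s$ and $\usM^s$ up to a dimensional constant, Lemma~\ref{lem:av-contents} gives only the one-sided bound $\ausM\ge\ausS$ for free, and the converse (needed for the forward direction) only under the finiteness assumption $\usM^s(A,\piAB)<\infty$. This is the sole reason the finiteness hypothesis $\usM^s(A,\tilde B)<\infty$ appears in the statement; the rest of the argument is structurally identical to the non-averaged Proposition~\ref{prop:local-main2}.
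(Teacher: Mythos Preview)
Your proposal is correct and follows essentially the same approach as the paper's own proof: mirror Proposition~\ref{prop:local-main2}, replacing Theorem~\ref{thm:u-rel-contents} by Lemma~\ref{lem:av-contents}(ii) for the tightness step and Theorem~\ref{thm:relMmeas}/Corollary~\ref{cor:rel-contents-zero} by Theorem~\ref{thm:av-rel-contents} for the identification $\nu=\mu$, with the finiteness hypothesis propagated to subsets via monotonicity. One small slip in your closing commentary: for the \emph{forward} direction's tightness the free inequality $\ausS\le\ausM$ from Lemma~\ref{lem:av-contents}(i) already suffices; it is the \emph{reverse} direction's tightness (and both directions' identification step via Theorem~\ref{thm:av-rel-contents}) that genuinely require the finiteness assumption $\usM^s(A,\tilde B)<\infty$.
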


\begin{proof}
   The proof follows essentially the argument in the proof of Proposition~\ref{prop:local-main2}. More precisely, the proof is literally the same up to \eqref{eq:tightness-proof} with $\mu_r^s(A,\cdot)$ and $\sigma_r^s(A,\cdot)$ replaced by $\widetilde\mu_r^s(A,\cdot)$ and $\widetilde\sigma_r^s(A,\cdot)$, respectively. Then one can use the assumed finiteness of $\usM^s(A,\tilde B)$ to infer from Lemma~\ref{lem:av-contents}(ii) that
   $$
\limsup_{n\to\infty} \widetilde\sigma_{r_n}^s|_{\tilde B}(A,\tilde B\setminus\tilde K)\leq \limsup_{r\se 0} \widetilde\sigma_r^s|_{\tilde B}(A,\tilde B\setminus\tilde K)=\ausS^s(A,\tilde B\setminus \tilde K)
\leq \eps,
$$
which implies similarly as in the proof of Proposition~\ref{prop:local-main2} that
$
\widetilde\sigma_{r_n}^s|_{\tilde B}(A,\tilde B\setminus\tilde K)\le 2\eps
$
for all $n\in\N$ and therefore the tightness of the family $\{\widetilde \sigma_{r_n}^s|_{\tilde B}(A,\cdot):n\in\N\}$.
Proceeding now as in the proof of Proposition~\ref{prop:local-main2}, one considers the limit measure $\nu$ of a converging subsequence in this family and has to show $\nu=\mu$, for which, by Corollary~\ref{cor:meas-uniqueness}(iii), it suffices to prove that $\nu(\piAC)=\mu(\piAC)$ for all Borel sets $C\subset B$ with $\mu(\bd \piAC)=0$. Let $C$ be such a set. Since $\widetilde \mu_r^s|_{\tilde B}\wlim \mu$ as $r\se 0$, by the Portemanteau Theorem, we have in particular that
$$
\asM^s(A,\piAC)=\lim_{r\se 0} \widetilde\mu_r^s|_{\tilde B}(A,\piAC)=\mu(\piAC),
$$
which, by Theorem~\ref{thm:av-rel-contents}, implies that also the average limit
$$
\asS^s(A,\piAC)=\lim_{r\se 0} \widetilde\sigma_r^s|_{\tilde B}(A,\piAC)=\lim_{i\to\infty} \widetilde\sigma_{r_{n(i)}}^s|_{\tilde B}(A,\piAC)=\nu(\piAC)
$$
exists and equals $\mu(\piAC)$. This shows $\nu=\mu$ and completes the proof of the implication \eqref{eq:mu_avweakconv2} $\Rightarrow $ \eqref{eq:sigma_avweakconv2}.

The reverse implication is proved in the same way with the roles of the families $\widetilde \mu_r^s|_{\tilde B}(A,\cdot)$ and $\widetilde \sigma_r^s|_{\tilde B}(A,\cdot)$ interchanged.
\end{proof}

\begin{proof}[Proof of Theorem~\ref{thm:local-average}]
   The proof of the equivalence of \eqref{eq:mu_avweakconv} and \eqref{eq:sigma_avweakconv} is literally the same as the one in the proof of Theorem~\ref{thm:local-main3} when the measures $\mu_r$ and $\sigma_r$ are defined by $\mu_r:=\widetilde\mu_r^s(A,\cdot)$ and $\sigma_r:=\widetilde{\sigma}_r^s(A,\cdot)$ for $r>0$ instead and when Proposition~\ref{thm:local-average2} is applied instead of Proposition~\ref{thm:local-main2} in the relevant place.

   To see that $s=\dim_M A$ it is enough to show that $\ldim_M A\geq s$ (since $\udim_M A=s$ was assumed). For this let $(B_i)_{i\in\N}$ be an increasing sequence of bounded open sets. The $B_i$ are relatively compact and so,
 since $\mu\neq 0$, the vague convergence implies there exists some $i_0\in\N$ such that $\alsM^s(A,\pi_A^{-1}(B_i))=\liminf_{t\se 0} \widetilde\mu^s_t(A,\pi_A^{-1}(B_i))\geq\mu(\pi_A^{-1}(B_i))>0$ for any $i\geq i_0$. This implies $\lsM^t(A,\pi_A^{-1}(B_i))=\infty$ for any $t<s$ and therefore $\ldim_M A\geq s$. Thus $\dim_M A$ exists and equals $s$.

 The last assertion follows from the fact that in case of a compact set $A$, the vague convergence is in fact weak convergence such that the mass is preserved.
\end{proof}

\section{Applications}

We briefly discuss some applications of the results of the previous sections.

\paragraph{\bf Self-similar sets.} Let $K\subset\R^d$ be a self-similar set satisfying the open set condition (OSC). That is, there is an iterated function system $\{S_1,S_2,\ldots,S_N\}$ consisting of contracting similarities $S_i:\R^d\to\R^d$ with contraction ratios $0<r_i<1$, $i=1,\ldots,N$ such that $K$ is the unique nonempty compact set satisfying the invariance relation $K=\bigcup_{i=1}^N S_iK$.
Let $D:=\dim_M K$ be the Minkowski dimension of $K$ (which is well known to coincide with its Hausdorff dimension) and let
$$
\mu_K:=\Ha^D(K)^{-1}\cdot \Ha^D|_K
$$
be the normalized $D$-dimensional Hausdorff measure on $K$.

In \cite{Winter08}, it was shown that for such self-similar sets $K$ of nonlattice type the local Minkowski content $\mu^D(K,\cdot)$ exists, that is,
the weak limit of the measures $\mu_r^D(K,\cdot)$ as $r\searrow 0$ exists and coincides with the measure $\sM^D(K)\cdot \mu_K$. Note that such $K$ are well known to be Minkowski measurable, i.e. $0<\sM^D(K)<\infty$.

By Theorem~\ref{thm:local-main}, we can now infer immediately, that also the local S-content, $\sigma^D(K,\cdot)$ of the set $K$ exists and is given by
$$
\sigma^D(K,\cdot)=\sM^D(K)\cdot \mu_K=\sS^D(K)\cdot \mu_K.
$$
This result was obtained in \cite{RW09} by a direct computation of the weak limit, but it is now an immediate consequence of the general Theorem~\ref{thm:local-main}. Similarly, the existence of the average local Minkowski content $\widetilde{\mu}^D(A,\cdot)$ was shown in \cite{Winter08} for any self-similar set $A\subset\R^d$ satisfying OSC and the existence of the average local S-content $\widetilde{\sigma}^D(A,\cdot)$ for such sets was shown in \cite{RW09}. The latter can now be deduced directly from the former using Theorem~\ref{thm:local-average}. 

\paragraph{\bf Self-conformal sets: $C^{1+\alpha}$-images of self-similar sets.} Let $K\subset\R^d$ be a self-similar set as above but assume additionally that the strong separation condition (SSC) is satisfied, i.e.\ $S_iF\cap S_j F=\emptyset$ for all $i\neq j$. Let $g:U\to\R^d$ be a conformal diffeomorphism defined on an open set $U$ containing the $\frac 12$-parallel set $K_{1/2}$ of $K$ and assume that $|g'|$ is $\alpha$-H\"older continuous for some $\alpha>0$. Then the set $F:=g(K)$ satisfies the invariance relation
$$
F=\bigcup_{i=1}^N g S_i g^{-1} F,
$$
in which the maps $\Psi_i:=g S_i g^{-1}$ are conformal but not necessarily contractions. However, it can be shown that some iterate of the system $\{\Psi_1,\ldots,\Psi_N\}$ consists of contractions and $F$ is thus the invariant set of a conformal IFS satisfying the SSC, see e.g.\cite{KK12,MU96} for details.

In \cite[Corollary~1.15]{FK12}, it was shown that for such $C^{1+\alpha}$-images $F$ of a self-similar set $K$, the average local Minkowski content $\widetilde{\mu}^D(F,\cdot)$ exists and in the nonlattice case also the local Minkowski content $\mu^D(F,\cdot)$. More precisely, the authors proved that the average Minkowski content $\asM^D(F)$ of $F$ exists (and is positive and finite) and given explicitly in terms of $K$ and the function $g$ by
\begin{align}
   \label{eq:Mink-F-B}
   \asM^D(F)=\asM^D(K)\cdot \int_K |g'|^D d\mu_K.
\end{align}
Moreover, this relation localizes in the sense that $\widetilde{\mu}^D(F,\cdot)$ is absolutely continuous with respect to the push forward $g_*\widetilde{\mu}^D(K,\cdot)$ of $\widetilde{\mu}^D(K,\cdot)$ (given by $g_*\widetilde{\mu}^D(K,B):=\widetilde{\mu}^D(K,g^{-1}(B))=\asM^D(K)\cdot \mu_K(g^{-1}(B))$, $B\subset\sB(\R^d)$, cf.\ the self-similar case above) with Radon-Nikodym derivative given by
$$
\frac{d\widetilde{\mu}^D(F,\cdot)}{dg_*\widetilde{\mu}^D(K,\cdot)}=|g'\circ g^{-1}|^D.
$$
In other words,
$$
\widetilde{\mu}^D(F,B)=\int_B |g'\circ g^{-1}|^D dg_*\widetilde{\mu}^D(K,\cdot)=\asM^D(K)\cdot\int_{g^{-1}(B)} |g'|^D d\mu_K,
$$
for any Borel set $B\subset\R^d$, or, using the above relation \eqref{eq:Mink-F-B},
$$
\widetilde{\mu}^D(F,B)=\asM^D(F)\frac{\int_{g^{-1}(B)} |g'|^D d\mu_K}{\int_K |g'|^D d\mu_K}.
$$
That is, the average local Minkowski content of $F$ exists and is a multiple of the $D$-conformal measure on $F$.
For nonlattice sets $F$, also the local Minkowski content $\mu^D(F,\cdot)$ exists and all of the above relations hold for $\mu^D(F,\cdot)$ instead of $\widetilde{\mu}^D(F,\cdot)$ (and $\sM^D(F)$ instead of $\asM^D(F)$).
In \cite{FK12}, nothing is said about the S-contents or the asymptotic behaviour of the parallel surface area. Combining the results of \cite{FK12} with Theorems~\ref{thm:local-main} and \ref{thm:local-average}, we can now immediately derive analogous results about the local S-contents of such self-conformal sets.

\begin{Theorem}
  Let a self-conformal set $F\subset\R^d$ be the $C^{1+\alpha}$-image $F=g(K)$ of some self-similar set $K\subset\R^d$ as described above satisfying the SSC and let $D=\dim_M F=\dim_M K$. 
  \begin{enumerate}
    \item[(i)]  Then the average S-content $\asS^D(F)$ exists and coincides with $\asM^D(F)$. Moreover, the
  average local S-content $\widetilde{\sigma}^D(F,\cdot)$ exists and coincides with a multiple of the $D$-conformal measure $\mu_F$ on $F$. More precisely,
  $$
  \widetilde{\sigma}^D(F,\cdot)=\asS^D(F)\cdot \mu_F=\asM^D(F)\cdot\mu_F=\widetilde{\mu}^D(F,\cdot).
  $$
  \item[(ii)] If the underlying IFS generating $K$ is nonlattice, then the S-content $\sS^D(F)$ and also the local S-content exist and coincide with their average counterparts.
  \end{enumerate}
\end{Theorem}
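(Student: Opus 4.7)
The plan is to reduce the theorem to a direct application of Theorem~\ref{thm:local-average} for part (i) and of Theorem~\ref{thm:local-main} for part (ii), using the results of \cite{FK12} as input.

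For part (i), first I would verify the hypotheses of Theorem~\ref{thm:local-average}. Since $F=g(K)$ is compact (being the continuous image of the compact self-similar set $K$), condition \eqref{eq:upper-Mink-finite} reduces to the finiteness of the upper Minkowski content $\usM^D(F)$. This can be established from the fact that $F$ arises as the attractor of a conformal IFS satisfying SSC (which, via the bounded distortion principle, is known to be Ahlfors $D$-regular); alternatively, the existence and finiteness of $\asM^D(F)$ proved in \cite{FK12} together with a bounded-distortion comparison of the local parallel volumes of $F$ against those of $K$ already yields $\usM^D(F)<\infty$. Since \cite[Corollary~1.15]{FK12} gives the existence of the average local Minkowski content $\widetilde{\mu}^D(F,\cdot)$ as a vague (here weak) limit, Theorem~\ref{thm:local-average} immediately implies the existence of $\widetilde{\sigma}^D(F,\cdot)$ and the identity $\widetilde{\sigma}^D(F,\cdot)=\widetilde{\mu}^D(F,\cdot)$. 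The last assertion in Theorem~\ref{thm:local-average} (compact case) then gives the equality of the total masses $\asM^D(F)=\asS^D(F)$. Finally, substituting the explicit formula $\widetilde{\mu}^D(F,\cdot)=\asM^D(F)\cdot\mu_F$ from \cite{FK12} into this identity yields the chain of equalities in (i).

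For part (ii), I would argue identically but in the non-averaged setting. In the nonlattice case \cite[Corollary~1.15]{FK12} gives the existence of the local Minkowski content $\mu^D(F,\cdot)=\sM^D(F)\cdot\mu_F$ as a weak limit. Since $F$ is compact, Theorem~\ref{thm:local-main} applies and yields the existence of $\sigma^D(F,\cdot)$ together with $\sigma^D(F,\cdot)=\mu^D(F,\cdot)=\sM^D(F)\cdot\mu_F$ and the equality of total masses $\sS^D(F)=\sM^D(F)$.

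I expect no substantial obstacle: the only nontrivial verification is the finiteness of $\usM^D(F)$ needed as the hypothesis of Theorem~\ref{thm:local-average}, which in the compact case is essentially the only condition not directly quoted from \cite{FK12}. Once this is in place, the theorem is a turn-the-crank application of the equivalences between local/average Minkowski content and local/average S-content established in Sections~4 and 5. It is worth emphasising that no new analysis of the surface area of parallel sets $F_r$ is needed; the generality of Theorems~\ref{thm:local-main} and \ref{thm:local-average} makes the passage from Minkowski to S-content automatic.
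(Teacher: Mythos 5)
Your proposal is correct and follows essentially the same route the paper takes: the theorem in the applications section is stated without a separate proof environment, with the preceding sentence explaining it as an immediate combination of the results of \cite{FK12} with Theorems~\ref{thm:local-main} and \ref{thm:local-average}. You have merely spelled out the details, including the one hypothesis ($\usM^D(F)<\infty$ for the compact set $F$, i.e.\ condition~\eqref{eq:upper-Mink-finite}) that is not directly quoted from \cite{FK12}; your note that this follows from Ahlfors regularity (via bounded distortion) or from a comparison with the self-similar set $K$ is standard and sound.
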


\paragraph{\bf Self-conformal sets on the real line.} In \cite{KK12}, Ke\ss eb\"ohmer and Kombrink have studied general self-conformal sets on the real line satisfying the open set condition. They have shown the existence of both the average local Minkowski content and the average local S-content and, in the nonlattice case, also the existence of the local Minkowski content and the local S-content, cf. \cite[Theorem~2.11]{KK12}. With Theorems~\ref{thm:local-main} and \ref{thm:local-average}, respectively, we can now conclude the existence of one of these local contents directly from the existence of the other one and similarly for their average versions. However, as observed in \cite{KK12}, in the one-dimensional case, a relation between local S-content and local Minkowski content can be established more easily directly from the global relation \eqref{eq:equal-of-contents} and the fact that for $b\in\R\setminus F$ the equality $F_\eps\cap(-\infty,b]=(F\cap(-\infty,b])_\eps$ holds for $\eps>0$ sufficiently small.

Completely analogous remarks apply to the (average) local contents of the limit sets of the graph directed conformal iterated function systems studied in \cite{KK15}.

\begin{Remark}
  Note that the existence of the local Minkowski content implies the existence of the Minkowski content of a set but not vice versa, and similarly for the S-contents. Examples of sets (in $\R$) for which the Minkowski content exists but not the local Minkowski content have been discussed in \cite[Corollary~2.18]{KK12}, where some $C^{1+\alpha}$ images $F=g(K)$ of a lattice self-similar set $K$ are considered with $g$ chosen in such a way that $F$ is Minkowski measurable (while $K$ is not).
\end{Remark}

\paragraph{\bf Self-conformal sets in $\R^d$} In \cite{bohl13}, Bohl studies general self-conformal sets in $\R^d$ and establishes the existence of the average local Minkowski content as well as the existence of the average local S-content (along with the existence of averaged fractal curvature measures under additional assumptions), see also \cite{Kom11} for some earlier results in some special cases. Again, Theorem~\ref{thm:local-average} allows to derive the existence of the average local S-content directly
from the existence of the average local Minkowski content.

The existence of (non-averaged) local Minkowski- and S-content in the nonlattice case (and its nonexistence in the lattice case) seems to be open in general. Under additional geometric assumptions, Kombrink \cite{Kom11} has shown the existence of the local Minkowski content for nonlattice self-conformal sets in $\R^d$, cf.~\cite[Theorem~2.29]{Kom11}. 
Applying Theorem~\ref{thm:local-main}, we can immediately derive the existence of the local S-content under the same assumptions.

\begin{Theorem}
  Let $F\subset\R^d$ be the invariant set of a nonlattice self-conformal IFS $\{\psi_1,\ldots,\psi_N\}$ and let $D:=\dim_M F$. Assume that $F$ satisfies the open set condition for an open set $O$ such that $\bd O\subset F$ and $\udim_M(\bd O)<D$.  Assume further that there is some constant $\gamma>0$ such that the expression $\eps^{D-d-\gamma}\lambda_d(F_\eps\cap G)$ is bounded as $\eps\searrow 0$, where $G:=O\setminus \cup_i \overline{\psi_i(O)}$.
  Then the S-content $\sS^D(F)$ exists and coincides with $\sM^D(F)$. Moreover, the
  local S-content $\sigma^D(F,\cdot)$ exists and coincides with a multiple of the $D$-conformal measure $\mu_F$ on $F$. More precisely,
  $$
  \sigma^D(F,\cdot)=\sS^D(F)\cdot \mu_F=\sM^D(F)\cdot\mu_F.
  $$
\end{Theorem}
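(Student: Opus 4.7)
The plan is to obtain this theorem as an essentially immediate consequence of two inputs: Kombrink's result \cite[Theorem~2.29]{Kom11}, which supplies the existence and explicit form of the local Minkowski content under the hypotheses stated, together with Theorem~\ref{thm:local-main} of the present paper, which transports this information from the volume side to the surface side. Since $F$ is the invariant set of a contractive IFS on $\R^d$, it is compact, so the framework of Theorem~\ref{thm:local-main} applies without change.

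First, I would invoke Kombrink's theorem to record that, under the stated nonlattice and geometric assumptions (OSC with $\bd O\subset F$ and $\udim_M(\bd O)<D$, and the uniform bound on $\eps^{D-d-\gamma}\lambda_d(F_\eps\cap G)$), the set $F$ is $D$-dimensional Minkowski measurable with $0<\sM^D(F)<\infty$, and the local Minkowski content exists as a weak limit given by
$$
\mu^D(F,\cdot)=\sM^D(F)\cdot\mu_F.
$$
In particular, the finite Borel measure $\mu:=\sM^D(F)\cdot\mu_F$ is concentrated on $F$ and satisfies $\mu_r^D(F,\cdot)\wlim\mu$ as $r\se 0$.

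Next, I would apply Theorem~\ref{thm:local-main} with this choice of $\mu$ and $s=D\in[0,d)$. That theorem yields at once the equivalent weak convergence $\sigma_r^D(F,\cdot)\wlim\mu$ as $r\se 0$, hence the local S-content $\sigma^D(F,\cdot)$ exists and equals $\sM^D(F)\cdot\mu_F$. Moreover, the final clause of Theorem~\ref{thm:local-main} identifies the total mass of the limit measure simultaneously with $\sM^D(F)$ and $\sS^D(F)$, so that
$$
\sS^D(F)=\mu(F)=\sM^D(F)\cdot\mu_F(F)=\sM^D(F),
$$
using that $\mu_F$ is a probability measure on $F$. Combining these two displays gives the chain of equalities
$$
\sigma^D(F,\cdot)=\sS^D(F)\cdot\mu_F=\sM^D(F)\cdot\mu_F,
$$
which is the asserted formula.

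There is no substantial obstacle here; the argument is genuinely a two-line composition once the correct ingredients are identified. The only points worth a moment of care are: verifying that the hypotheses of Theorem~\ref{thm:local-main} are met (compactness of $F$ and $D<d$, both of which hold: $F$ is compact as the attractor of a contractive conformal IFS, and $D<d$ is forced by the geometric hypotheses together with $\bd O\subset F$), and reading off from Kombrink's theorem the normalization $\mu_F(F)=1$ in order to extract the equality $\sS^D(F)=\sM^D(F)$ rather than merely a proportionality.
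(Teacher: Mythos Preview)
Your proposal is correct and matches the paper's own approach exactly: the paper simply cites Kombrink's \cite[Theorem~2.29]{Kom11} for the existence of the local Minkowski content $\mu^D(F,\cdot)=\sM^D(F)\cdot\mu_F$ and then applies Theorem~\ref{thm:local-main} to transfer this to the local S-content. Your extra care in extracting $\sS^D(F)=\sM^D(F)$ from the total-mass clause of Theorem~\ref{thm:local-main} and the normalization $\mu_F(F)=1$ is a nice touch that the paper leaves implicit.
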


\bigskip

\paragraph{\bf Acknowledgements} 
This research was partially supported by the German Science Foundation (DFG), project no.\ WI 3264/2-2 as well as by the DFG research unit ``Geometry of spatial random systems''. I thank Jan Rataj and Martina Z\"ahle for fruitful discussions and some valuable comments on an earlier version of the manuscript.

\bibliographystyle{abbrv}
\bibliography{loc-par}

\end{document}